\newtheorem{theorem}{Theorem}[section]
\newtheorem{lemma}[theorem]{Lemma}
\newtheorem{corollary}[theorem]{Corollary}
\theoremstyle{definition}
\newtheorem{definition}[theorem]{Definition}
\newtheorem{example}[theorem]{Example}
\theoremstyle{remark}
\newtheorem{remark}[theorem]{Remark}
\numberwithin{equation}{section}
\newcommand{\N}{\mathbb{N}}
\newcommand{\R}{\mathbb{R}}
\newcommand{\Rn}{\R^n}
\newcommand{\tr}{\operatorname{tr}}
\newcommand{\sgn}{\operatorname{sgn}}
\renewcommand{\div}{\operatorname{div}}
\newcommand{\hlimsup}{\operatornamewithlimits{limsup^\ast}}
\newcommand{\hliminf}{\operatornamewithlimits{liminf_\ast}}
\begin{document}
	\title{Viscosity Solutions for Doubly-Nonlinear Evolution Equations}
	
	\author{Luca Courte}
	\address[Luca Courte]{Abteilung f\"ur Angewandte Mathematik,
		Albert-Ludwigs-Universit\"at Freiburg, Raum 228, Hermann-Herder-Straße 10, 79104 Freiburg i. Br.}
	\email{luca.courte@mathematik.uni-freiburg.de}
	\urladdr{https://aam.uni-freiburg.de/mitarb/courte/index.html}
	
	\author{Patrick Dondl}
	\address[Patrick Dondl]{Abteilung f\"ur Angewandte Mathematik,
		Albert-Ludwigs-Universit\"at Freiburg, Raum 217, Hermann-Herder-Straße 10, 79104 Freiburg i. Br.}
	\email{patrick.dondl@mathematik.uni-freiburg.de}
	\urladdr{https://aam.uni-freiburg.de/agdo/index.html}
	
	\subjclass[2010]{35D40, 35G31, 35K55, 34E15, 34A60, 34C55}
	
	\keywords{viscosity solution, partial differential inclusions, doubly nonlinear equations, vanishing viscosity limit, minimizing movement, hysteresis}
	
	\date{\today}
	
	\begin{abstract}
		We extend the theory of viscosity solutions to treat scalar-valued doubly-nonlinear evolution equations.  Such equations arise naturally in many mechanical models including a dry friction. After providing a suitable definition for discontinuous viscosity solutions in this setting, we show that Perron's construction is still available, i.e., we prove an existence result. Moreover, we will prove comparison principles and stability results for these problems. The theoretical considerations are accompanied by several examples, e.g., we prove the existence of a solution to a rate-independent level-set mean curvature flow. Finally, we discuss in detail a rate-independent ordinary differential equation stemming from a problem with non-convex energy. We show that the solution obtained by maximal minimizing movements and the solution obtained by the vanishing viscosity method coincide with the upper and lower Perron solutions and show the emergence of a rate-independent hysteresis loop.
	\end{abstract}
	
	\maketitle
	
	\section{Introduction}\label{sec:intro}
	In this article, we consider scalar-valued doubly-nonlinear evolution equations and we discuss the existence and uniqueness of solutions from a viscosity solution perspective \cite{Crandall1992, CourteDondlStefanelli}. To be precise, given an open domain $\Omega \subset \Rn$, a time interval $I = (0, T)$ with $T >0$, and a boundary condition $u_0 : \partial_P (\Omega\times I) \to \R$, where $\partial_P (\Omega\times I)$ is the parabolic boundary, i.e., $\partial_P (\Omega\times I) \coloneqq \Omega \times \{0\} \cup \partial \Omega \times I$, we are interested in the existence and properties of \emph{discontinuous viscosity solutions} $u : \Omega \times I \to \R$ to differential inclusions of the following type
	\begin{align}
	F(x, t, u, u_t, \nabla u, D^2 u) \in \mathcal{S}(u_t) G(x, t, u, \nabla u) &\text{ in } \Omega \times I, \label{eq:pde}\\
	u(\cdot, 0) = u_0 &\text{ on } \partial_P(\Omega\times I), \label{eq:pde_bdry}
	\end{align}
	where $F : \Omega \times I \times \R \times \R \times \Rn \times \operatorname{Sym}(n) \to \R$ is a possibly nonlinear function, $\operatorname{Sym}(n)$ are the symmetric $n\times n$ matrices, $G:\Omega \times I \times \R \times \Rn \to [0, \infty)$ is a positive and possibly nonlinear function, and $\mathcal{S} : \R \to \mathcal{P}(\R) \setminus \{\emptyset\}$ is a set-valued function. The symbol $\mathcal{P}(\R)$ denotes the power-set of $\R$. 
	
	Such doubly nonlinear set-valued equations arise naturally while modeling dry friction (or rate independent) mechanics. For an overview, see \cite{Mielke2015}. The prevalence of rate independent friction is due to the fact that it (as well as other, more complex dissipation models) emerges as the limit of viscous evolution in highly oscillatory (wiggly) environments \cite{Mielke2011, MR4027693}. The concept of energetic solutions derived using iterative minimization schemes has been shown to be powerful method to treat such rate independent models \cite{Francfort2006}. Classical applications include models for fracture \cite{Francfort1998} and phase transitions \cite{MielkeTheil}. More recently, the concept has been applied to damage mechanics \cite{knees_rossi_zanini_2019, Knees2019, knees_17} and plasticity \cite{Mielke2004, Mielke2017}, among many others (e.g., \cite{Mielke2007, Rossi2008, Mielke2012, Mielke2016, Knees2019}). One central issue when considering iterative minimization as a means to prove existence of solutions for rate independent systems is that a certain amount of compactness is necessary -- in all but some exceptional, linear cases, the dissipation distance must be compact on sets where the potential energy is finite. Viscosity solutions provide a means to overcome this issue, albeit at the expense that only scalar problems can be treated. 
	
	The concept of discontinuous viscosity solutions \cite{Barles1987, Bardi1997, Crandall1992, Barles2013} turns out to be a good setting for  the study of \eqref{eq:pde}, as potential time discontinuities, i.e., jumps, are one of the main features of doubly-nonlinear evolution equations.  The proofs of comparison principles and existence of solutions follow the classical theory and are therefore relegated in Appendix \ref{app:proof}. Instead, we focus on the usefulness of this concept by applying the results to several examples relevant in mechanics. In particular, in section \ref{sec:per_sol}, we consider an ordinary differential inclusion and show how the Perron solutions relate to the solutions obtained by the vanishing viscosity limit and the minimizing movements procedure (see, e.g., \cite{Efendiev_Mielke, MR3021776}). The extension of the theory of viscosity solutions to this novel setting and the relationship between the Perron solutions, the vanishing viscosity solution, and the maximal minimizing movement solution are the main contributions of this article.
	
	The remainder of this article is organized as follows. In section \ref{sec:disc_vs}, we extend the notion of discontinuous viscosity solutions to equations of type \eqref{eq:pde} and present applications of the theory. As the proofs are similar to the case of (discontinuous) viscosity solutions in the sense of \cite{Crandall1992} they have been moved to Appendix \ref{app:proof} and the main focus in that section lies in the application of the results. With the help of Perron's construction \cite{Ishii1987}, we prove in subsection \ref{ssec:existence} an existence result for \eqref{eq:pde} under some mild continuity conditions on $G$ and $\mathcal{S}$. Contrary to the references mentioned in the beginning, this allows us to easily prove existence of solutions to equations, even if $-\mathcal{S}$ is not maximal monotone. Indeed, we highlight this by proving existence of solutions to the equation $$\mathcal{F}(u_t) - \Delta u \ni f(x, t),$$
	where $\mathcal{F} : \R \to \mathcal{P}(\R)$ models a combination of viscous friction, dry friction, and static friction (i.e., a variation of ``sticktion'' \cite{Visintin1994}), e.g., $\mathcal{F}(0) = [-2, 2]$ and $\mathcal{F}(a) = \{\sgn(a) + a\}$, whenever $a\not= 0$, see Example \ref{ex:fric}.

	In subsection \ref{ssec:comp}, we prove that equation \eqref{eq:pde} satisfies a strict comparison principle under some monotonicity assumptions, i.e., we can compare a strict subsolution with a supersolution and a subsolution with strict supersolution. We illustrate the usefulness of this in two examples. First, we will show the existence and uniqueness of a solution to a doubly nonlinear evolution equation with a degenerate elliptic operator, see Example \ref{ex:nonlinear}. Second, in Example \ref{ex:mcf}, we prove that there exists a family of level-set functions $u(\cdot,t) : \R^n \to \R$ satisfying
	\begin{align*}
	|\nabla u| \partial R(u_t) - |\nabla u|  \operatorname{div}(\tfrac{\nabla u}{|\nabla u|}) \ni f \text{ on } \{u(\cdot, t) = 0\},
	\end{align*}
	i.e., the set $\{u=0\}$ is a solution to a rate-independent level-set mean curvature flow if we assume $\partial R$ to be the subdifferential of the absolute value, \begin{equation} \label{eq:R}
	\partial R(a) \coloneqq \left\{ \begin{array}{ll} \lbrack-1, 1\rbrack &\text{ if } a = 0, \\ \sgn(a) &\text{ elsewhere.} \end{array} \right.
	\end{equation}
	Note that these techniques also work if we replace $\partial R$ by $\mathcal{F}$ as in Example \ref{ex:fric} or other (continuous) multi-valued maps. This provides a theory that extends beyond the advancements made in \cite{Visintin1990}, as we can treat problems with mixed dynamics.
	
	Finally, we devote section \ref{sec:per_sol} to study how discontinuous viscosity solutions of the rate-independent ordinary differential equation 
	\begin{align*}
	\partial R(u_t) + e(u) &\ni f,
	\end{align*}
	relate to other solution concepts when $\partial R$ is given by \eqref{eq:R}, $e : \R \to \R$ is non-monotone, and $f : \R \to \R$ is a given loading. We also explain how a rate-independent hysteresis loop emerges if $f$ is a periodic loading, see subsection \ref{ssec:hysteresis}. Let us briefly present the results obtained in this section. Given a subsolution $u$ and a supersolution $v$, the Perron solutions are the following two functions
	\begin{align}
	\label{eq:greater_perron}U(x, t) &\coloneqq \sup\{ \tilde{u}(x, t) \;|\; \tilde{u} \text{ is a subsolution with } u \le \tilde{u} \le v \}, \\
	\label{eq:smaller_perron}V(x, t) &\coloneqq \inf\{ \tilde{v}(x, t) \;|\; \tilde{v} \text{ is a supersolution with } u \le \tilde{v} \le v \},
	\end{align}
	and it turns out that they are discontinuous viscosity solutions, see Theorem \ref{thm:perron}. Assume $f$ is increasing, then the smaller Perron solution \eqref{eq:smaller_perron} corresponds to the solution that arises during a vanishing viscosity limit. On the other hand, it turns out that the greater Perron solution \eqref{eq:greater_perron} can be constructed using a maximal minimizing movements sequence, i.e., it is an energetic solution. As any other discontinuous viscosity solution lies between $V$ and $U$, this shows that this theory provides solutions that capture the behavior of any (meaningful) evolution to this equation. We conjecture that this link persists even in the case of the partial differential equation \eqref{eq:pde} if all the involved solution concepts are meaningful.
				
	\section{Discontinuous Viscosity Solutions}\label{sec:disc_vs}
	For the convenience of the reader, we start by providing the basic concepts and notations required to state the definition of discontinuous viscosity solutions. Let $\Omega \subset \Rn$, we write $USC(\Omega)$ for the set of all upper-semicontinuous functions from $\Omega$ to $\R$. Likewise, $LSC(\Omega)$ contains all lower-semicontinuous functions from $\Omega$ to $\R$. For upper- and lower-semicontinuous functions, one can introduce the notion of super- and sub-jets respectively \cite[Section~8]{Crandall1992}.
	
	\begin{definition}[Parabolic Jets]
		Let $\Omega \subset \Rn$ be an open domain, $T > 0$, $I \coloneqq (0, T)$, and $u \in USC(\Omega \times I)$. The second order parabolic super-jet $\mathcal{P}^{2, +}u(\hat x, \hat t)$ at $(\hat x, \hat t) \in \Omega \times I$ contains all triplets $(a, p, X) \in \R \times \Rn \times \operatorname{Sym}(n)$ such that
		\[
			u(x, t) \le u(\hat x, \hat t) + a(t-\hat t) + \left<p, x - \hat x\right> + \left<X(x-\hat x), x-\hat x\right> + {\rm o}(|t-\hat t| + |x-\hat x|^2).
		\]
		Moreover, the second order parabolic sub-jet of a lower-semicontinuous function $v \in LSC(\Omega \times I)$ is defined as $\mathcal{P}^{2, -} v(\hat y, \hat s) \coloneqq - \mathcal{P}^{2, +} (-v(\hat y, \hat s))$ for all $(y, s) \in \Omega \times I$, i.e., it contains all triplets $(b, q, Y) \in \R \times \Rn \times \operatorname{Sym}(n)$ such that
		\[
		v(y, s) \ge v(\hat y, \hat s) + b(s-\hat s) + \left<q, y - \hat y\right> + \left<Y(y-\hat y), y-\hat y\right> + {\rm o}(|s-\hat s| + |y-\hat y|^2).
		\]
	\end{definition}
		
	\begin{definition}[Semicontinuous Envelopes]
		Let $\Omega \subset \Rn$ and $f : \Omega \to \R$ a function, we call the function
		\[
			f^*(x) \coloneqq \inf\{ g \in USC(\Omega) \;|\; g \ge f \}
		\]
		the upper-semicontinuous envelope of $f$. Analogously,
		\[
			f_*(x) \coloneqq \sup\{ g \in LSC(\Omega) \;|\; g \le f \}
		\]
		is the lower-semicontinuous envelope of $f$.
	\end{definition}

	Now that we have all the ingredients in hand, we can define discontinuous viscosity solutions in our novel setting. The following definition extends the definition in \cite{CourteDondlStefanelli}. For a detailed discussion of viscosity solutions, we refer to \cite{Barles1987, Bardi1997, Crandall1992, Barles2013}.

	\begin{definition}[Discontinuous Viscosity Solutions]\label{def:dis_vc}
		Let $\Omega \subset \Rn$ be an open domain and $I\coloneqq (0, T)$ a finite time interval. We call an upper-semicontinuous function $u \in USC(\Omega \times I)$ a viscosity subsolution of \eqref{eq:pde} if for every $(x, t) \in \Omega \times I$ and every $(a, p, X) \in \mathcal{P}^{2, +} u(x, t)$ and some $\mu \in \mathcal{S}(a)$ we have
		\begin{equation}\label{eq:def_subsol}
			F_*(x, t, u(x, t), a, p, X) \le \mu G(x, t, u(x, t), p).
		\end{equation}
		Moreover, a lower-semicontinuous function $v \in LSC(\Omega \times I)$ is called a viscosity supersolution of \eqref{eq:pde} if for every $(x, t) \in \Omega \times I$ and every $(b, q, Y) \in \mathcal{P}^{2, -} v(x, t)$ and some $\nu \in \mathcal{S}(b)$ we have
		\begin{equation}\label{eq:def_supersol}
			F^*(x, t, v(x, t), b, q, Y) \ge \nu G(x, t, v(x, t), q).
		\end{equation}
		Finally, a function $u$ whose upper-semicontinuous envelope $u^*$ is a viscosity subsolution and whose lower-semicontinuous envelope $u_*$ is a viscosity supersolution, is called a discontinuous viscosity solution. If a discontinuous viscosity solution $u$ is continuous, i.e., $u^* = u_* = u$ then $u$ is simply called viscosity solution.
	\end{definition}

\begin{remark}
	We only obtain a meaningful solution concept if $F$ is degenerate elliptic, i.e., $F(x, t, r, a, p, Y) \le F(x, t, r, a, p, X)$ if $X\le Y$ for all $(x, t) \in \Omega\times I$, $r, a \in \R$, $p \in \Rn$ and $X \in \operatorname{Sym}(n)$ and $G$ continuous.
\end{remark}

\subsection{Existence}\label{ssec:existence}
We will employ Perron's construction (see \cite{Ishii1987}) to prove the existence of discontinuous viscosity solutions of equation \eqref{eq:pde}. Perron's method is based on the (semi-)continuity of the equation. Due to the definition of discontinuous sub- and supersolutions, no further assumptions on $F$ are necessary here. Furthermore, we assume that $G$ is continuous and that $\mathcal{S}$ satisfies the following conditions:
\begin{enumerate}[label=C\arabic*)]
	\item\label{it:C_seq} If $a_n \to a$ then any sequence $\mu_n \in \mathcal{S}(a_n)$ has a subsequence $\mu_{n_k}$ such that $\mu_{n_k} \to \mu \in \mathcal{S}(a)$.
	\item\label{it:C_epsDelta} For all $a\in \R$ and for each $\epsilon > 0$ there is a $\delta > 0$ such that for all $b\in\R$ with $|a-b| < \delta$ there are $\mu \in \mathcal{S}(a)$ and $\nu \in \mathcal{S}(b)$ with $|\mu-\nu| <\epsilon$,
	\item\label{it:C_compact} For all $a\in \R$ the set $\mathcal{S}(a)$ is compact.
\end{enumerate}
In \cite{CourteDondlStefanelli} it was already noted that if $-\mathcal{S}$ is maximal monotone, these conditions are satisfied. The arguments required to prove the following result are an adaptation of the theory from \cite[22-24]{Crandall1992}, \cite[302-305]{Bardi1997}.

\begin{theorem}[Perron's Method]\label{thm:perron}
	Let $\Omega \subset \Rn$, $I \coloneqq (0, T)$, $T > 0$. Moreover, let $u$ be a subsolution and $v$ be a supersolution of \eqref{eq:pde} with $u \le v$ on $\Omega \times I$. Then the functions
	\begin{align*}
	U(x, t) &\coloneqq \sup\{ \tilde{u}(x, t) \;|\; \tilde{u} \text{ is a subsolution with } u \le \tilde{u} \le v \} \\
	V(x, t) &\coloneqq \inf\{ \tilde{v}(x, t) \;|\; \tilde{v} \text{ is a supersolution with } u \le \tilde{v} \le v \}
	\end{align*}
	are discontinuous viscosity solutions of \eqref{eq:pde}. Moreover, if $u$ and $v$ are continuous, it holds $V^* \le U$, $V \le U_*$, and in particular
	\[
		u \le V \le U \le v ~~\text{ on $\Omega \times I$.}
	\]
\end{theorem}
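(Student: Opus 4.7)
The plan is to follow the classical Perron construction \cite{Ishii1987, Crandall1992}, adapted to the set-valued term $\mathcal{S}(u_t)$ via conditions \ref{it:C_seq}--\ref{it:C_compact}. By symmetry it suffices to treat $U$; the argument for $V$ is dual. \emph{Step 1 (subsolution property of $U^*$).} Given $(\hat x, \hat t) \in \Omega \times I$ and $(a,p,X) \in \mathcal{P}^{2,+} U^*(\hat x, \hat t)$, the standard envelope approximation produces admissible subsolutions $u_n$ and points $(x_n, t_n) \to (\hat x, \hat t)$ with $u_n(x_n, t_n) \to U^*(\hat x, \hat t)$ and jets $(a_n, p_n, X_n) \in \mathcal{P}^{2,+} u_n(x_n, t_n)$ converging to $(a, p, X)$. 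Each $u_n$ provides $\mu_n \in \mathcal{S}(a_n)$ satisfying the subsolution inequality; condition \ref{it:C_seq} extracts a subsequence with $\mu_n \to \mu \in \mathcal{S}(a)$, and the lower semicontinuity of $F_*$ together with continuity of $G$ transfers the inequality to $(\hat x, \hat t)$.

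\emph{Step 2 (supersolution property of $U_*$), the main obstacle.} I would argue by contradiction. Suppose $U_*$ fails the supersolution test at some $(x_0, t_0)$: there exists $(b, q, Y) \in \mathcal{P}^{2,-} U_*(x_0, t_0)$ with
\[
F^*(x_0, t_0, U_*(x_0, t_0), b, q, Y) < \nu \, G(x_0, t_0, U_*(x_0, t_0), q) \quad \text{for every } \nu \in \mathcal{S}(b).
\]
By \ref{it:C_compact} the gap can be made uniform in $\nu$, and by \ref{it:C_seq}--\ref{it:C_epsDelta} together with continuity of $G$ and semicontinuity of $F^*$ the strict inequality persists in a neighborhood: for every $(x, t, r, a, p, X)$ sufficiently close to $(x_0, t_0, U_*(x_0, t_0), b, q, Y)$ there is a selection $\mu \in \mathcal{S}(a)$ witnessing it. I would then introduce the quadratic bump
\[
w_{\delta,\gamma}(x, t) := U_*(x_0, t_0) + b(t-t_0) + \langle q, x-x_0\rangle + \tfrac{1}{2}\langle Y(x-x_0), x-x_0\rangle + \delta - \gamma\bigl(|x-x_0|^2 + |t-t_0|^2\bigr),
\]
which for suitably small $\gamma,\delta > 0$ is a classical strict subsolution on some ball $B_r(x_0, t_0)$. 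Defining $\tilde u := \max(U, w_{\delta,\gamma})$ on $B_r$ and $\tilde u := U$ otherwise yields an admissible subsolution (the pasting is valid because the $\gamma$-term ensures $w_{\delta,\gamma} < U$ on the annulus near $\partial B_r$, using that $U \ge U_*$ and $U_*$ is lower semicontinuous). One verifies $u \le \tilde u \le v$ by choosing $\delta$ small, using $U_*(x_0, t_0) < v(x_0, t_0)$ (which must hold, as otherwise $v$ would inherit the failed supersolution test from $U_*$), and one obtains $\tilde u > U$ at some point by picking $(x_k, t_k) \to (x_0, t_0)$ with $U(x_k, t_k) \to U_*(x_0, t_0)$ and evaluating $w_{\delta,\gamma}$ there. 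This contradicts the maximality of $U$.

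\emph{Step 3 (ordering under continuity).} Since $u$ is a continuous subsolution with $u \le u \le v$, $u$ itself is admissible in the supremum defining $U$, so $u \le U$; dually $V \le v$, and any competitor $\tilde v$ for $V$ satisfies $\tilde v \ge u$, giving $V \ge u$. Knowing now that $U$ is a discontinuous viscosity solution, $U_*$ is a supersolution and $u = u_* \le U_* \le U \le v$ (the last inequality using that $v$ is usc), so $U_*$ is admissible in the infimum defining $V$, whence $V \le U_*$. Symmetrically, $V^*$ is a subsolution with $u = u^* \le V^* \le v^* = v$, so $V^* \le U$. Combining these with the trivial envelope bounds $V \le V^*$ and $U_* \le U$ yields $u \le V \le U \le v$, completing the proof.
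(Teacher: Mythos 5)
Your proposal is correct and follows essentially the same approach as the paper: the paper factors the argument into a lemma that the usc envelope of a supremum of subsolutions is a subsolution (your Step 1), and a "bump" lemma that perturbs a failed supersolution test into a strictly larger local subsolution (your Step 2), then combines them and derives the ordering exactly as in your Step 3. Your Step 2 in fact supplies a bit more detail than the paper's terse statement that "there has to be a neighborhood where $U_* < v$" — your observation that equality at $(x_0,t_0)$ would transfer the violated subjet from $U_*$ to $v$ and contradict that $v$ is a supersolution is the correct justification, so this is a welcome clarification rather than a deviation.
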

\begin{proof}
	This theorem is proved in \ref{proof:perron}.
\end{proof}

\begin{remark}
	As soon as comparison holds for the equation, it follows that $U^* \le U_*$ and $V^* \le V_*$, i.e. $U$ and $V$ are continuous. Moreover, we obtain $U = V$ if $u = v$ on the parabolic boundary.
\end{remark}

\begin{example}[``Sticktion'']\label{ex:fric}
	Let $\Omega \subset \Rn$ be an open domain and $I =(0, T)$ a finite time interval. We consider the following differential inclusion
	\begin{align*}
		\mathcal{F}(u_t) - \Delta u &\ni f  \text{ in } \Omega \times I, \\
		u(\cdot, 0) &= 0 \text{ on } \Omega,
	\end{align*}
	where $\mathcal{F} : \R \to \mathcal{P}(\R)$ is given by $\mathcal{F}(0) \coloneqq [-2, 2]$ and $\mathcal{F}(a) \coloneqq \{\sgn(a) + a\}$, whenever $a\not= 0$. This set-valued function can be seen as a model for a dissipation that combines viscous friction, dry friction, and static friction. Moreover, the right-hand side $f \in C(\Omega \times I)$ is assumed to be a bounded continuous loading.
	
	In order to employ Perron's construction, we have to verify that $\mathcal{F}$ satisfies the required conditions. First of all, by definition $\mathcal{F}$ satisfies \ref{it:C_compact}. To simplify notation, define $F : \R \setminus \{0\} \to \R$, $F(a) \coloneqq \sgn(a) + a$. If $a\not=0$ then $\mu \in \mathcal{F}(a)$ if and only if $\mu = F(a)$. Now, take any sequence $a_n \to a$ and any sequence $\mu_n \in \mathcal{F}(a_n)$. Due to the boundedness of $a_n$, the sequence $\mu_n$ is bounded and we can extract a converging subsequence. If $a \not= 0$ then we are at a point of continuity of $F$ and hence the subsequence converges to $F(a)$, i.e., to an element of $\mathcal{F}(a)$. Now assume that $a=0$, then we can pass to a further subsequence such that either all elements $a_n = 0$ or all $a_n \not= 0$. In the first case, the compactness of $\mathcal{F}(0)$ shows that a subsequence of $\mu_n$ converges to an element of $\mathcal{F}(0)$. In the second case, we pass to a further subsequence such that $a_n > 0$ or $a_n < 0$. In either case, we can look at the right-continuous or left-continuous extension of $F$ and see that $\mu_n \to \mu \in \mathcal{F}(0)$. This finishes the proof that $\mathcal{F}$ satisfies \ref{it:C_seq}. With a similar argument one can also show that $\mathcal{F}$ satisfies \ref{it:C_epsDelta}.
	
	Hence, we showed that all the prerequisites to apply Theorem \ref{thm:perron} are fulfilled. The missing ingredients are the existence of a sub- and a supersolution that satisfy the Dirichlet boundary condition in a strong sense. Let $\underline{u}(x, t) \coloneqq -||f||_\infty t$, then for all $(x, t) \in \Omega \times I$ it holds
	\[
		\mathcal{F}(\underline{u}_t(x, t)) - \Delta \underline{u}(x, t) \ni -1 - ||f||_\infty \le f(x, t).
	\]
	Therefore, $\underline{u}$ is a subsolution and we can prove analogously that $\overline{u} \coloneqq -\underline{u}$ is a supersolution. Moreover, it holds that $\underline{u} \le \overline{u}$ and $\underline{u}(\cdot, 0) = \overline{u}(\cdot, 0) = 0$. Hence, we can construct a discontinuous viscosity solution using Perron's method.
\end{example}

\subsection{Comparison Principles}\label{ssec:comp}

In the previous subsection, we saw how Perron's construction can be used to construct discontinuous viscosity solutions. Unfortunately, in general, one cannot hope to find a unique solution to equation \eqref{eq:pde}. Hence, we cannot hope that \eqref{eq:pde} fulfills a strong comparison principle. Despite this, it turns out that under some mild monotonicity assumptions, we can still prove a comparison principle with a strict subsolution and a supersolution or a subsolution and a strict supersolution. This strict comparison principle allows us to study the behavior of any solution to the equation. This turns out to be particularly useful when studying the Perron solution in section \ref{sec:per_sol}.  Note, that we will provide a version of the comparison principle that is suitable to treat geometric singularities as they arise in the mean curvature flow.

As already mentioned we have to make some additional assumptions on $F$, $G$ and $\mathcal{S}$. Indeed, we assume that the function $G :  \Omega \times I \times \R \times \Rn \to [0, \infty)$ satisfies
\begin{enumerate}[label=G\arabic*)]
	\item\label{it:G_Lipschitz} There is a constant $L_G > 0$ such that
	$\left|G(x, t, r, p)-G(x, t, s, p)\right| \le L_G |r-s|$ for all $r, s \in \R$, $(x, t) \in \Omega \times I$ and $p \in \Rn$,
	\item\label{it:G_modulus} There exists a modulus of continuity $\omega_G$ such that for all $\alpha > 0$ big enough, it holds
	\begin{align*}
	&G(x, t, r, 4\alpha |x-y|^2 (x-y)) - G(y, t, r, 4\alpha |x-y|^2 (x-y)) &&\\&\le \omega_G(|x-y| + \alpha |x-y|^4),
	\end{align*}
	whenever $x, y \in \Omega, t \in I, r\in \R$.
\end{enumerate}
The set-valued function $\mathcal{S} :\R \to \mathcal{P}(\R) \setminus \{\emptyset\}$ is decreasing and bounded, i.e.,
\begin{enumerate}[label=S\arabic*)]
	\item\label{it:S_mon} $\sup S(a) \le \inf S(b)$ for all $a, b \in \R$ with $a > b$,
	\item\label{it:S_bdd} It holds $\mathcal{S}_\mathrm{max} \coloneqq \sup \left\{\bigcup_{a\in \R} \bigcup_{s\in\mathcal{S}(a)} |s|\right\} < \infty$ .
\end{enumerate}
Moreover, $F : \Omega \times I \times \R \times \R \times \Rn \times \mathrm{Sym}(n) \to \R$ satisfies
\begin{enumerate}[label=F\arabic*)]
	\item\label{it:F_inc}$\gamma(r-s) \le F_*(x, t, r, a, p, X) - F_*(x, t, s, a, p, X)$ for all $r, s \in \R$ with $r \ge s$ and $\gamma \coloneqq \mathcal{S}_{\textrm{max}}L_G$ with $L_G$ from \ref{it:G_Lipschitz} and $\mathcal{S}_{\mathrm{max}}$ from \ref{it:S_bdd},
		\item\label{it:F_modulus} There exists a modulus of continuity $\omega_F$ such that
	\begin{align*}
	F^*(y, t, r, b, p, Y) - F_*(x, t, r, a, p, X) \le \omega_F(|x-y| + \alpha |x-y|^4),
	\end{align*}
	whenever $x, y \in \Omega, t \in I, r, a \in \R, \alpha \in \R_{\ge0}$, $p \coloneqq 4\alpha |x-y|^2 (x-y)$, $b \le a$, and 
	\[
	-4||Z|| \left(\begin{array}{cc}\operatorname{Id} & 0 \\ 0 & \operatorname{Id}\end{array}\right) \le \left(\begin{array}{cc}X & 0 \\ 0 & -Y\end{array}\right)\le \left(\begin{array}{cc}Z+\frac{1}{2||Z||}Z^2 & -(Z+\frac{1}{2||Z||}Z^2) \\ -(Z+\frac{1}{2||Z||}Z^2 ) & Z+\frac{1}{2||Z||}Z^2 \end{array}\right),\]
	with $Z \coloneqq 4 \alpha |x-y|^2 \operatorname{Id} + 8\alpha (x-y) \otimes (x-y)$. In particular, the last inequality implies that $X \le Y$ and $||X||, ||Y|| \le C |x-y|^2$.
\end{enumerate}

\begin{theorem}[Strict Comparison Principle on Bounded Domains]\label{thm:comp}
	Let $\Omega \subset\Rn$ be a bounded, open domain and $I \coloneqq (0, T)$  a finite time interval. Moreover, let $u$ be a strict viscosity subsolution, i.e., there exists $\lambda > 0$ such that for every $(x, t) \in \Omega \times I$ and every $(a, p, X) \in \mathcal{P}^{2, +} u(x, t)$ and some $\mu \in \mathcal{S}(a)$ we have
	   \begin{equation}
	   F_*(x, t, u(x, t), a, p, X) \le \mu G(x, t, u(x, t), p) - \lambda,
	   \end{equation}
	and $v$ be a viscosity supersolution of \eqref{eq:pde} with $u \le v$ on the parabolic boundary $\partial_P (\Omega \times I)$, i.e., on $ \partial\Omega \times I \cup \Omega \times \{0\}$, then $u \le v$ in $\Omega \times I$.
	
	The same result holds if $u$ is merely a subsolution and $v$ is a strict supersolution.
\end{theorem}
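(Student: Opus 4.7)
The plan is the standard doubling-of-variables argument adapted to our set-valued setting, with the monotonicity assumption \ref{it:S_mon} playing the crucial role in picking $\mu, \nu$ compatibly. I will handle the case where $u$ is a strict subsolution; the case of a strict supersolution is symmetric.

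First I suppose for contradiction that $\sup_{\Omega\times I}(u-v) > 0$, and introduce the penalized functional
\[
\Phi_\alpha(x,y,t,s) \coloneqq u(x,t) - v(y,s) - \alpha|x-y|^4 - \tfrac{1}{2\epsilon}(t-s)^2 - \tfrac{\eta}{T-t},
\]
for parameters $\alpha \gg 1$, $\epsilon,\eta$ small. Using upper/lower semicontinuity, boundedness of $\Omega$, and the fact that $u\le v$ on the parabolic boundary, I would argue in the usual way that for all sufficiently small $\eta$, the supremum of $\Phi_\alpha$ is positive and is attained at a point $(\hat x,\hat y,\hat t,\hat s)$ lying in $\Omega\times\Omega\times(0,T)\times(0,T)$, with $\alpha|\hat x-\hat y|^4 \to 0$ and $(\hat t-\hat s)/\epsilon$ bounded, and moreover $u(\hat x,\hat t) > v(\hat y,\hat s)$ with $u(\hat x,\hat t) - v(\hat y,\hat s)$ bounded below by a positive constant.

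Next I apply the parabolic Crandall--Ishii maximum principle for semicontinuous functions to obtain $(a,p,X)\in\mathcal{P}^{2,+}u(\hat x,\hat t)$ and $(b,q,Y)\in\mathcal{P}^{2,-}v(\hat y,\hat s)$ with $p=q=4\alpha|\hat x-\hat y|^2(\hat x-\hat y)$, the matrix inequality appearing in \ref{it:F_modulus} for $X,Y$, and the crucial time relation
\[
a - b = \tfrac{\eta}{(T-\hat t)^2} > 0.
\]
The strict subsolution inequality gives some $\mu\in\mathcal{S}(a)$ with $F_*(\hat x,\hat t,u(\hat x,\hat t),a,p,X) \le \mu G(\hat x,\hat t,u(\hat x,\hat t),p) - \lambda$, and the supersolution inequality gives some $\nu\in\mathcal{S}(b)$ with $F^*(\hat y,\hat s,v(\hat y,\hat s),b,q,Y) \ge \nu G(\hat y,\hat s,v(\hat y,\hat s),q)$. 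The key point is now that $a > b$, so \ref{it:S_mon} yields $\mu \le \nu$, and together with $G\ge 0$ and \ref{it:S_bdd} the combination
\[
\nu G(\hat y,\hat s,v(\hat y,\hat s),q) - \mu G(\hat x,\hat t,u(\hat x,\hat t),p)
\]
can be rewritten as $\nu\bigl[G(\hat y,\hat s,v(\hat y,\hat s),q) - G(\hat x,\hat t,u(\hat x,\hat t),p)\bigr] + (\nu-\mu)G(\hat x,\hat t,u(\hat x,\hat t),p)$ and estimated from below using \ref{it:G_Lipschitz} and \ref{it:G_modulus} by $-\mathcal{S}_{\max}L_G\,(u(\hat x,\hat t)-v(\hat y,\hat s)) - \mathcal{S}_{\max}\,\omega_G(|\hat x-\hat y|+\alpha|\hat x-\hat y|^4)$, dropping the nonnegative term $(\nu-\mu)G$.

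To close, I subtract the two jet inequalities and bound the left-hand side above via \ref{it:F_modulus} and \ref{it:F_inc}: using $u(\hat x,\hat t)\ge v(\hat y,\hat s)$,
\[
F^*(\hat y,\hat s,v(\hat y,\hat s),b,q,Y) - F_*(\hat x,\hat t,u(\hat x,\hat t),a,p,X) \le \omega_F(|\hat x-\hat y| + \alpha|\hat x-\hat y|^4) - \gamma\bigl(u(\hat x,\hat t)-v(\hat y,\hat s)\bigr),
\]
where $\gamma = \mathcal{S}_{\max}L_G$. Combining with the lower bound obtained above produces an inequality of the form
\[
\lambda \le \omega_F + \mathcal{S}_{\max}\omega_G + \gamma(u(\hat x,\hat t)-v(\hat y,\hat s)) - \gamma(u(\hat x,\hat t)-v(\hat y,\hat s)),
\]
so that after sending $\alpha\to\infty$ (so $|\hat x-\hat y|\to 0$ and $\alpha|\hat x-\hat y|^4\to 0$), then $\epsilon\to 0$ and $\eta\to 0$, the moduli vanish and one obtains $\lambda \le 0$, contradicting $\lambda > 0$.

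The main obstacle I expect is the bookkeeping around the set-valued selection: one must verify carefully that the choices of $\mu\in\mathcal{S}(a)$ and $\nu\in\mathcal{S}(b)$ provided by the defining inequalities are indeed the ones on which \ref{it:S_mon} operates, and that the cancellation $\gamma(u-v)-\gamma(u-v)$ is engineered by the precise choice $\gamma=\mathcal{S}_{\max}L_G$ in \ref{it:F_inc}. The rest of the doubling/Ishii--Crandall machinery is routine, but the definition of $\gamma$ in \ref{it:F_inc} is tailored precisely so that the Lipschitz defect in $G$ (times $\mathcal{S}_{\max}$) is absorbed by the strict monotonicity of $F_*$ in $r$.
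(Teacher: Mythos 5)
Your overall strategy matches the paper's: doubling of variables, Crandall--Ishii maximum principle, subtract the sub- and supersolution inequalities, and use \ref{it:F_inc}, \ref{it:F_modulus}, \ref{it:G_Lipschitz}, \ref{it:G_modulus}, \ref{it:S_mon}, \ref{it:S_bdd} to produce a contradiction with $\lambda > 0$. You also correctly identify the crucial cancellation engineered by the specific choice $\gamma = \mathcal{S}_{\max}L_G$, and the role of $a > b$ together with \ref{it:S_mon} in obtaining $\mu \le \nu$.

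The one substantive deviation is the \emph{doubling in time}. You introduce a second time variable $s$ and a penalty $\tfrac{1}{2\epsilon}(t-s)^2$, whereas the paper's penalized functional
\[
M_{\alpha,\gamma} = \sup_{x,y\in\Omega,\;t\in I}\Bigl\{ u(x,t) - v(y,t) - \alpha|x-y|^4 - \tfrac{\gamma}{T-t}\Bigr\}
\]
keeps a \emph{shared} time variable. This is not a cosmetic choice: the parabolic Crandall--Ishii maximum principle (Theorem 8.3 in Crandall--Ishii--Lions) that you invoke is stated precisely for a shared time variable and directly delivers parabolic jets $(a,p,X)\in\mathcal{P}^{2,+}u(\hat x,\hat t)$ and $(b,p,Y)\in\mathcal{P}^{2,-}v(\hat y,\hat t)$ with $a-b=\partial_t(\tfrac{\gamma}{T-t})$ and the matrix inequality in \ref{it:F_modulus}, independent of any extra small parameter. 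With your time-doubled $\Phi_\alpha$ that theorem does not apply as written: you would instead need the \emph{elliptic} Jensen--Ishii lemma on the space-time variables $(x,t)$ and $(y,s)$, and then (i) convert the resulting elliptic jets into parabolic jets (possible, but requires an argument), and (ii) check that the matrix inequality you get can still be fed into \ref{it:F_modulus} despite the fact that $\|D^2\Phi_\alpha\|\sim 1/\epsilon$ blows up; one has to exploit the block-diagonal structure of the penalty's Hessian to isolate the $\epsilon$-independent spatial part. None of this is addressed in your sketch, and it is exactly the difficulty the paper sidesteps by not doubling time. If you keep a single time variable, your argument reduces to the paper's proof; as written, the invocation of the parabolic Crandall--Ishii lemma for $\Phi_\alpha(x,y,t,s)$ is the one step that would not survive scrutiny without additional work.
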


\begin{proof}
	This theorem is proved in \ref{proof:comp}.
\end{proof}

\begin{remark}\label{rmk:comp}
	If the partial differential inclusion is well-behaved, the strict comparison principle can be extended to a comparison principle, i.e., we can compare any subsolution with any supersolution. For instance, consider the equation
	\[
		u_t + F(x, t, u, \nabla u, D^2 u) \in \mathcal{S}(u_t) G(x, t, u, \nabla u),
	\]
	such that \ref{it:F_modulus}, \ref{it:G_Lipschitz}, \ref{it:G_modulus}, \ref{it:S_bdd}, and \ref{it:S_mon} hold, and $F$ is either increasing or Lipschitz in the third variable. Then an exponential rescaling, i.e., $U(x, t) \coloneqq e^{-\lambda t} u(x, t)$ can be used to cast the equation and any subsolution and any supersolution in a form such that Theorem \ref{thm:comp} is applicable.
	
	Note that whenever comparison holds for any subsolution and any supersolution, the Perron solutions from Theorem \ref{thm:perron} coincide. 
\end{remark}

\begin{example}[Equation with degenerate elliptic operator]\label{ex:nonlinear}
	Now that we have proved a comparison principle, we can present the existence theory we developed in full glory. For this, we give an example of a doubly-nonlinear equation with a degenerate elliptic operator.
	
	Assume that $\Omega \coloneqq B_1(0) \subset \R^2$, and $I = (0, T)$ a finite time-interval and consider the following equation
	\begin{align*}
	\partial R(u_t) + \max\{- \Delta u, 0 \} + u^3 \ni f &, \text{ in } \Omega \times [0, T) \\
	u = 0 &, \text{ on }  \partial_P(\Omega \times I) ,
	\end{align*}
	with $f \in C^0(\Omega \times [0, T])$ and $\partial R$ the subdifferential of the absolute value, see \eqref{eq:R}.
	
	Moreover, we assume that there is a continuous, increasing function $m : I \to [0, \infty)$ with $m(0) = 0$ such that 
	\[
	|f(x, t)| \le 1 + \left(m(t)(1-|x|^2)\right)^3.
	\]
	
Let $\underline{u}(x, t) \coloneqq m(t)(|x|^2-1)$ and $\overline{u}(x, t) \coloneqq m(t)(1-|x|^2)$, then it holds
\begin{align*}
\partial R(\underline{u}_t) + \max\{ -\Delta \underline{u}, 0 \}+ \underline{u}^3 \ni -1 + 0 - \left(m(t)(1-|x|^2)\right)^3
 \le f(x, t)
\end{align*}
and likewise
\[
\partial R(\overline{u}_t) + \max\{ -\Delta \overline{u}, 0 \}+ \overline{u}^3 \ge  f(x, t).
\]

Hence, $\underline{u}$ and $\overline{u}$ are viscosity sub- and supersolutions with $\underline{u} = \overline{u} = 0$ on the parabolic boundary. Perron's method, Theorem \ref{thm:perron}, now allows to construct a viscosity solution to the problem. The uniqueness follows by comparison (see Theorem \ref{thm:comp} and Remark \ref{rmk:comp}), as the equation is strictly monotone in $u$.
\end{example}

We can also extend the strict comparison principle on the whole domain $\Rn$. To be able to prove such a result, the functions $F$ and $G$ have to satisfy stronger conditions. To be precise, we use the following conditions:
\begin{enumerate}[label=FU)]
	\item\label{it:F_growth} The function $F$ can be written as $F(x, t, r, a, p, X) = F_1(x, t, r, a) + F_2(t, p, X)$ with $F_1$ continuous. Moreover, there are $C_{F_1} > 0$, $K_F > 0$, and moduli of continuity $\omega_{F_1}$ and $\omega_{F_2}$, such that for all $x, y \in \Rn$, $t \in I$, $r\in \R$, $a, b \in \Rn$, $p, q \in \Rn$, and $X, Y \in \operatorname{Sym}(n)$, the following conditions hold:
	\begin{enumerate}[label=(\roman*)]
		\item $F_1(y, t, r, b) - F_1(x, t, r, a) \le \omega_{F_1}(|x-y| + b-a)$, whenever $a \le b$,
		\item $F_2^*(t, p, Y) - {F_2}_*(t, p, X) \le \omega_{F_2}(|x-y| + \alpha |x-y|^4)$, 
		whenever $p \coloneqq 4\alpha |x-y|^2 (x-y)$, and 
		\[
		-4||Z|| \left(\begin{array}{cc}\operatorname{Id} & 0 \\ 0 & \operatorname{Id}\end{array}\right) \le \left(\begin{array}{cc}X & 0 \\ 0 & -Y\end{array}\right)\le \left(\begin{array}{cc}Z+\frac{1}{2||Z||}Z^2 & -(Z+\frac{1}{2||Z||}Z^2) \\ -(Z+\frac{1}{2||Z||}Z^2 ) & Z+\frac{1}{2||Z||}Z^2 \end{array}\right),\]
		with $Z \coloneqq 4 \alpha |x-y|^2 \operatorname{Id} + 8\alpha (x-y) \otimes (x-y)$.
		\item $F_1(y, t, r, b) - F_1(x, t, r, a) \le C_{F_1} + K_F |x-y|$, whenever $b \le a$
		\item $F_2^*(t, p, Y) - {F_2}_*(t, p, X)$ is locally bounded.
	\end{enumerate}
	
\end{enumerate}
\begin{enumerate}[label=GU)]
	\item\label{it:G_growth} The function $G$ can be written as $G(x, t, r, p) = G_1(x, t, r) + G_2(t, p)$ with $G_1$ and $G_2$ continuous. Moreover, there are $C_{G_1} > 0$,  
	$K_G > 0$, and a modulus of continuity $\omega_{G_1}$, such that for all $x, y \in \Rn$, $t \in I$, $r\in \R$, and $p, q \in \Rn$, the following conditions hold:
	\begin{enumerate}[label=(\roman*)]
		\item $G_1(x, t, r) - G_1(y, t, r) \le \omega_{G_1}(|x-y|)$,
		\item $G_1(x, t, r) - G_1(y, t, r) \le C_{G_1} + K_G |x-y|$.
	\end{enumerate}
\end{enumerate}

\begin{theorem}[Comparison Principle in $\Rn$]\label{thm:comp_rn}
	Let $I \coloneqq (0, T)$ with $T < \infty$. Assume that $F$ satisfies \emph{\ref{it:F_inc}}, with $\gamma = \mathcal{S}_{\textrm{max}}L_G +\eta$, $\eta > 0$, \emph{\ref{it:F_growth}}, $G$ satisfies \emph{\ref{it:G_Lipschitz}}, \emph{\ref{it:G_growth}}, and $\mathcal{S}$ fulfills \emph{\ref{it:S_mon}}, \emph{\ref{it:S_bdd}} on $\Rn$.
	Moreover, let $u$ be a viscosity subsolution and $v$ be a viscosity supersolution of \eqref{eq:pde} with 
	\begin{equation}\label{eq:comp_rn_assumption}
	u(x, t) - v(y, t) \le L(1+|x|+|y|) \text{ for all } (x, y, t) \in \Rn \times \Rn \times I
	\end{equation}
	for some $L > 0$ which is independent of $t$. If $u(\cdot, 0) \le v(\cdot, 0)$ then $u \le v \text{ in } \Rn \times I$.
\end{theorem}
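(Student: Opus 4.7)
The plan is to adapt the doubling-of-variables argument of Theorem \ref{thm:comp} from a bounded domain to $\Rn$ by inserting a linear-growth penalty at spatial infinity. The extra $\eta > 0$ in the monotonicity constant $\gamma = \mathcal{S}_{\max}L_G + \eta$ of \ref{it:F_inc} plays the role of the strict-subsolution gap used in Theorem \ref{thm:comp}, so no separate strictness reduction is needed.

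Assume for contradiction that $M \coloneqq \sup_{\Rn \times I}(u - v) > 0$, and for parameters $\alpha, \beta, \kappa > 0$ consider
\[
\Phi_{\alpha,\beta,\kappa}(x,y,t) \coloneqq u(x,t) - v(y,t) - \alpha|x-y|^4 - \beta\psi(x) - \beta\psi(y) - \frac{\kappa}{T-t},
\]
with $\psi(x) \coloneqq (1+|x|^2)^{1/2}$. The growth hypothesis \eqref{eq:comp_rn_assumption} pitted against the linear growth of $\beta\psi$ forces $\Phi \to -\infty$ as $|x| + |y| \to \infty$, and the $\kappa/(T-t)$ term prevents a maximizer from running to $t = T$. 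Hence for $\beta,\kappa$ small and $\alpha$ large, $\Phi$ attains a positive supremum at some $(\hat x,\hat y,\hat t) \in \Rn \times \Rn \times (0,T)$, with the initial condition ruling out $\hat t = 0$. Standard penalty estimates give $\alpha|\hat x-\hat y|^4 \to 0$ as $\alpha\to\infty$ with $\beta,\kappa$ fixed, while $\beta\psi(\hat x), \beta\psi(\hat y)$ stay bounded.

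The parabolic Crandall--Ishii lemma then produces jets $(\hat a, \hat p_X, X') \in \bar{\mathcal P}^{2,+} u(\hat x,\hat t)$ and $(\hat b, \hat p_Y, Y') \in \bar{\mathcal P}^{2,-} v(\hat y,\hat t)$ with $\hat a - \hat b = \kappa/(T-\hat t)^2 > 0$, $\hat p_X = p + \beta\nabla\psi(\hat x)$, $\hat p_Y = p - \beta\nabla\psi(\hat y)$ for $p \coloneqq 4\alpha|\hat x-\hat y|^2(\hat x-\hat y)$, and Hessians $X', Y'$ satisfying the matrix inequality in \ref{it:F_growth}(ii) up to $O(\beta)$ corrections from $\nabla^2\psi$. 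Because $\hat a > \hat b$, \ref{it:S_mon} guarantees that the selections $\mu \in \mathcal{S}(\hat a),\ \nu \in \mathcal{S}(\hat b)$ obtained from \eqref{eq:def_subsol} and \eqref{eq:def_supersol} obey $\mu \le \nu$. Subtracting the two inequalities, using $\mu \le \nu$, $G \ge 0$, \ref{it:G_Lipschitz}, the splitting $G = G_1 + G_2$, and then \ref{it:F_inc} to isolate the $\eta$-gap on the left, one arrives at
\[
\eta\bigl(u(\hat x,\hat t) - v(\hat y,\hat t)\bigr) \le F^*(\hat y,\hat t,v,\hat b,\hat p_Y,Y') - F_*(\hat x,\hat t,v,\hat a,\hat p_X,X') + \mathcal{S}_{\max}\bigl(\omega_{G_1}(|\hat x-\hat y|) + O(\beta)\bigr).
\]
Splitting $F = F_1 + F_2$ and sending $\alpha \to \infty$, $\kappa \to 0$, $\beta \to 0$ in this order, the $F_2$ matrix difference is killed by \ref{it:F_growth}(ii), the $F_1$ difference is first bounded uniformly via \ref{it:F_growth}(iii) (since $\hat b \le \hat a$) and then driven to zero by continuity of $F_1$ on the compact set fixed by $\beta$, using that $|\hat x-\hat y| \to 0$ and $\hat a - \hat b \to 0$, while the $\beta$-penalty corrections are controlled uniformly by \ref{it:F_growth}(iv) and \ref{it:G_growth}(ii). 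The right-hand side therefore vanishes in the triple limit while the left-hand side remains $\ge \eta M/2 > 0$, a contradiction.

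The main obstacle is the tension between \ref{it:S_mon}, which is activated only by $\hat a > \hat b$ and therefore requires $\kappa > 0$, and the vanishing-modulus clause \ref{it:F_growth}(i), which is stated only for $\hat a \le \hat b$. The uniform linear-growth clauses \ref{it:F_growth}(iii)(iv) and \ref{it:G_growth}(ii) are exactly the assumptions needed to keep all error terms bounded during this interchange, and the continuity of $F_1$ supplies the vanishing rate once $\kappa \to 0$ has closed the gap between $\hat a$ and $\hat b$ on the compact set localized by the $\beta$-penalty.
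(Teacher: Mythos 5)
Your single-step argument has a genuine gap in the localization at infinity. You use the linear penalty $\beta\psi(x)+\beta\psi(y)$ with $\psi(x)=(1+|x|^2)^{1/2}$ and claim that ``for $\beta,\kappa$ small and $\alpha$ large, $\Phi$ attains a positive supremum.'' But the assumed bound \eqref{eq:comp_rn_assumption} only gives $u(x,t)-v(y,t)\le L(1+|x|+|y|)$; taking $x=y\to\infty$, the penalized function behaves like $L(1+2|x|) - 2\beta|x|$, which does not tend to $-\infty$ when $\beta<L$. So the supremum need not be attained for small $\beta$. If instead you fix $\beta>L$ to force attainment, you can no longer send $\beta\to 0$, and the $O(\beta)$ corrections from $\nabla\psi$ and $\nabla^2\psi$ stay of size $O(L)$ in the final inequality — which does not contradict $\eta M/2>0$, since $M$ and $L$ are a priori unrelated.

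The paper resolves exactly this obstruction with a two-step structure that your proposal omits. Step~1 proves the \emph{a priori} growth estimate
\[
\sup_{(x,y,t)}\bigl(u(x,t)-v(y,t)-2\eta^{-1}K|x-y|-\tfrac{\gamma}{T-t}\bigr)<\infty ,\qquad K=K_F+\mathcal{S}_{\max}K_G,
\]
by doubling variables against a family of barriers $\beta_R$ with fixed linear growth rate $\geq 2L$ at infinity (so attainment is automatic) but $\beta_R\to0$ pointwise as $R\to\infty$, and using the $\eta$-gap from \ref{it:F_inc} together with \ref{it:F_growth}(iii),(iv), \ref{it:G_growth}(ii) to bound $u(\hat x,\hat t)-v(\hat y,\hat t)$ by a constant independent of $R$. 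Only with this sharpened bound — growth only in $|x-y|$, not in $|x|+|y|$ — does Step~2 go through: the quadratic penalty $\epsilon(|x|^2+|y|^2)$ then localizes for every $\epsilon>0$ and, crucially, yields $\alpha|\hat x-\hat y|^4$ bounded uniformly in $\epsilon$, which is what lets the moduli in \ref{it:F_growth}(i),(ii) and \ref{it:G_growth}(i) be driven to zero. Without Step~1 you have no way to make the penalty parameter small while keeping the maximizer in a controlled compact set, so the contradiction cannot close.

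Incidentally, you also note a ``tension'' between \ref{it:S_mon} requiring $\hat a>\hat b$ and \ref{it:F_growth}(i) being stated for $b\le a$; in the paper this is not a conflict — the Jensen--Ishii lemma produces $a-b=\gamma/(T-\hat t)^2>0$, so $b\le a$ holds and $F_1(\hat y,\hat t,r,\hat b)-F_1(\hat x,\hat t,r,\hat a)\le\omega_{F_1}(|\hat x-\hat y|+\hat a-\hat b)$ applies directly — but this reading only yields the decay once $\gamma\to 0$, which again presupposes Step~1's uniform bounds.
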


\begin{proof}
	This theorem is proved in \ref{proof:comp_rn}.
\end{proof}

\begin{remark}
	If the difference of the sub- and supersolution in the assumptions of Theorem \ref{thm:comp_rn} is a priori bounded, the growth estimate holds trivially. Hence, $\eta$ can be chosen as zero, i.e., we do not need to have strong monotonicity. However to deal with the second part of the theorem, one of the functions then has to be a strict sub- or supersolution as in Theorem \ref{thm:comp}.
\end{remark}

\begin{example}[Level-set Mean Curvature Flow]\label{ex:mcf}
	It turns out that viscosity solutions are particularly well-suited to prove existence of solutions to the level-set formulation of the mean curvature flow \cite{Crandall1992}. In fact, let $d$ be the signed distance function of some surface, then it turns out that the normal velocity is given by $d_t$ and the curvature by $\Delta d$. Replacing $d$ by any differentiable function $u$ with $\sgn(u) = \sgn(d)$ and $u = 0 \iff d = 0$, we can compute the normal velocity by $\frac{u_t}{|\nabla u|}$ and the curvature by $\operatorname{div}(\frac{\nabla u}{|\nabla u|})$. Therefore, we postulate the following equation (see also \eqref{eq:R}) as a rate-independent mean curvature flow
	\[
	\partial R(\tfrac{u_t}{|\nabla u|}) \ni \operatorname{div}(\tfrac{\nabla u}{|\nabla u|}) \text{ on } \{u(\cdot, t) = 0\}.
	\]
	Multiplying this equation by $|\nabla u|$, rearranging, and introducing a time-dependent forcing $f$, we obtain
	\begin{equation}\label{eq:ri_mcf}
	|\nabla u| \partial R(\tfrac{u_t}{|\nabla u|}) - |\nabla u|  \operatorname{div}(\tfrac{\nabla u}{|\nabla u|}) \ni f \text{ on } \{u(\cdot, t) = 0\}.
	\end{equation}
	
	Now to prove existence of a continuous function $u : \Rn \times I \to \R$ that solves \eqref{eq:ri_mcf}, we are going to solve the following equation
	\begin{equation}\label{eq:ri_mcf_monotone}
	|\nabla u| \partial R(u_t) - |\nabla u|  \operatorname{div}(\tfrac{\nabla u}{|\nabla u|}) + u \ni f \text{ on } \Rn \times I.
	\end{equation}
	Note, that in the set $\{u(\cdot, t) = 0\}$ the term $+u$ vanishes and it renders the whole equation monotone in $u$. Let us now introduce $$F(x, t, r, p, X) \coloneqq -\tr(X) + \tr(\frac{p\otimes p}{|p|^2}X) + r - f(x, t),\quad G(p) \coloneqq |p|,$$then \eqref{eq:ri_mcf_monotone} is equivalent to
	\begin{equation*}
	F(x, t, u, \nabla u, D^2 u) \in -\partial R(u_t) G(\nabla u) \text{ on } \Rn \times I.
	\end{equation*}
	The function $F$ is degenerate elliptic, discontinuous in $p=0$, $G$ is Lipschitz and can be decomposed as required by \ref{it:F_growth}, \ref{it:G_growth}. Indeed, the only difficulty is to prove that \ref{it:F_growth}(ii) is satisfied. For this, we need to compute the upper and lower semi-continuous envelopes. It is easy to see that
	\begin{align*}
	F^*(x, t, r, p, X) &= \left\{ \begin{array}{ll} F(x, t, r, p, X) &\text{ if } p \not= 0, \\ -\tr(X) + \lambda_\textrm{max}(X) + r - f(x, t) &\text{ if } p = 0, \end{array} \right. \\
	F_*(x, t, r, p, X) &= \left\{ \begin{array}{ll} F(x, t, r, p, X) &\text{ if } p \not= 0, \\ -\tr(X) + \lambda_\textrm{min}(X) + r - f(x, t) &\text{ if } p = 0, \end{array} \right.
	\end{align*}
	If $f$ is uniformly continuous, then \ref{it:F_growth}(ii) is satisfied by degenerate ellipticity if $p\not=0$. If $p = 0$, then $|x-y| = 0$ and hence $Z = 0$ which implies that $X = Y = 0$ and the inequality is trivially satisfied.
	
	\begin{theorem}\label{thm:mcf_sol}
		Assume that $f\in C(\Rn \times I)$ is uniformly continuous and it holds that $|f(x, t) -f(x, 0)| \le \Lambda t$, $u_0 \in C^2(\Rn)$ is such that there exists some $\mu \in [-1, 1]$ with
		\[
		\mu|\nabla u_0(x)| - |\nabla u_0(x)|  \operatorname{div}(\tfrac{\nabla u_0(x)}{|\nabla u_0(x)|}) + u_0(x) = f(x, 0),
		\]
		then there is a unique viscosity solution $u \in C(\Rn \times I)$ to \eqref{eq:ri_mcf_monotone} with $u(\cdot, 0) = u_0$.
	\end{theorem}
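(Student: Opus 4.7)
The plan is to combine Perron's construction (Theorem \ref{thm:perron}) with the comparison principle on $\Rn$ (Theorem \ref{thm:comp_rn}): Perron applied to time-affine barriers built from the initial datum produces a discontinuous viscosity solution, and comparison then forces continuity and uniqueness.

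First, I would verify the structural hypotheses of both theorems for $F$, $G$ and $\mathcal{S}:=-\partial R$. The decomposition requirements \ref{it:F_growth} and \ref{it:G_growth} have already been checked in the paragraph preceding the theorem. Since $G(p)=|p|$ depends on neither $(x,t)$ nor $r$, one has $L_G=0$; combined with $\mathcal{S}_{\max}=1$ this gives $\mathcal{S}_{\max}L_G=0$, so \ref{it:F_inc} holds with $\gamma=1$ and $\eta=1>0$ thanks to the linear $+r$ term in $F$. The axioms \ref{it:S_mon} and \ref{it:S_bdd} for $\mathcal{S}=-\partial R$ are immediate, and \ref{it:C_seq}--\ref{it:C_compact} are checked exactly as in Example \ref{ex:fric}.

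Next, I would introduce the barriers
\[
\underline{u}(x,t) := u_0(x)-\Lambda t, \qquad \overline{u}(x,t) := u_0(x)+\Lambda t,
\]
which satisfy $\underline{u}\le\overline{u}$ and match $u_0$ at $t=0$. To see that $\underline{u}$ is a viscosity subsolution, I would test at any $(x_0,t_0)$ with superjet $(-\Lambda,\nabla u_0(x_0),X)$, noting that $\mathcal{S}(-\Lambda)=\{1\}$. Where $\nabla u_0(x_0)\ne 0$, the operator is continuous, and substituting the compatibility identity for $-|\nabla u_0|\operatorname{div}(\nabla u_0/|\nabla u_0|)+u_0$ together with $|f(x,t)-f(x,0)|\le\Lambda t$ yields
\[
F(x_0,t_0,\underline{u},-\Lambda,\nabla u_0,D^2 u_0) \le -\mu_0|\nabla u_0| \le |\nabla u_0| = 1\cdot G(\nabla u_0),
\]
which degenerate ellipticity extends to arbitrary $X\ge D^2 u_0(x_0)$. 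At a critical point of $u_0$, $F_*$ acquires an extra $\lambda_{\min}(X)$ contribution, and I would close the inequality by reading the compatibility identity through the lower semicontinuous envelope of the stationary operator at $x_0$. The supersolution property of $\overline{u}$ follows by the symmetric argument with $\mu=-1\in\mathcal{S}(\Lambda)$ and $F^*$.

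With the barriers in hand, Theorem \ref{thm:perron} delivers discontinuous viscosity solutions $V\le U$ sandwiched by $\underline{u}\le V\le U\le \overline{u}$, and in particular $V(\cdot,0)=U(\cdot,0)=u_0$. The sandwich yields $U^*(x,t)-V_*(y,t)\le u_0(x)-u_0(y)+2\Lambda t$, which inherits the linear growth \eqref{eq:comp_rn_assumption} from $u_0$, so Theorem \ref{thm:comp_rn} gives $U^*\le V_*$; combined with $V\le U$ this forces $U=V$ to be continuous, and a second application of comparison against any competing solution $\tilde u$ yields uniqueness. The principal technical obstacle is the critical-point case in the sub/supersolution verification, where the singular operator is tested via $F_*$ (or $F^*$) rather than $F$ and the compatibility identity must accordingly be interpreted through its semicontinuous envelopes; once this reading is accepted, the remainder of the argument is a routine assembly of the results already established.
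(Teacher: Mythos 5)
Your proposal is correct and takes essentially the same route as the paper: verify the structural hypotheses already checked in the preamble, use the compatibility condition together with $|f(x,t)-f(x,0)|\le\Lambda t$ to show that $u_0(x)\pm\Lambda t$ are super-/subsolutions, then conclude by Perron's method (Theorem~\ref{thm:perron}) and comparison on $\Rn$ (Theorem~\ref{thm:comp_rn}). Your proposal is in fact somewhat more explicit than the paper's proof on two minor points -- making $L_G=0$ and $\eta=1$ visible in \ref{it:F_inc}, and flagging that at critical points of $u_0$ the barrier inequality must be read through the semicontinuous envelopes $F_*$, $F^*$ -- but these are refinements of the same argument, not a different approach.
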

	\begin{proof}
		We already discussed that equation \eqref{eq:ri_mcf_monotone} satisfies conditions \ref{it:F_inc}, \ref{it:F_growth}, \ref{it:G_Lipschitz}, \ref{it:G_growth}, \ref{it:C_seq},  \ref{it:C_epsDelta}, \ref{it:S_bdd}, and \ref{it:S_mon}. Moreover, we have a strict monotonicity in $r$. Hence, the comparison principle, Theorem \ref{thm:comp_rn}, Remark \ref{rmk:comp}, and Perron's construction, Theorem \ref{thm:perron}, are applicable.
		
		As we assumed that the initial condition satisfies the equation, we can introduce
		$\overline{u}(x, t) \coloneqq u_0(x) + \Lambda t$, it holds
		\[
		|\nabla u_0| - |\nabla u_0|  \operatorname{div}(\tfrac{\nabla u_0}{|\nabla u_0|}) + u_0(x) + \Lambda t \ge f(x, 0) + \Lambda t \ge f(x, t).
		\]
		Hence, $\overline{u}$ is a viscosity supersolution with $\overline{u}(x, 0) = u_0(x)$. Likewise a viscosity subsolution $\underline{u}(x, t) \coloneqq u_0(x) - \Lambda t$ can be computed.
		
		Due to the existence of these solutions, we can construct a unique viscosity solution $u$ with $\underline{u} \le u \le \overline{u}$ and hence $u(\cdot, 0) = u_0$.
	\end{proof}
	It is a rather delicate affair to construct sub- and supersolutions that provide quantitative statements on the solutions of this mean-curvature flow model. Let us however mention how to pose a problem in this setting, consider an initial condition $u_0(x) \coloneqq \phi(\tilde{u}_{0}(x))$, where $\tilde{u}_0$ is some smooth level-set function and $\phi$ is a smooth, increasing cutoff function with $\phi(0) = 0$, $\phi'(s) > 0$ if and only if $s \in (-\epsilon, \epsilon)$ with some $\epsilon$ that will be fixed later. Note, that $\{ u_0(x) = 0\} = \{ \tilde{u}_0(x) = 0\}$ and due to the cutoff, we control the growth of the function at infinity and we have the relations $|\nabla u_0| = |\phi'(\tilde{u}_0)| |\nabla \tilde{u}_0|$ and $|\nabla u_0| \div(\frac{\nabla u_0}{|\nabla u_0|}) = \phi'(\tilde{u}_0)|\nabla \tilde{u}_0| \div(\frac{\nabla \tilde{u}_0}{|\nabla \tilde{u}_0|})$. Moreover, as $u_0$ has to lie in the stable-set, a choice for $f$ would be the following given some decreasing Lipschitz-continuous function $\eta : [0, \infty) \to [0, 1]$ that decays from one to zero,
	\[
		f(x, t) \coloneqq (-\phi'(\tilde{u}_0(x)) \div(\tfrac{\nabla \tilde{u}_0(x)}{|\nabla \tilde{u}_0(x)|}) + \phi(\tilde{u}_0(x))) \eta(t).
	\]
	The existence of a solution now follows by Theorem \ref{thm:mcf_sol}.
	
	Consider, now, for instance the submanifold $\mathbb{S}^{n-1}_r \coloneqq \{ x\in \Rn \;|\; |x| = r\}$. Given an appropriate level-set function at $|x| = r$ we have then the quantity $|\nabla u|\div(\tfrac{\nabla u}{|\nabla u|}) = 2(n-1)$ and $2|\nabla u|\partial R(u_t) = 2r \partial R(u_t)$. Hence, if $r > n-1$, then the curvature of the function is small enough so that it lies in the stable-set and the solution is stationary. On the other hand, if the curvature is big, i.e., $r < n-1$, then the submanifold has to degenerate to a point in order to satisfy the equation for all $t$. This can be generalized to other manifolds, i.e., only the points of the manifold with big enough curvature evolve.
\end{example}

\subsection{Stability Result}\label{ssec:stability}
In this section, we prove a stability result for discontinuous viscosity solutions. If stability holds then the solutions of a sequence of partial differential equations converge to a solution of the limit equation. One of the remarkable features of discontinuous viscosity solutions is that this is true under rather weak assumptions on the equations. In the case of discontinuous viscosity solutions, the stability result is also called the half-relaxed limit method \cite{Barles1987}.

The half-relaxed limits of a sequence of functions $u_n : \Omega \to \R$ which is uniformly locally bounded, i.e., on each compact $K \subset \Omega$, we have $\sup_{n\in \N} \sup_{x\in K} |u_n(x)| < \infty$, are
\[
	\hlimsup_{n\to\infty} u_n(x) \coloneqq \limsup_{n\to \infty, y\to x} u_n(y), \quad \hliminf_{n\to\infty} u_n(x) \coloneqq \liminf_{n\to \infty, y\to x} u_n(y).
\]

\begin{theorem}[Stability Theorem]\label{thm:stability}
	Let $n \in \N$ and $u_n \in \operatorname{USC}(\Omega \times I)$ a sequence of viscosity subsolutions of
	\[
		F_n(x, t, u_n, ({u_n})_t, \nabla u_n, D^2 u_n) \in \mathcal{S}_n(({u_n})_t) G_n(x, t, u_n, \nabla u_n) \text{ in } \Omega \times I,
	\]
	with $F_n: \Omega \times I \times \R \times \R \times \Rn \times \operatorname{Sym}(n) \to \R$ uniformly locally bounded, the functions $G_n : \Omega \times I \times \R \times \Rn \to [0, \infty)$ are uniformly converging to some $G$, and $\mathcal{S}_n : \R \to \mathcal{P}(\R)$ such that there exists $\mathcal{S}: \R \to \mathcal{P}(\R)$ such that whenever $a_n \to a$ and $\mu_n \in \mathcal{S}_n(a_n)$ then there exists a subsequence $\mu_{n_k}$ with $\mu_{n_k} \to \mu \in \mathcal{S}(a)$.
	If the functions $u_n$ are uniformly locally bounded on $\Omega \times I$, then $\overline{u} \coloneqq \hlimsup_{n\to \infty} u_{n}$ is a subsolution to
	\[
		\underline{F}(x, t, u, u_t, \nabla u, D^2 u) \in \mathcal{S}(u_t) G(x, t, u, \nabla u)  \text{ in } \Omega \times I,
	\]
	where $\underline{F} \coloneqq \hliminf_{n\to \infty} F_n$.
	
	The same result holds for a sequence of supersolution with the obvious modifications.
\end{theorem}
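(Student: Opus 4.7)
My plan is to follow the classical half-relaxed limit method (Barles--Perthame), with two adaptations required here: a selection step to pass to the limit in $\mu_k \in \mathcal{S}_{n_k}$, and a careful identification of the limiting operator on the left-hand side. First, $\overline{u}$ is upper-semicontinuous and locally bounded as a half-relaxed limsup, so it makes sense to test it against parabolic super-jets. Fix $(\hat x, \hat t) \in \Omega \times I$ and $(a, p, X) \in \mathcal{P}^{2,+}\overline{u}(\hat x, \hat t)$. I would choose a smooth $\varphi \in C^2(\Omega \times I)$ such that $\overline{u} - \varphi$ has a strict local maximum at $(\hat x, \hat t)$ and $(\varphi_t, \nabla\varphi, D^2\varphi)(\hat x, \hat t) = (a,p,X)$. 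Then the standard perturbation lemma (see \cite[Lemma~6.1]{Crandall1992}, parabolic version) supplies indices $n_k \to \infty$ and points $(x_k, t_k) \to (\hat x, \hat t)$ at which $u_{n_k} - \varphi$ attains a local maximum and $u_{n_k}(x_k, t_k) \to \overline{u}(\hat x, \hat t)$. Setting $(a_k, p_k, X_k) \coloneqq (\varphi_t, \nabla\varphi, D^2\varphi)(x_k, t_k)$, we have $(a_k, p_k, X_k) \in \mathcal{P}^{2,+} u_{n_k}(x_k, t_k)$ and $(a_k, p_k, X_k) \to (a,p,X)$.

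Invoking the subsolution property of $u_{n_k}$, for each $k$ there exists $\mu_k \in \mathcal{S}_{n_k}(a_k)$ with
\[
(F_{n_k})_*(x_k, t_k, u_{n_k}(x_k, t_k), a_k, p_k, X_k) \le \mu_k\, G_{n_k}(x_k, t_k, u_{n_k}(x_k, t_k), p_k).
\]
By the selection hypothesis on $\{\mathcal{S}_n\}$, after extracting a further subsequence one obtains $\mu_k \to \mu \in \mathcal{S}(a)$. Since $G_n \to G$ uniformly on compact sets and $G$ is continuous, the right-hand side converges to $\mu\, G(\hat x, \hat t, \overline{u}(\hat x, \hat t), p)$.

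For the left-hand side, the key observation is that for every open neighborhood $U$ of a point $y_0$ one has $\inf_U F_n = \inf_U (F_n)_*$, because replacing $F_n$ by its lower-semicontinuous envelope does not change infima over open sets; hence
\[
\underline{F}(y_0) \;=\; \hliminf_{n\to\infty} F_n(y_0) \;=\; \hliminf_{n\to\infty} (F_n)_*(y_0),
\]
and in particular $\underline{F}(y_0) \le \liminf_k (F_{n_k})_*(y_k)$ whenever $y_k \to y_0$. Applied here with $y_0 = (\hat x, \hat t, \overline{u}(\hat x, \hat t), a, p, X)$ and $y_k$ the corresponding sequence, this yields
\[
\underline{F}(\hat x, \hat t, \overline{u}(\hat x, \hat t), a, p, X) \;\le\; \mu\, G(\hat x, \hat t, \overline{u}(\hat x, \hat t), p).
\]
Since $\underline{F}$ is itself LSC as a half-relaxed liminf, $(\underline{F})_* = \underline{F}$, and the subsolution inequality for the limit equation is verified. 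The supersolution case for $\hliminf_{n} v_n$ with $\overline{F} = \hlimsup_{n} F_n$ is entirely symmetric. The main obstacle I expect is the perturbation lemma step: producing points $(x_k, t_k)$ along the subsequence so that simultaneously $(x_k, t_k) \to (\hat x, \hat t)$, $u_{n_k}(x_k, t_k) \to \overline{u}(\hat x, \hat t)$, and $(x_k, t_k)$ are local maxima of $u_{n_k} - \varphi$, crucially relies on the strictness of the local maximum of $\overline{u} - \varphi$ together with the uniform local boundedness of the $u_n$.
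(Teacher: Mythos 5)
Your proof follows the same half-relaxed limit strategy as the paper's: apply the Crandall--Ishii--Lions perturbation lemma to obtain touching points along a subsequence, extract $\mu_k \in \mathcal{S}_{n_k}(a_k)$ converging to some $\mu \in \mathcal{S}(a)$ via the hypothesis on $\{\mathcal{S}_n\}$, and pass to the liminf using uniform convergence of $G_n$ and the definition of $\hliminf_n F_n$. You are in fact slightly more careful than the paper's write-up, since you explicitly track $(F_{n_k})_*$ rather than $F_{n_k}$ and justify that $\hliminf_n F_n = \hliminf_n (F_n)_*$ and that $\underline{F}$ is already lower-semicontinuous, points the paper relies on but does not spell out.
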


\begin{proof}
	This theorem is proved in \ref{proof:stability}.
\end{proof}

\begin{remark}
	Note that this theorem also shows that if we have a sequence of viscosity solutions in the sense of \cite{Crandall1992} to an equation of type \eqref{eq:pde} where $\mathcal{S}_n$ are not set-valued. Then they converge in the above sense to a function that is a viscosity solution in the sense of definition \ref{def:dis_vc} when the limit $\mathcal{S}$ is set-valued.
		
	This property was used in \cite{CourteDondlStefanelli} to prove regularity for viscosity solutions to a specific partial differential inclusion.
\end{remark}

\section{Relation between the Perron Solutions, the Vanishing Viscosity Solution and the Maximal Minimizing Movement Solution}\label{sec:per_sol}
In the final section of this article, we want to discuss the relation between the Perron solutions, the vanishing viscosity solution, and the maximal minimizing movement solution. The two Perron solutions $U$ and $V$ of an equation are constructed by maximizing over all subsolutions respectively minimizing over all supersolutions, see Theorem \ref{thm:perron}. Hence, they are in a sense the extremal solutions. Indeed, if we take any other discontinuous viscosity solution that lies in the given bounds, it will by definition lie between $V$ and $U$. Therefore, both solutions are of particular interest and we analyze them for the following ordinary differential inclusion
\begin{equation}\label{eq:ex2}
\begin{array}{ll}
\partial R(u_t) + e(u) \ni f & \text{ in } (0, T), \\
u(0) = 0 &,
\end{array}
\end{equation}
with $e \in C_{\rm loc}^{0, 1}(\R)$ is a locally Lipschitz-continuous function with $e(r) = 0$ if and only if $r=0$, $\lim_{r\to \pm \infty} e(r) = \pm \infty$ (e.g., $-e$ is the derivative of a tilted double-well potential), and $f : (0, T) \to \R$ is also Lipschitz-continuous and increasing with $|f(0)| \le 1$. The subdifferential $\partial R$ is given by \eqref{eq:R}.

In the following, we need to consider the monotonically increasing envelopes of $e$. They are defined as follows
\begin{align}
	\label{eq:mon_env_lower}e_m(x) \coloneqq \sup\{ m(x) \;|\; m \text{ is increasing and } m \le e \}, \\
	\label{eq:mon_env_upper}e^m(x) \coloneqq \inf\{ m(x) \;|\; m \text{ is increasing and } m \ge e \}.
\end{align}
As $e$ is continuous and hence locally bounded, they are well-defined and also increasing.

The existence of the Perron solutions $U$ and $V$, see Theorem \ref{thm:perron}, follows by constructing suitable sub- and supersolutions. To be as general as possible we have to find sub- and supersolutions that are rapidly increasing/decreasing in order to avoid that the choice of the sub-/supersolution constricts the Perron solutions. For this, we note that any differentiable, monotonically decreasing function $u$ with $u(0) = 0$ is a viscosity subsolution, as $f$ is increasing, i.e.,
$$ \partial R(u_t) + e(u) \ni -1 + e(u) \le -1 + 0 \le f(0) \le f(t).$$
Moreover, if we take any strictly monotone function $m \ge e^m$, the equation
\begin{equation}
	\begin{array}{ll}
	\partial R(v_t) + m(v) \ni f &, \text{ in } (0, T), \\
	v(0) = 0 &,
	\end{array}
\end{equation}
has a unique viscosity solution $v$ as comparison, Theorem \ref{thm:comp}, holds. Any such solution $v$ is a viscosity supersolution to \eqref{eq:ex2}, i.e., for all $t\in I$ and all $b\in \mathcal{P}^{1,-} v(t)$ there is $\nu \in \partial R(b)$ such that
$$\nu + e(v(t)) \le \nu + m(v) \le f(t).$$

If we choose a rapidly decreasing $u$ and a rapidly increasing $v$, we obtain Perron solutions $U$ and $V$ from Theorem \ref{thm:perron} and it holds $u \le V \le U \le v$.

\begin{remark}
	Any discontinuous viscosity solution to \eqref{eq:ex2} lies between $V$ and $U$ if it is bounded by $u$ and $v$. By our choice of $u$ and $v$ these bounds will always be satisfied.
\end{remark}

\subsection{Viscous Approximation}
We prove now that the smaller Perron solution, i.e., $V$, which is the infimum over all supersolutions, coincides with the discontinuous viscosity solution $u_{\rm vis}$ of \eqref{eq:ex2} which is obtained by considering the vanishing viscosity limit. Therefore, consider for $\epsilon > 0$ the following system
\begin{equation}\label{eq:ex2_vis}
	\begin{array}{ll}
	\epsilon u_t + \partial R(u_t) + e(u) \ni f &, \text{ in } (0, T), \\
	u(0) = 0 &.
	\end{array}
\end{equation}
\begin{theorem}
	For any $\epsilon > 0$, there exists a unique viscosity solution $u_\epsilon \in C(I)$ with $u_\epsilon(0) = 0$ to \eqref{eq:ex2_vis} and $u_\epsilon$ is positive and increasing.
\end{theorem}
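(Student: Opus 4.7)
The plan is to reduce the inclusion \eqref{eq:ex2_vis} to a classical ODE with Lipschitz right-hand side and then invoke the comparison principle of Theorem \ref{thm:comp} to extract positivity and monotonicity. Set $\phi_\epsilon(s) \coloneqq \epsilon s + \partial R(s)$; its graph is maximal monotone and surjective onto $\R$, with single-valued inverse
\[
\phi_\epsilon^{-1}(y) = \begin{cases} (y - \sgn(y))/\epsilon & \text{if } |y| > 1, \\ 0 & \text{if } |y| \le 1, \end{cases}
\]
which is continuous and globally $(1/\epsilon)$-Lipschitz. Hence the inclusion \eqref{eq:ex2_vis} is pointwise equivalent to the classical ODE $u_t = \phi_\epsilon^{-1}\bigl(f(t) - e(u(t))\bigr)$ with $u(0) = 0$.

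Existence and uniqueness of a $C^1$ solution $u_\epsilon$ follow from Picard--Lindel\"of, since the right-hand side is continuous in $t$ and locally Lipschitz in $u$ (composition of the Lipschitz $\phi_\epsilon^{-1}$ with the locally Lipschitz $e$). The coercivity assumption $\lim_{r \to \pm\infty} e(r) = \pm \infty$ ensures that whenever $|u_\epsilon|$ becomes large, $f(t) - e(u_\epsilon)$ and $u_\epsilon$ have opposite signs, so $u_{\epsilon, t}$ points back toward the origin; combined with a Gr\"onwall-type bound this rules out finite-time blow-up and yields a solution on all of $I$. Since $u_\epsilon \in C^1(I)$ satisfies the ODE pointwise and any $a$ appearing in $\mathcal{P}^{2, +} u_\epsilon(t)$ or $\mathcal{P}^{2, -} u_\epsilon(t)$ is forced to equal $u_{\epsilon, t}(t)$, $u_\epsilon$ is a continuous viscosity solution of \eqref{eq:ex2_vis}. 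Uniqueness within the viscosity class will then follow from comparison in Theorem \ref{thm:comp}, applied after the exponential rescaling $U(t) \coloneqq e^{-\lambda t} u(t)$ described in Remark \ref{rmk:comp} with $\lambda$ larger than the local Lipschitz constant of $e$; this is the standard device that compensates the lack of monotonicity of $e$ in the zeroth-order term and restores \ref{it:F_inc}.

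For positivity, I would compare $u_\epsilon$ with the constant subsolution $\underline{u} \equiv 0$: its super-jet in the time variable reduces to $\{0\}$, and choosing $\mu = -1 \in \partial R(0) = [-1, 1]$ gives $\epsilon \cdot 0 + \mu + e(0) = -1 \le f(0) \le f(t)$ because $f$ is nondecreasing with $|f(0)| \le 1$. Hence $\underline{u}$ is a viscosity subsolution, and comparison yields $u_\epsilon \ge 0$. For monotonicity, fix $h \in (0, T)$ and set $U(t) \coloneqq u_\epsilon(t + h)$ on $[0, T - h]$; since $f$ is nondecreasing, $U$ satisfies the same inclusion with the larger right-hand side $f(t+h) \ge f(t)$ and is therefore a viscosity supersolution of \eqref{eq:ex2_vis}. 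The initial inequality $U(0) = u_\epsilon(h) \ge 0 = u_\epsilon(0)$ follows from positivity, and comparison between the solution $u_\epsilon$ and the supersolution $U$ yields $u_\epsilon(t + h) \ge u_\epsilon(t)$ for every $t \in [0, T - h]$.

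The main obstacle I foresee is making comparison genuinely applicable to \eqref{eq:ex2_vis}: condition \ref{it:F_inc} fails as stated because $e$ is not monotone, so the exponential rescaling of Remark \ref{rmk:comp} must be justified with some care here. Once that reduction is in place, both positivity and monotonicity come from the same comparison argument, and verifying the remaining structural hypotheses (\ref{it:C_seq}--\ref{it:C_compact}, \ref{it:S_mon}, \ref{it:S_bdd}, and the trivial versions of the $G$-conditions for $G \equiv 1$) for this autonomous one-dimensional inclusion is routine.
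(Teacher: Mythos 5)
Your proposal is correct and delivers all the assertions of the statement, but the route to \emph{existence} is genuinely different from the paper's. The paper constructs sub- and supersolutions (a rapidly decreasing function for the subsolution, the solution of a monotonized problem for the supersolution) and then invokes Perron's method, Theorem \ref{thm:perron}, together with the comparison principle to produce and pin down $u_\epsilon$. You instead observe that $\phi_\epsilon(s) = \epsilon s + \partial R(s)$ is maximal monotone with a single-valued, globally $(1/\epsilon)$-Lipschitz inverse, reduce \eqref{eq:ex2_vis} to the classical ODE $u_t = \phi_\epsilon^{-1}(f(t)-e(u))$, and obtain a $C^1$ solution directly from Picard--Lindel\"of and a continuation argument. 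Since a $C^1$ function has singleton first-order jets in time, the classical solution is automatically a viscosity solution. This is more elementary and self-contained for the present one-dimensional, autonomous problem, at the cost of being a bespoke argument that does not generalize to the PDE setting the way Perron's method does. Your uniqueness argument (comparison after the exponential rescaling of Remark \ref{rmk:comp}, compensating the local Lipschitz but non-monotone $e$), as well as the positivity argument (zero as a subsolution, using $|f(0)|\le 1$) and the monotonicity argument (the time-shift $u_\epsilon(\cdot+h)$ is a supersolution because $f$ is increasing), match the paper's almost verbatim. The only minor polish I would add: the non-blow-up argument is really a sign argument on $u_{\epsilon,t}$ for $|u_\epsilon|$ large (if $u_\epsilon$ is large positive then $f - e(u_\epsilon) < -1$ so $u_{\epsilon,t}<0$, and symmetrically); no Gr\"onwall estimate is needed.
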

\begin{proof}
	Let us first note, that all the prerequisites for Theorem \ref{thm:comp} and Remark \ref{rmk:comp}, i.e., \ref{it:C_seq}, \ref{it:C_epsDelta}, \ref{it:C_compact}, \ref{it:F_inc}, \ref{it:F_modulus}, \ref{it:G_Lipschitz},  \ref{it:G_modulus},  are satisfied.
	
	Therefore, we have a comparison principle. Existence and uniqueness is now a matter of constructing a sub- and a supersolution by Perron's method, Theorem \ref{thm:perron}. With a similar discussion as above, we can deduce the existence of a unique solution $u_\epsilon$.
	
	First, let us note that the zero function is a subsolution and hence $0 \le u_\epsilon$. Now, take $\delta > 0$ and define $w(t) \coloneqq u_\epsilon(t+\delta)$, then $w(0) \ge 0$. Moreover, for all $t \in I$ and $b\in \mathcal{P}^{1, -} w(t) = \mathcal{P}^{1, -} u(t+\delta)$ there exists $\nu \in \partial R (a)$ such that
	\[
		\epsilon b + \nu + e(w(t)) = \epsilon b + \nu + e(u(t+\delta)) \ge f(t+\delta) \ge f(t). 
	\]
	Recall that we assumed $f$ to be increasing. Hence, $w$ is a supersolution and therefore $u_\epsilon(t) \le w(t) = u_\epsilon(t+\delta)$. As this holds for any $\delta > 0$, $u_\epsilon$ is increasing.
\end{proof}

\begin{theorem}\label{thm:ex2_vis_approx}
	Let $u_{\rm vis}(t) \coloneqq \sup_{\epsilon > 0}u_\epsilon(t)$, then $u_{\rm vis}$ is a discontinuous viscosity solution to \eqref{eq:ex2} and it holds
	\[
		{u_{\rm vis}}_*(t) = u_{\rm vis}(t) = \hliminf_{\epsilon\to 0} u_\epsilon(t) \text{ and } u_{\rm vis}^*(t) = \hlimsup_{\epsilon\to 0} u_\epsilon(t).
	\]
	
	If $f$ is strictly increasing with $f'\ge \gamma > 0$, then $u_{\rm vis}$ is also a discontinuous viscosity solution to
	\begin{equation*}
	\partial R(u_t) + e^m(u) \ni f , \text{ in } (0, T).
	\end{equation*}
\end{theorem}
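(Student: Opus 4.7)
The plan is to handle the two assertions in sequence, using the stability theorem (Theorem~\ref{thm:stability}) as the main engine and the monotonicity $\epsilon \mapsto u_\epsilon$ as the key preliminary observation. Because each $u_\epsilon$ is continuous and nondecreasing in $t$, every super-jet $a \in \mathcal{P}^{1,+}u_\epsilon(\hat t)$ is non-negative; for $\epsilon<\epsilon'$, plugging $(a,\mu) \in \mathcal{P}^{1,+}u_{\epsilon'}(\hat t) \times \partial R(a)$ that realizes the $\epsilon'$-subsolution inequality into the $\epsilon$-equation yields $\epsilon a + \mu + e(u_{\epsilon'}(\hat t)) \le \epsilon' a + \mu + e(u_{\epsilon'}(\hat t)) \le f(\hat t)$, so $u_{\epsilon'}$ is a subsolution of the $\epsilon$-equation. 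Comparison (Theorem~\ref{thm:comp} together with Remark~\ref{rmk:comp}) gives $u_{\epsilon'} \le u_\epsilon$, hence $\epsilon \mapsto u_\epsilon$ is pointwise non-increasing and $u_{\rm vis}(t) = \lim_{\epsilon \downarrow 0} u_\epsilon(t)$ is a pointwise monotone limit. As a supremum of continuous non-decreasing functions, $u_{\rm vis}$ is LSC and nondecreasing, hence left-continuous; in particular $(u_{\rm vis})_* = u_{\rm vis}$.

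Exploiting the monotonicity in $\epsilon$ and the continuity of $\epsilon \mapsto u_\epsilon$ (a further consequence of comparison), one verifies $\sup_{0<\epsilon<\delta} u_\epsilon(s) = u_{\rm vis}(s)$ and $\inf_{0<\epsilon<\delta} u_\epsilon(s) = u_\delta(s)$, which together with left-continuity of $u_{\rm vis}$ yield $\hlimsup_{\epsilon \to 0} u_\epsilon = (u_{\rm vis})^*$ and $\hliminf_{\epsilon \to 0} u_\epsilon = u_{\rm vis}$. Since the family $F_\epsilon(t,r,a) = \epsilon a + e(r) - f(t)$ converges locally uniformly to $F_0(t,r) = e(r) - f(t)$, Theorem~\ref{thm:stability} then tells us that $(u_{\rm vis})^*$ is a viscosity subsolution and $u_{\rm vis} = (u_{\rm vis})_*$ a viscosity supersolution of \eqref{eq:ex2}, so $u_{\rm vis}$ is a discontinuous viscosity solution. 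This settles the first assertion.

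For the $e^m$-equation, the supersolution inequality is immediate from $e^m \ge e$: any sub-jet $(b,\nu) \in \mathcal{P}^{1,-}u_{\rm vis}(t) \times \partial R(b)$ passing the $e$-test passes the $e^m$-test. The subsolution inequality for $(u_{\rm vis})^*$ is the main obstacle. The plan is to prove the bound $e^m((u_{\rm vis})^*(t)) \le f(t) - 1$, which allows the choice $\mu = 1 \in \partial R(a)$ for every super-jet $a \ge 0$. Fix $y_0 \in (0, (u_{\rm vis})^*(t))$: using $(u_{\rm vis})^*(t) = \lim_{s \to t^+} u_{\rm vis}(s)$, select $s_n \to t^+$ and $\epsilon_n \to 0$ with $u_{\epsilon_n}(s_n) > y_0$; by continuity and monotonicity of $u_{\epsilon_n}$ one finds $s_n' \in (0, s_n]$ with $u_{\epsilon_n}(s_n') = y_0$ at which the viscous solution must be moving, so the viscosity subsolution inequality forces $\epsilon_n \dot u_{\epsilon_n}(s_n') + 1 + e(y_0) \le f(s_n')$ and hence $e(y_0) \le f(s_n') - 1$. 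Since $s_n' \le s_n \to t^+$ and $f$ is Lipschitz, passing $n \to \infty$ yields $e(y_0) \le f(t) - 1$; combined with continuity of $e$ at $y_0 \nearrow (u_{\rm vis})^*(t)$ and the $e$-subsolution bound at the given super-jet, this gives $\sup_{0 \le y \le (u_{\rm vis})^*(t)} e(y) \le f(t) - 1$.

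The principal obstacle is exactly this last step: the strict monotonicity $f' \ge \gamma > 0$ is what allows the bound $e(y_0) \le f(s_n') - 1$ to pass through the limit without losing the slack between $f(s_n')$ and $f(t)$. A pleasant side observation is that the assumption $e(r) = 0 \Leftrightarrow r = 0$ forces $e^m(0) = 0$ (any positive value of $e$ on $(-\infty,0)$, paired with $e(y) \to -\infty$, would produce another zero of $e$ by the intermediate value theorem), so $\sup_{y \le 0} e(y) = 0$ and the initial sticking interval $\{u_{\rm vis} = 0\}$ requires no special treatment: the super-jet has $a = 0$ and the choice $\mu = f(t) \in [-1,1] = \partial R(0)$ (guaranteed by $|f(0)| \le 1$ and monotonicity of $f$) yields $\mu + e^m(0) = f(t)$, closing the argument.
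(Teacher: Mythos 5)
Your first assertion is handled essentially as in the paper: monotonicity of $\epsilon \mapsto u_\epsilon$ by comparison, the observation that a pointwise-increasing limit of continuous increasing functions has $\hliminf u_\epsilon = u_{\rm vis}$ and $\hlimsup u_\epsilon = u_{\rm vis}^*$ (the paper cites \cite[Lemma~2.18]{Bardi1997}; you argue it directly, which is fine), followed by the stability theorem. Nothing to add there.

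For the $e^m$-equation your route is genuinely different. The paper fixes a connected component $(u_0,u_1)$ of $\{e^m-e\neq 0\}$, constructs the explicit $\arctan$ barrier $v_\epsilon$ and uses the comparison principle to force $u_{\rm vis}$ to jump across $(u_0,u_1)$ instantaneously, concluding $u_{\rm vis}(t)\notin\{e^m>e\}$ and hence $e^m(u_{\rm vis}^*(t))=e(u_{\rm vis}^*(t))$. You instead try to prove directly the stronger quantitative bound $e^m(u_{\rm vis}^*(t))\le f(t)-1$, which then makes the subsolution test immediate with $\mu=1\in\partial R(a)$ for every $a\ge 0$. That is an attractive shortcut, and the side remark that $e<0$ on $(-\infty,0)$ (hence $e^m(0)=0$) together with the $a=0$, $\mu=\min\{f(t),1\}$ argument on the sticking set is correct. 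However, there is a genuine gap in the step that produces the bound.

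The gap is in ``one finds $s_n'\in(0,s_n]$ with $u_{\epsilon_n}(s_n')=y_0$ at which the viscous solution must be moving, so the viscosity subsolution inequality forces $\epsilon_n\dot u_{\epsilon_n}(s_n')+1+e(y_0)\le f(s_n')$.'' Two things are asserted without proof: that $u_{\epsilon_n}$ is differentiable at the crossing time, and that the derivative there is strictly positive so that $\partial R(\dot u_{\epsilon_n}(s_n'))=\{1\}$. Neither is automatic from the viscosity framework developed in the paper. If the solution leaves $y_0$ with zero (viscosity) speed, the only available super-jet element at $s_n'$ is $a=0$, for which $\partial R(0)=[-1,1]$ allows $\mu<1$ and the subsolution inequality gives nothing better than $e(y_0)\le f(s_n')-\mu$, which does not close the argument. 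To repair this you need one additional input: either prove (e.g., by rewriting $\epsilon\dot u+\partial R(\dot u)\ni f-e(u)$ as $\dot u=\Phi_\epsilon(f-e(u))$ with $\Phi_\epsilon$ Lipschitz, so that $u_\epsilon$ is a $C^1$ solution of an ODE with Lipschitz right-hand side) that $u_\epsilon$ is absolutely continuous with the equation holding a.e., and then pick a nearby time $\tilde s>s_n'$ where $\dot u_{\epsilon_n}(\tilde s)>0$, sending $\tilde s\downarrow s_n'$ at the end; or replace the pointwise derivative by a viscosity mean-value argument producing a nearby point with a strictly positive super-jet element. The paper's barrier construction avoids this issue entirely, because it needs only the comparison principle already established; your version is shorter but imports a regularity fact for the viscous ODE that should be stated and justified.
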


\begin{proof}
	We note that the monotonicity in $u_\epsilon$ implies that the parabolic sub- and superjet contain only positive values. This allows us to use the comparison principle to show that $u_{\epsilon'} \ge u_\epsilon$ whenever $\epsilon \ge \epsilon'$. Now, we can apply \cite[Lemma~2.18]{Bardi1997} combined with the stability result, Theorem \ref{thm:stability} to obtain the first assertion.

	The proof of the second statement is a little more involved. We consider the set $\{e^m - e \not=0\}$ and take any connected component $(u_0, u_1)$ of this set (due to the continuity of $e$ all connected components of this set are open). Moreover, as $e(x) = 0$ if and only if $x=0$, we have $0\not\in(u_0, u_1)$. Now, assume that there is a time $\overline{t} \in I$ with $u_{\rm vis}(\overline{t}) = \overline{u} \in (u_0, u_1)$. 
	
	There is a sequence $(t_\epsilon) \subset I$ with $u_\epsilon(t_\epsilon) = u_0$ and $u_\epsilon(t) \not= u_0$ for all $t > t_\epsilon$. If $\epsilon \ge \epsilon'$ then we have  $u_{\epsilon}(t_{\epsilon}) = u_0 = u_{\epsilon'}(t_{\epsilon'}) \ge u_{\epsilon}(t_{\epsilon'})$ and we thus obtain $t_{\epsilon} \ge t_{\epsilon'}$. We conclude that $(t_\epsilon)$ is a decreasing sequence, bounded from below by $0$ and therefore convergent to a limit $t_0 \in I$. It holds
	\[
		u_{\rm vis}(t_0) = \sup_{\epsilon > 0} u_\epsilon(t_0) \le \sup_{\epsilon > 0} u_\epsilon(t_\epsilon) = u_0 < u_{\rm vis}(\overline{t})
	\]
	and therefore, we have $t_0 < \overline{t}$.
	Now, define for $\lambda > 0$
	\[
	v_\epsilon(t) \coloneqq u_0 + \tfrac{2}{\pi} \arctan(\lambda \tfrac{(t-t_\epsilon)^2}{\epsilon})(u_1-u_0).
	\]
	This function is a subsolution to $\epsilon u_t +\partial R(u_t) + e(u) = f$ on $(t_\epsilon, T)$. Indeed, by noting that $e(v_{\epsilon}(t_\epsilon + s)) \le e(v_{\epsilon}(t_\epsilon))$, it holds for any $t = t_\epsilon + s \in (t_\epsilon, T)$ that
	\begin{align*}
	\frac{4\lambda}{\pi} \frac{s}{1+(\lambda \frac{s^2}{\epsilon})^2}(u_1 - u_0) + 1 + e(v_\epsilon(t_\epsilon + s)) &\le \frac{4\lambda t}{\pi }(u_1-u_0) + e(v_\epsilon(t_\epsilon)) + 1\\
	&\le \frac{4\lambda s}{\pi }(u_1-u_0) + f(t_\epsilon)
	\end{align*}
	By choosing $\lambda =  4^{-1} \gamma\pi(u_1-u_0)^{-1}$ we obtain that the right-hand side is smaller then $f(t)$. Hence by comparison, we have $v_\epsilon(t_\epsilon + \delta) \le u_\epsilon(t_\epsilon + \delta)$ for any $\delta > 0$. Fix any $\delta > 0$, then there is a subsequence such that $u_{\rm vis}(t_0 + 2\delta) = \lim_{\epsilon' \to 0} u_{\epsilon ' }(t_0 + 2\delta)$. Moreover, for $\epsilon' > 0$ small enough it holds $t_0 + 2\delta > t_{\epsilon'} + \delta$ and therefore
	\[
		u_{\rm vis}(t_0 + 2\delta) = \lim_{\epsilon' \to 0} u_{\epsilon'}(t_0 + 2\delta) \ge \limsup_{\epsilon' \to 0} u_{\epsilon'}(t_{\epsilon'} + \delta) \ge \limsup_{\epsilon' \to 0} v_{\epsilon'}(t_{\epsilon'} + \delta) = u_1
	\]
	Hence $u_{\rm vis}^*(t_0) = \inf_{\delta \to 0} u_{\rm vis}(t_0 + 2\delta) \ge u_1 > u_{\rm vis}(\overline{t})$. Due to the monotonicity of $u_{\rm vis}$, we reach a contradiction as $t_0 > \overline{t}$. Hence, $u_{\rm vis}(t) \not\in \{e^m - e \not=0\}$ for all $t \in I$.
	
\end{proof}

\begin{corollary}
	Assume that $f$ is strictly increasing with $f'\ge \gamma > 0$, then it holds
	\[
		u_{\rm vis} = V_*.
	\]
\end{corollary}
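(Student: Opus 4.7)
The plan is to establish the two inequalities $V_* \leq u_{\rm vis}$ and $u_{\rm vis} \leq V_*$ separately.

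For the first inequality, I would note that by Theorem \ref{thm:ex2_vis_approx} the function $u_{\rm vis}$ is lower-semicontinuous and is itself a discontinuous viscosity solution of \eqref{eq:ex2}, hence in particular a viscosity supersolution lying between the chosen admissible barriers $u$ and $v$. Thus $u_{\rm vis}$ belongs to the family whose infimum defines $V$, yielding $V \leq u_{\rm vis}$. Passing to the lower-semicontinuous envelope, which is a monotone operation, and using $(u_{\rm vis})_* = u_{\rm vis}$ immediately gives $V_* \leq u_{\rm vis}$.

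For the reverse inequality, I would first reduce to showing $u_\epsilon \leq \tilde v$ for every admissible viscosity supersolution $\tilde v$ of \eqref{eq:ex2} and every $\epsilon > 0$. Once this is known, $u_{\rm vis} = \sup_{\epsilon > 0} u_\epsilon \leq V$, and because $u_{\rm vis}$ is lower-semicontinuous while $V_*$ is by definition the largest lower-semicontinuous minorant of $V$, this forces $u_{\rm vis} \leq V_*$. The central step is thus to establish $u_\epsilon \leq \tilde v$. My idea is to pass to the non-decreasing envelope $\hat v(t) \coloneqq \inf_{s \geq t} \tilde v(s)$, which is non-decreasing and lower-semicontinuous by construction. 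I would then argue that $\hat v$ remains a viscosity supersolution of \eqref{eq:ex2}: at points $t$ with $\hat v(t) = \tilde v(t)$ any subjet of $\hat v$ is a subjet of $\tilde v$, so the inequality transfers directly; at points where $\hat v(t) < \tilde v(t)$, the envelope is locally constant on some interval $(t, s^*]$ with $s^*$ attaining the infimum, only the zero subjet appears, and the required inequality $1 + e(\hat v(t)) \geq f(t)$ follows by evaluating $\tilde v$'s supersolution property at $s^*$ (where $0$ lies in the subjet of $\tilde v$ by the local-minimum character of $s^*$) together with the monotonicity $f(s^*) \geq f(t)$. Because $\hat v$ is non-decreasing, the first coordinate of every subjet is non-negative, so adding $\epsilon b \geq 0$ preserves the supersolution inequality and shows that $\hat v$ is also a supersolution of the viscous equation \eqref{eq:ex2_vis}. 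Since the viscous equation enjoys full comparison (Theorem \ref{thm:comp} combined with Remark \ref{rmk:comp}, using that the extra $\epsilon u_t$ term makes the equation strictly monotone in the time derivative), comparison gives $u_\epsilon \leq \hat v \leq \tilde v$.

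The main obstacle is the verification that the non-decreasing envelope $\hat v$ remains a viscosity supersolution; the case analysis above is the heart of the argument and requires careful handling of the subjet structure of $\hat v$ at times where $\hat v < \tilde v$. A secondary technical point is ensuring that the initial value $\hat v(0)$ dominates $u_\epsilon(0) = 0$, which I would handle either by showing that admissible supersolutions $\tilde v$ must satisfy $\tilde v \geq 0$ in the relevant region (exploiting $|f(0)| \leq 1$, $e(0) = 0$, and the structure of $\partial R$) or by replacing the envelope with $\max(0, \inf_{s \geq t} \tilde v(s))$ and verifying that this truncation is still a supersolution on the relevant initial time window.
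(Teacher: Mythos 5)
Your approach to the hard inequality $u_{\rm vis} \le V_*$ is genuinely different from the paper's. The paper does not work at the level of individual admissible supersolutions at all; it exploits the \emph{second} assertion of Theorem~\ref{thm:ex2_vis_approx}, namely that $u_{\rm vis}$ is also a discontinuous viscosity solution of the monotone equation $\partial R(u_t)+e^m(u)\ni f$. It then observes that $V_*$ is a supersolution of that $e^m$-equation (since $e^m\ge e$), that the time-shift $w_\epsilon(t)=u_{\rm vis}^*(t-\epsilon)$ is a \emph{strict} subsolution of it (here strict monotonicity $f'\ge\gamma>0$ is essential), and applies the strict comparison Theorem~\ref{thm:comp} for that single monotone equation. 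Your route instead replaces a generic admissible supersolution $\tilde v$ by its non-decreasing lower envelope $\hat v(t)=\inf_{s\ge t}\tilde v(s)$, argues $\hat v$ is a supersolution of the viscous equation~\eqref{eq:ex2_vis}, and compares against $u_\epsilon$ there. This is a nice idea and it sidesteps the $e^m$ machinery entirely; if completed it would actually not seem to require strict monotonicity of $f$, which hints either at a stronger result or at a missing ingredient.

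There are, however, two genuine gaps. First, your case analysis at points with $\hat v(t)<\tilde v(t)$ assumes the infimum $\inf_{s\ge t}\tilde v(s)$ is attained at some interior $s^*\in(t,T)$; since $\tilde v\in LSC((0,T))$ is defined on an open interval, this infimum need not be attained (it can be approached only as $s\to T^-$), and then there is no local minimum of $\tilde v$ to invoke. This is fixable — e.g.\ work with $\hat v^{T'}(t)=\inf_{[t,T']}\tilde v$ for $T'<T$ and let $T'\uparrow T$, or use Ekeland to manufacture approximate subjets — but as written the step is incomplete. Second, and more seriously, the boundary issue you flag is not resolved by the fix you sketch: $\max(0,\hat v)$ is the pointwise \emph{maximum} of two lower-semicontinuous functions, and the maximum of supersolutions is not in general a supersolution (only the infimum is; cf.\ Lemma~\ref{lem:sup_subsolutions} and its dual). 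Moreover $0$ itself is only a supersolution of~\eqref{eq:ex2} on the set $\{f\le 1\}$, so even on that region one would still need to verify the viscosity inequality of $\max(0,\hat v)$ by hand at the contact set. You would need a different device here — for instance, first establishing that any admissible supersolution dominated from below by the chosen barrier $u$ must in fact be non-negative, which is exactly the kind of a priori information the paper's route (shifting $u_{\rm vis}$, which is manifestly non-negative, rather than modifying $\tilde v$) obtains for free.
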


\begin{proof}
	First of all, we show that $u_{\rm vis}$ is a viscosity supersolution to \eqref{eq:ex2}. This follows from the first part of Theorem \ref{thm:ex2_vis_approx} and hence it holds that
	\[
		V \le u_{\rm vis}.
	\]
	For the other inequality note that $V_*$ is a viscosity supersolution to
	\begin{equation*}
		\partial R(u_t) + e^m(u) \ni f , \text{ in } (0, T),
	\end{equation*}
	with $e^m$ as defined in \eqref{eq:mon_env_upper}. To see this, take any $t\in I$ and any $b \in \mathcal{P}^{1, -} V_*(t)$, then there is a $\nu \in \partial R(b)$ such that
	\[
		\nu + e^m(V_*(t)) \ge \nu + e(V_*(t)) \ge f(t).
	\]
	Moreover, the function $w_\epsilon(t) \coloneqq u_{\rm vis}^*(t-\epsilon)$ is a strict subsolution to the above equation due to the second part of Theorem \ref{thm:ex2_vis_approx} and the strict monotonicity of $f$. This allows us to invoke the strict comparison principle, Theorem \ref{thm:comp}, and we obtain for all $t\in I$ that $w(t-\epsilon) \le V_*(t)$ and in the $\epsilon \to 0$ limit we obtain $w_*(t) \le V_*(t)$ which is equivalent to $u_{\rm vis}(t) \le V_*(t)$ and the statement follows.
	
\end{proof}

\subsection{Minimizing Movements}
We will now construct a minimizing movement solution \cite{Mielke2015} which is maximal among all minimizing movement solutions. In this way, we guarantee that the solution jumps as soon as possible. Hence, let us take a partition $0 = t_0 < t_1 < \dots < t_{N-1} < t_N = T$ and define
\begin{align*}
	q^0 &\coloneqq 0,  \\
	q^k &\coloneqq \sup\{q \;|\; q \in \arg\min_{\tilde{q}\in \R} \{ E(\tilde{q}) - f(t_k)\tilde{q} + |\tilde{q}-q^{k-1}| \} \},
\end{align*}
with $f$ as above, i.e., increasing and $f(0) \in [0, 1]$, and $E(a) \coloneqq \int_{0}^{a} e(s) \;\mathrm{d}s$, note that $E \ge 0$.
\begin{lemma}
	As $f$ is increasing, we have $q^k \ge q^{k-1}$ and $q^k = 0$ while $f(t_k) \le 1$.
	
	Moreover, if $f$ is strictly increasing then for all $k \ge k_0$, where $k_0$ is the first $k$ such that $f(t_k) > 1$, we have $q^k > q^{k-1}$.
\end{lemma}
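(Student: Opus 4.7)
The plan is to establish the three assertions in order, using the first-order structure of the incremental minimization. Abbreviate $\Phi_k(\tilde q) \coloneqq E(\tilde q) - f(t_k)\tilde q + |\tilde q - q^{k-1}|$, so that $q^k$ is the supremum of minimizers of $\Phi_k$. From $e(r) = 0$ iff $r = 0$, continuity of $e$, and $e(\pm\infty) = \pm\infty$, the intermediate value theorem forces $e > 0$ on $(0,\infty)$ and $e < 0$ on $(-\infty,0)$; in particular $E(r) > 0$ for every $r \ne 0$. The monotonicity $q^k \ge q^{k-1}$ I prove by induction, the base case $k = 1$ being immediate: for $\tilde q < 0$ one has $\Phi_1(\tilde q) = E(\tilde q) - (1 + f(t_1))\tilde q > 0 = \Phi_1(0)$, using $1 + f(t_1) \ge 1 + f(0) \ge 0$. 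For the inductive step, assume $q^k < q^{k-1}$ for contradiction. Adding the optimality inequalities $\Phi_k(q^k) \le \Phi_k(q^{k-1})$ and $\Phi_{k-1}(q^{k-1}) \le \Phi_{k-1}(q^k)$, the $E$-contributions cancel and the result simplifies to
\[
(f(t_k) - f(t_{k-1}))(q^{k-1} - q^k) + |q^k - q^{k-1}| + |q^{k-1} - q^{k-2}| - |q^k - q^{k-2}| \le 0.
\]
Both summands are non-negative (by $f$ increasing, $q^{k-1} > q^k$, and the triangle inequality), so each vanishes. The first forces $f(t_k) = f(t_{k-1})$, and the second, combined with the induction hypothesis $q^{k-2} \le q^{k-1}$, forces $q^{k-2} = q^{k-1}$. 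Then $\Phi_k \equiv \Phi_{k-1}$, their minimizer sets coincide, and taking suprema yields $q^k = q^{k-1}$, a contradiction.

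For the second claim, I induct with the inductive assumption $q^{k-1} = 0$ and $f(t_k) \le 1$. Splitting $\Phi_k(\tilde q) = E(\tilde q) - f(t_k)\tilde q + |\tilde q|$ by the sign of $\tilde q$: for $\tilde q > 0$, $\Phi_k(\tilde q) = E(\tilde q) + (1 - f(t_k))\tilde q > 0 = \Phi_k(0)$, and for $\tilde q < 0$, $\Phi_k(\tilde q) = E(\tilde q) - (1 + f(t_k))\tilde q > 0$. Hence $0$ is the unique minimizer and $q^k = 0$.

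For the final claim, assume $f$ is strictly increasing and induct on $k \ge k_0$. At $k = k_0$, the second part gives $q^{k_0 - 1} = 0$; the right derivative of $\Phi_{k_0}$ at $0$ equals $e(0) + 1 - f(t_{k_0}) = 1 - f(t_{k_0}) < 0$, so $0$ is not a minimizer and $q^{k_0} > 0$. For $k > k_0$, the inductive hypothesis $q^{k-1} > q^{k-2}$ makes $\Phi_{k-1}$ differentiable at $q^{k-1}$ with derivative $e(q^{k-1}) - f(t_{k-1}) + 1$, which must vanish at the minimum, giving $e(q^{k-1}) = f(t_{k-1}) - 1$. Then the right derivative of $\Phi_k$ at $q^{k-1}$ equals $e(q^{k-1}) - f(t_k) + 1 = f(t_{k-1}) - f(t_k) < 0$, so $q^{k-1}$ is not a minimizer of $\Phi_k$, and the first part yields $q^k > q^{k-1}$. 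The main subtlety I anticipate is in the monotonicity step: the summed optimality inequality does not by itself produce a contradiction but only forces equality in two non-negative terms; the conclusion then rests on exploiting that $q^k$ is defined as the \emph{supremum} of the minimizer set, which upgrades the resulting identity $\Phi_k \equiv \Phi_{k-1}$ into the required contradiction.
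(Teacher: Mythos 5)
Your proof is correct and follows essentially the same route as the paper's (compare the incremental optimality inequalities, use positivity of $E$, use a one-sided derivative/quotient argument at the current minimizer for the strict-increase claim), but there are two places where you go beyond the paper's presentation in a genuinely useful way.

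First, for the monotonicity $q^k\ge q^{k-1}$: the paper adds the two optimality inequalities, drops the triangle-inequality terms, and lands on $(f(t_{k-1})-f(t_k))q^k \le (f(t_{k-1})-f(t_k))q^{k-1}$. It then divides by $f(t_{k-1})-f(t_k)\le 0$, but this step is vacuous when $f(t_{k-1})=f(t_k)$ (which is allowed, since $f$ is only increasing, not strictly). You keep the triangle-inequality terms, observe that \emph{both} non-negative pieces must vanish, and then exploit that $q^k$ is defined as the \emph{supremum} of the minimizer set: equality in the absolute-value terms together with the induction hypothesis forces $q^{k-2}=q^{k-1}$, whence $\Phi_k\equiv\Phi_{k-1}$ and $q^k=q^{k-1}$, contradicting the assumed strict drop. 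This closes the degenerate case that the paper quietly skips.

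Second, for the strict increase when $f'\ge\gamma>0$: the paper makes a quantitative finite-difference estimate near $q^{k-1}$ using a local Lipschitz constant $L$ of $e$ (and its version of the optimality identity $e(q^{k-1})-f(t_{k-1})+1=0$ has a typo: an extraneous factor $q^{k-1}$). You instead compute the one-sided derivative of $\Phi_k$ at $q^{k-1}$, which is cleaner and bypasses the Lipschitz estimate entirely; combined with the monotonicity from part one and the compactness of the minimizer set (so that $q^k$ is itself a minimizer), this gives $q^k>q^{k-1}$ directly.

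One minor point worth flagging: in the $\tilde q<0$ branch of both the base case and the second claim you need $1+f(t_k)\ge 0$. This holds because the paper's standing hypothesis for this section is $f(0)\in[0,1]$, so you are safe, but the argument would break under the weaker-looking $|f(0)|\le 1$ stated earlier in \eqref{eq:ex2} if $E$ could vanish off the origin; since you correctly derive $E>0$ away from $0$, you are fine either way. Overall this is a tightened version of the paper's own proof, and the fix in the monotonicity step is a real improvement.
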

\begin{proof}
	First of all, we have that
	\[
		E(q^1) - f(t_1)q^1 + |q^1| \le 0
	\]
	and as $E$ is positive $q^1$ has to be positive. Moreover, it holds for $k \ge 1$ that
	\begin{align*}
		E(q^k) - f(t_k)q^k + |q^k - q^{k-1}| 
		&\le E(q^{k-1}) - f(t_k)q^{k-1} \\
		&\le E(q^{k}) - f(t_{k-1})q^{k} + (f(t_{k-1}) - f(t_k))q^{k-1} \\&~~+ |q^k - q^{k-2}| - |q^{k-1} - q^{k-2}|.
	\end{align*}
	Hence, we obtain
	\[
		(f(t_{k-1}) - f(t_k))q^k \le (f(t_{k-1}) - f(t_k))q^{k-1}
	\]
	and as $f(t_{k-1}) - f(t_k) \le 0$, we conclude that $q^k \ge q^{k-1}$.

	If $f(t_1) \le 1$, then for any $\tilde{q} > 0$ we have $E(\tilde{q})-f(t_1)\tilde{q} + |\tilde{q}| \ge E(\tilde{q}) - \tilde{q} +\tilde{q} = E(\tilde{q}) > 0$, as $e$ is increasing around $0$. This shows that $0$ is the only minimizer. Now, we can iterate this argument to see that $q^k = 0$ while $f(t_k) \le 1$.
	
	The final statement can again be proven using induction. First, we show that $q^{k_0} > q^{k_0-1} = 0$. As $f(t_0) = 1+\epsilon$ for some $\epsilon > 0$, we obtain for all $\delta > 0$ that
	\[
		E(\delta) - f(t_0)\delta + \delta = \int_{0}^{\delta} e(s) \;\mathrm{d}s - \epsilon \delta \le \tfrac{L}{2}\delta^2 -\epsilon \delta,
	\]
	where $L=L(0) > 0$ is the Lipschitz-constant of $e$ around $0$. Now, we can choose $\delta < \frac{2\epsilon}{L}$ and obtain
	\[
		E(\delta) - f(t_0)\delta + \delta < 0
	\]
	and hence $0$ cannot be a minimizer which implies $q^{k_0} > 0$. Now, take any $k \ge k_0$ and assume that $q^{k-1} > q^{k-2} > \dots > q^{k_0} > 0$.
	Again, we have for any $\delta > 0$ that
	\begin{align*}
		E(q^{k}) - f(t_{k})q^{k} + (q^{k}-q^{k-1}) &\le E(q^{k-1}+\delta) - f(t_{k})(q^{k-1}+\delta) + \delta \\
		&= E(q^{k-1}) + \int_{q^{k-1}}^{q^{k-1}+\delta} e(s) \;\mathrm{d}s - f(t_{k})(q^{k-1}+\delta) + \delta, \\
		&\le E(q^{k-1}) + \delta e(q^{k-1}) + \tfrac{L}{2}\delta^2 - f(t_{k})(q^{k-1}+\delta) + \delta,
	\end{align*}
	where $L=L(q^{k-1})$ is the Lipschitz-constant of $e$ in some interval around $q^{k-1}$, say $[q^{k-1}-1, q^{k-1}+1]$. We reorder the inequality and use $E(q^k) - E(q^{k-1}) \ge 0$ to obtain 
	\[
		(1-f(t_{k})) (q^k-q^{k-1}) \le \delta (e(q^{k-1})- f(t_{k})q^{k-1} + 1 +  \tfrac{L}{2}\delta).
	\]
	Again, we rewrite $f(t_k) = f(t_{k-1}) + \epsilon$ and we use that $e(q^{k-1}) - f(t_{k-1})q^{k-1} + 1 = 0$ as $q^{k-1}$ is a minimizer and $q^{k-1} > q^{k-2}$ to conclude
	\[
		(1-f(t_{k})) (q^k-q^{k-1}) \le \delta (- \epsilon q^{k-1} +  \tfrac{L}{2}\delta).
	\]
	Finally, we can choose $\delta$ so small enough so that the right hand-side becomes negative and then divide by $(1-f(t_k))<0$ in order to show that
	\[
		q^k-q^{k-1} > 0.
	\]
\end{proof}

As viscosity solutions are not defined on discrete time-points, we have to consider some interpolation of the discrete values. Let us therefore define the interpolant
\begin{align*}
	q^N(t) \coloneqq q^{k-1} \text{ for } t \in [t_{k-1}, t_k).
\end{align*}

\begin{lemma}\label{lem:mm_inc}
	The interpolant $q^N$ is a discontinuous viscosity solution of
	\begin{align*}
		\partial R(u_t) + e(u) \ni f^N,
	\end{align*}
	where $f^N(t) \coloneqq f(t_{k-1})$ for $t \in [t_{k-1}, t_k)$.
	Moreover, if $f$ is strictly increasing then $q^N$ is also a discontinuous viscosity solution of 
	\begin{align*}
	\partial R(u_t) + e_m(u) \ni f^N.
	\end{align*}
\end{lemma}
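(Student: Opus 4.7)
The plan is to exploit the piecewise-constant, non-decreasing structure of $q^N$, compute its first-order parabolic jets at every time, and then match the viscosity inequalities to the Euler--Lagrange conditions of each time-discrete minimization.

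First, I would compute the semicontinuous envelopes: since $q^N$ is right-continuous with upward jumps at the grid times, $(q^N)^* = q^N$, whereas $(q^N)_*(t_k) = q^{k-1}$ (the smaller left limit). On any open interval $(t_{k-1},t_k)$ both envelopes are locally constant, so a short argument shows $\mathcal{P}^{1,+}(q^N)^*(t^*) = \{0\} = \mathcal{P}^{1,-}(q^N)_*(t^*)$. At a grid time $t_k$ with $q^k > q^{k-1}$, comparing the two one-sided asymptotic expansions yields $\mathcal{P}^{1,+}(q^N)^*(t_k) = \mathcal{P}^{1,-}(q^N)_*(t_k) = [0,\infty)$, while if $q^k = q^{k-1}$ both jets reduce to $\{0\}$ again. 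First-order optimality for $q^k$ together with $q^k \ge q^{k-1}$ reads $f(t_k) - e(q^k) \in \{1\}$ if $q^k > q^{k-1}$ and $f(t_k) - e(q^k) \in [-1,1]$ if $q^k = q^{k-1}$.

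Second, to verify the viscosity inequalities for the $e$-equation, I would handle interior times and jump times separately. At any $t^* \in (t_{k-1},t_k)$ both conditions reduce, via the only admissible slope $a = b = 0$ with $\partial R(0) = [-1,1]$, to the Euler--Lagrange information at step $k-1$ and are realized by $\nu = -1$ (or any valid element of $[-1,1]$) for the subsolution and $\mu = 1$ for the supersolution. At a jump time $t_k$ with $q^k > q^{k-1}$, the subsolution inequality at $(q^N)^*(t_k) = q^k$ follows from the equality $1 + e(q^k) = f(t_k)$ with $\nu = 1$, and the supersolution inequality at $(q^N)_*(t_k) = q^{k-1}$ follows from $1 + e(q^{k-1}) \ge f(t_{k-1}) = (f^N)_*(t_k)$ with $\mu = 1$; this completes the first assertion.

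For the $e_m$-equation under strict monotonicity of $f$, the subsolution inequality is automatic because $e_m \le e$ pointwise. The supersolution inequality, by contrast, demands $e_m(q^{k-1}) \ge f(t_{k-1}) - 1$ for every index $k$. Before the first nontrivial step I have $q^{k-1} = 0$, and the structural hypotheses $e > 0$ on $(0,\infty)$ and $e(0) = 0$ give $e_m(0) = \inf_{y \ge 0} e(y) = 0$, which together with $f(t_{k-1}) \le 1$ from the previous lemma covers the early steps. For $k \ge k_0$, strict monotonicity of $f$ and the previous lemma force $q^{k-1} > q^{k-2}$, Euler--Lagrange gives $e(q^{k-1}) = f(t_{k-1}) - 1$, and the remaining task reduces to the identity $e_m(q^{k-1}) = e(q^{k-1})$. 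The hard part is precisely this identification: I would deduce from the maximality of $q^{k-1}$ as the supremum of the argmin, which yields $\int_{q^{k-1}}^{y}(e(s) - e(q^{k-1}))\,ds > 0$ for every $y > q^{k-1}$, together with the representation $e_m(q^{k-1}) = \inf_{y \ge q^{k-1}} e(y)$, that $q^{k-1}$ must belong to $\{e_m = e\}$; once this is established, the supersolution inequality is immediate and the jet analysis from the first part closes the proof.
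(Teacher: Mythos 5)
The first part of your argument, for the $e$-equation, follows the paper's route exactly: you compute the first-order parabolic jets of $(q^N)^*$ and $(q^N)_*$ at interior and grid times, read off the discrete optimality conditions (the Euler--Lagrange inclusion when $q^k=q^{k-1}$ and the equality $1+e(q^k)=f(t_k)$ when $q^k>q^{k-1}$), and match them to the viscosity inequalities; this is correct and essentially identical to the paper's proof.

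The gap is in the final inference of the second part. You want to deduce $e_m(q^{k-1})=e(q^{k-1})$, i.e.\ $\inf_{y\ge q^{k-1}}e(y)=e(q^{k-1})$, from the averaged one-sided optimality condition
\[
\int_{q^{k-1}}^{y}\bigl(e(s)-e(q^{k-1})\bigr)\,ds>0 \quad\text{for every } y>q^{k-1},
\]
which you correctly extract from the maximality of $q^{k-1}$ in the argmin. But this implication is false: the running integral can remain strictly positive for all $y$ even if $e$ dips strictly below the level $e(q^{k-1})$ on some interval to the right, because an earlier positive excursion can overcompensate a later shallow dip. In that situation $q^{k-1}$ really is the unique (hence maximal) minimizer, your integral inequality holds, and yet $\inf_{y\ge q^{k-1}}e(y)<e(q^{k-1})$, so the $e_m$-supersolution inequality $1+e_m(q^{k-1})\ge f(t_{k-1})=1+e(q^{k-1})$ fails. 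The paper does not argue via the integral inequality at all; it instead invokes the stronger pointwise characterization that $q^k$ is the \emph{largest} solution of $e(\cdot)=f(t_k)-1$, i.e.\ $q^k=\max e^{-1}(f(t_k)-1)$. Combined with coercivity of $e$, this is precisely what prevents $e$ from re-descending below the level $e(q^k)$ on $(q^k,\infty)$ and hence places $q^k$ in the contact set $\{e=e_m\}$. To close your proof you would need to establish this sharper property of the maximal minimizer, not merely the averaged condition you wrote down.
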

\begin{proof}
	As $q^k \in \arg\min_{\tilde{q}\in \R} \{ E(\tilde{q}) - f(t_k)\tilde{q} + |\tilde{q}-q^{k-1}| \}$, it follows by the properties of subdifferentials that
	\begin{align}
	\label{eq:proof_mm_incl} 0 &\in [-1 + e(q^k) - f(t_k), 1+ e(q^k) - f(t_k)]&\text{ if } q^k = q^{k-1}, \\
	\label{eq:proof_mm_eq} 0 &= 1 + e(q^k) - f(t_k) & \text{ if } q^k > q^{k-1},
	\end{align}
	for all $k \in 1, \dots, N$.
	
	Whenever, $t\in I$ is a continuity point of $q^N$ then $\mathcal{P}^{1, +}q^N(t) = \mathcal{P}^{1, -}q^N(t)= \{0\}$ and due to the inclusion \eqref{eq:proof_mm_incl}, $q^N$ satisfies the sub- and supersolution inequalities. Let us now look at the jump points, i.e., $q^{k} = (q^N)^*(t) > (q^N)_*(t) = q^{k-1}$. In this case it holds $\mathcal{P}^{1, +}(q^N)^*(t) = \mathcal{P}^{1, -}(q^N)_*(t)= [0, \infty)$. Hence, we obtain for all $a \in \mathcal{P}^{1, +}(q^N)^*(t)$ with \eqref{eq:proof_mm_eq} that
	\begin{align*}
		\partial R(a) + e((q^N)^*(t)) &\ni 1 + e(q^{k}) = f(t_k) = f^N(t) \le (f^N)^*(t),
	\end{align*}
	i.e., $(q^N)^*$ is a viscosity subsolution. On the other hand, for all $b \in \mathcal{P}^{1, -}(q^N)_*(t)$ we have
	\begin{align*}
	\partial R(b) + e((q^N)_*(t)) &\ni 1 + e(q^{k-1}) \ge f(t_{k-1}) = (f^N)_*(t),
	\end{align*}
	and therefore $(q^N)_*$ is a viscosity supersolution and the function $q^N$ is a discontinuous viscosity solution.
	
	Hence, the first statement holds. As $q^k$ is chosen to be the maximal value of the minimizers, the function $e$ in the equations above can be replaced by $e_m$. Indeed, as long as $t_k < t_{k_0}$, we have $q^k = 0$ and by assumption we also have that $e(0) = e_m(0)$. For any, $t_k \ge t_{k_0}$ it holds due to Lemma \ref{lem:mm_inc} that
	\[
		e(q^k)-f(t_k) + 1 = 0,
	\]
	and by the definition of $q^k$ this is equivalent to $q^k = \max\{e^{-1}(-1+f(t_k)) \}$ (note that we used the coercivity of $e$ here). However, this and the local Lipschitz-continuity of $e$ imply that there is a monotone function that lies below $e$ and touches $e$ at $q^k$ and hence by the definition of $e_m$ it follows that $e(q^k) = e_m(q^k)$,
\end{proof}

Now, be refining the partition we can analyze the limit $N \to \infty$. Due to the stability result \ref{thm:stability}, the function $u_{\rm mm} \coloneqq \hliminf_{N\to\infty}q^N$ is a viscosity supersolution of
\begin{align*}
	\partial R(u_t) + e_m(u) \ni f.
\end{align*}

\begin{theorem}
	If $f$ is strictly increasing, then $u_{\rm mm} = U_*$, where $U$ is the greater of the Perron solutions of \eqref{eq:ex2}.
\end{theorem}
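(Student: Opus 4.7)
I would prove the two inequalities $u_{\rm mm} \leq U_*$ and $U_* \leq u_{\rm mm}$ separately, mirroring the template used in the previous subsection for the vanishing-viscosity limit.

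For $u_{\rm mm} \leq U_*$, Lemma \ref{lem:mm_inc} tells us that each $q^N$ is a discontinuous viscosity solution of $\partial R(u_t) + e(u) \ni f^N$ with $f^N \to f$ locally uniformly as $N\to\infty$. Applying the stability theorem (Theorem \ref{thm:stability}), $\overline q := \hlimsup_{N \to \infty} q^N$ is a viscosity subsolution of $\partial R(u_t) + e(u) \ni f$. Since the $q^N$ are sandwiched between the rapidly decreasing subsolution $u$ and the rapidly increasing supersolution $v$ used in the Perron construction, so is $\overline q$, and the definition \eqref{eq:greater_perron} of $U$ as the pointwise supremum over all such subsolutions forces $\overline q \leq U$. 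Hence $u_{\rm mm} = \hliminf_N q^N \leq \hlimsup_N q^N \leq U$, and since $u_{\rm mm}$ is already lower semicontinuous, taking lsc envelopes yields $u_{\rm mm} \leq U_*$.

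For the reverse inequality I would use a time-shifted strict subsolution built from $U^*$ together with strict comparison. Because $e_m \leq e$, the subsolution property of $U^*$ for the $e$-equation immediately implies the same property for the monotone $e_m$-equation. For $\epsilon > 0$, set
\[
w_\epsilon(t) := U^*(t-\epsilon), \qquad t \in (\epsilon, T).
\]
Using $f' \geq \gamma$ one obtains $\mu + e_m(w_\epsilon(t)) \leq \mu + e(U^*(t-\epsilon)) \leq f(t-\epsilon) \leq f(t) - \gamma\epsilon$ at every jet point, so $w_\epsilon$ is a $\gamma\epsilon$-strict subsolution of $\partial R(u_t) + e_m(u) \ni f$ on $(\epsilon, T)$. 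By Theorem \ref{thm:stability} applied to Lemma \ref{lem:mm_inc}, $u_{\rm mm}$ is a supersolution of the same $e_m$-equation, and $w_\epsilon(\epsilon) = U^*(0) = 0 = u_{\rm mm}(0) \leq u_{\rm mm}(\epsilon)$ since $u_{\rm mm}$ inherits monotonicity from the $q^N$. The strict comparison principle (Theorem \ref{thm:comp}, applicable here because the $e_m$-equation is monotone in $u$ and the ODE setting trivializes the spatial hypotheses) then gives $U^*(t-\epsilon) \leq u_{\rm mm}(t)$ on $(\epsilon, T)$. To pass to $\epsilon \to 0^+$ I would first establish that $U$ is monotone increasing: for any subsolution $\tilde u$ between $u$ and $v$, the monotone hull $\hat u(t) := \sup_{s \leq t} \tilde u(s)$ is again a subsolution bounded between $u$ and $v$, because at plateau points the admissible jet slope is $a=0$ and $\partial R(0) = [-1,1]$ provides enough slack to transfer the subsolution inequality from the maximizer of $\tilde u$ to $\hat u$ (using also that $f$ is increasing). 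By the maximality in \eqref{eq:greater_perron}, $U$ then equals its own monotone hull, so $U^*$ and $U_*$ are nondecreasing; in particular $U^*$ is right-continuous and $U_*$ is left-continuous. Combining $U_*(t) = U_*(t^-) \leq U^*(t^-) = \lim_{\epsilon \to 0^+} U^*(t-\epsilon)$ with the comparison bound gives $U_*(t) \leq u_{\rm mm}(t)$ on $(0,T)$, which closes the argument.

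The main obstacle I anticipate is this monotonicity lemma for the Perron subsolution class: carefully verifying that the monotone hull of a subsolution is again a subsolution requires a case analysis of test functions touching from above at plateau points, and is the one step that genuinely goes beyond mirroring the vanishing-viscosity corollary.
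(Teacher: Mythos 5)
Your proof follows the same high-level strategy as the paper for both inequalities: $u_{\rm mm} \le U_*$ by recognizing each $q^N$ as a subsolution of the $e$-equation sandwiched between $u$ and $v$, and $U_* \le u_{\rm mm}$ by a small time shift that turns one of the two functions into a strict sub- or supersolution of the monotone $e_m$-equation, followed by strict comparison. The substance is correct. However, the detour through the ``monotonicity lemma'' for $U$ --- which you yourself flag as the delicate step --- is unnecessary, and this matters because establishing it (that the monotone hull of a Perron subsolution is again a subsolution) would require a careful case analysis that your sketch leaves genuinely open.

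Once you have $U^*(t-\epsilon) \le u_{\rm mm}(t)$ for every $t \in (\epsilon, T)$ and every $\epsilon > 0$, the limit $\epsilon \to 0^+$ requires no regularity of $U$ beyond what the lower-semicontinuous envelope provides by definition. For any fixed $\hat t \in (0,T)$,
\[
U_*(\hat t) \;=\; \liminf_{s\to \hat t} U(s) \;\le\; \liminf_{s\to \hat t^-} U(s) \;\le\; \liminf_{\epsilon\to 0^+} U^*(\hat t - \epsilon) \;\le\; u_{\rm mm}(\hat t),
\]
where the first inequality holds because restricting the liminf to left approach can only increase it, the second uses $U \le U^*$, and the third is your comparison bound. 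Hence $U_* \le u_{\rm mm}$ with no monotonicity of $U$ needed.

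The paper sidesteps the issue entirely by applying the shift to $u_{\rm mm}$ rather than $U$: it sets $h(t) := u_{\rm mm}(t+\epsilon)$, observes that $h$ is a strict supersolution of the $e_m$-equation (using $f$ strictly increasing), notes that $U$ is a subsolution of that same equation since $e_m \le e$, and then comparison gives $U(t) \le u_{\rm mm}(t+\epsilon)$; the same reparametrization argument as above yields $U_* \le u_{\rm mm}$. Since $u_{\rm mm}$ is already known to be increasing (it is a half-relaxed limit of the monotone step functions $q^N$), this direction of shift never invites the question of whether $U$ itself is monotone. Both arguments are valid; yours is correct after pruning the unneeded lemma, while the paper's choice of which function to shift keeps the proof entirely free of that complication. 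Also note that for $u_{\rm mm}\le U_*$ the appeal to the stability theorem is superfluous: $q^N \le U$ already holds pointwise because each $q^N$ is an admissible Perron competitor, and then $u_{\rm mm}=\hliminf_N q^N \le U_*$ follows directly from the definition of the half-relaxed liminf.
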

\begin{proof}
	First of all, we notice that $q^N$ is a viscosity subsolution of \eqref{eq:ex2} as $f$ is increasing, i.e.,
	\[
		\partial R(q^N_t) + e(q) \le f^N \le f.
	\]
	Therefore $q^N \le U$ and hence $u_{\rm mm} \le U_*$. One the other hand, if we set $h(t) \coloneqq u_{\rm mm}(t+\epsilon)$ for some $\epsilon > 0$ then
	\[
		\partial R(h_t) + e_m(h) \ni f(t+\epsilon) > f(t),
	\]
	i.e., $h$ is strict supersolution to this equation and $U$ is a subsolution as
	\[
		\partial R(U_t) + e_m(U) \le \partial R(U_t) + e(U) \le f(t).
	\]
	The comparison principle \ref{thm:comp} allows us to conclude that $U(t) \le u_{\rm mm}(t+\epsilon)$. The asserted statement follows by taking the limit as $\epsilon \to 0$.
\end{proof}

\subsection{Hysteresis}\label{ssec:hysteresis}
\begin{figure}
	\begin{subfigure}{0.45\textwidth}
		\includegraphics[width=\textwidth]{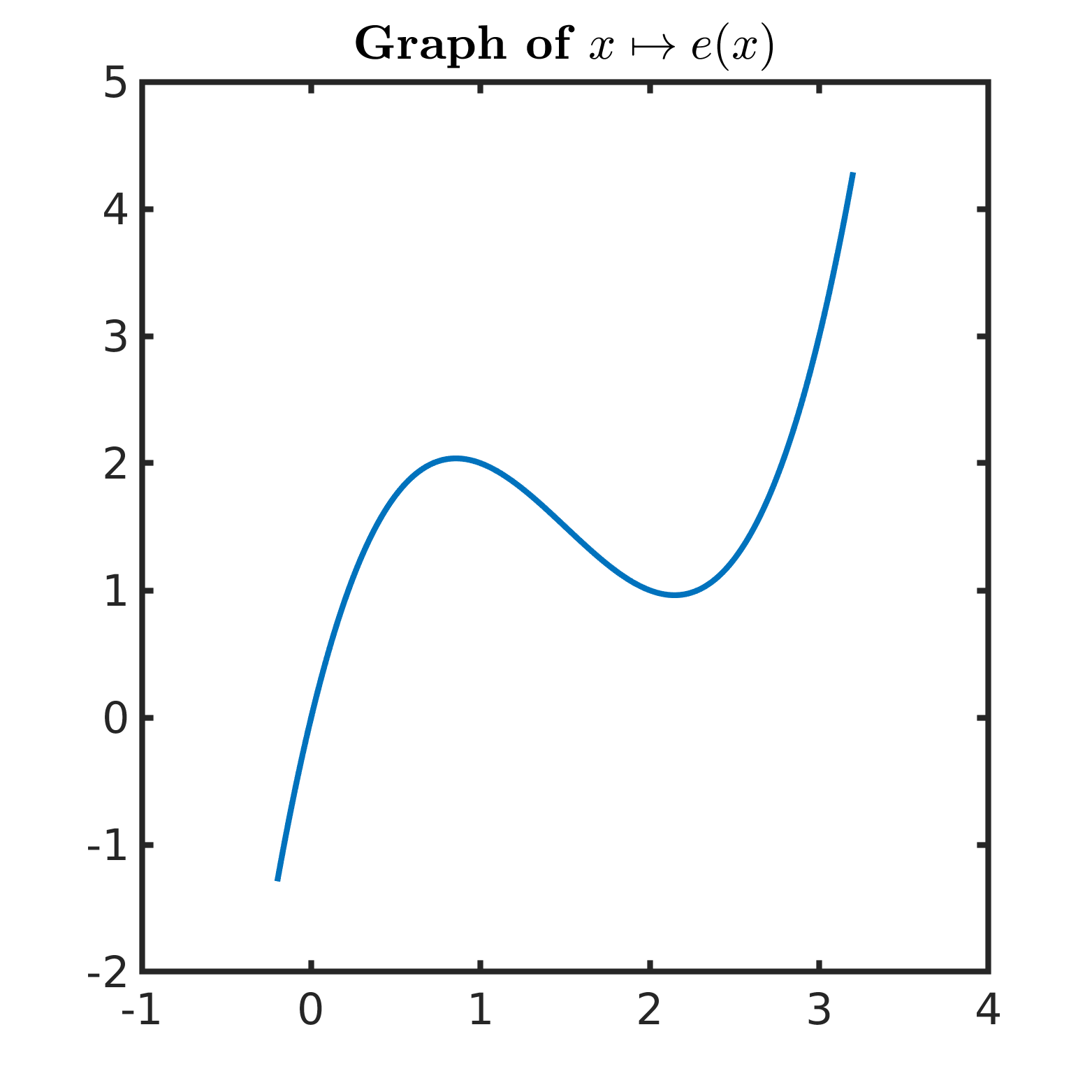}
	\end{subfigure}
	\begin{subfigure}{0.45\textwidth}
		\includegraphics[width=\textwidth]{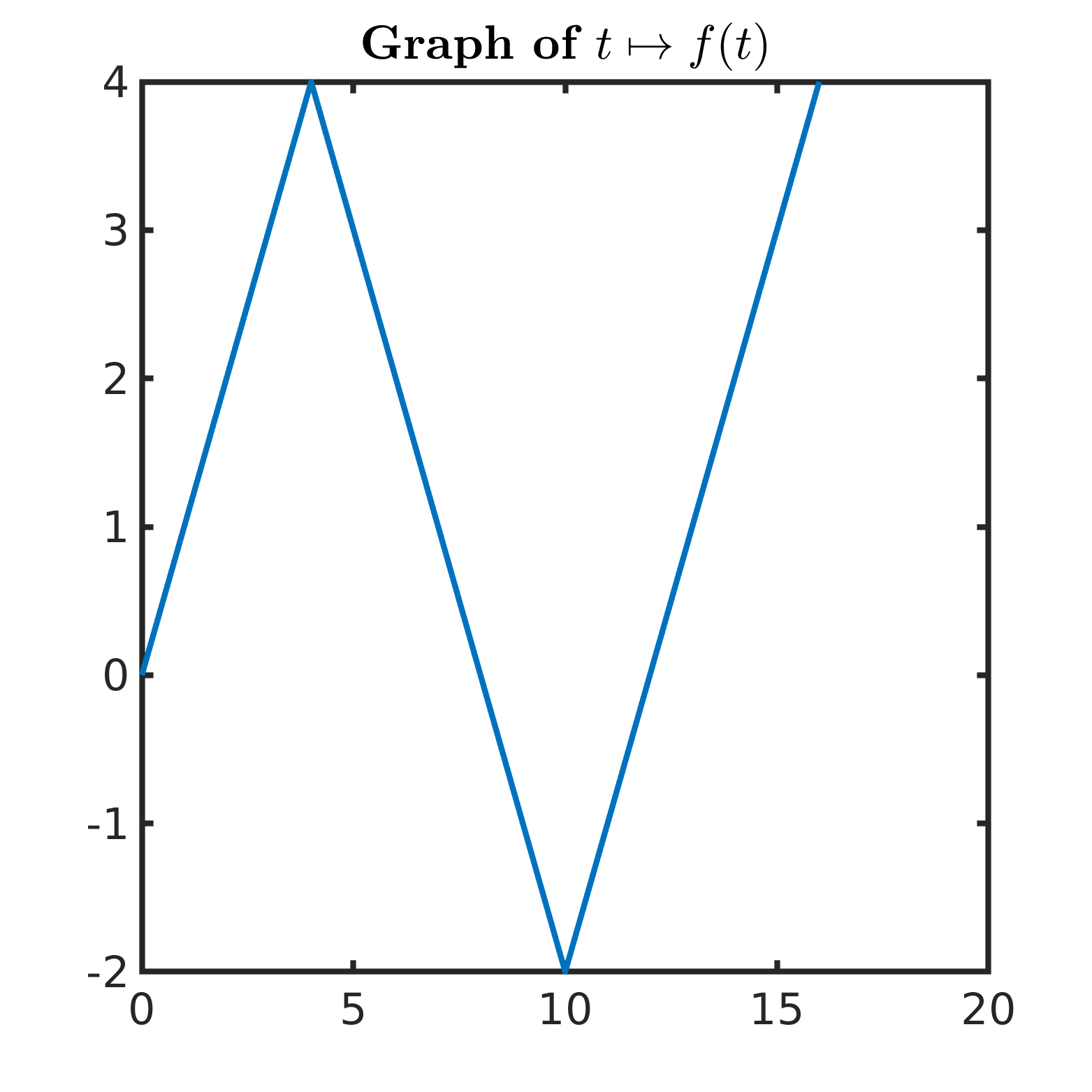}
	\end{subfigure}
	\begin{subfigure}{0.45\textwidth}
		\includegraphics[width=\textwidth]{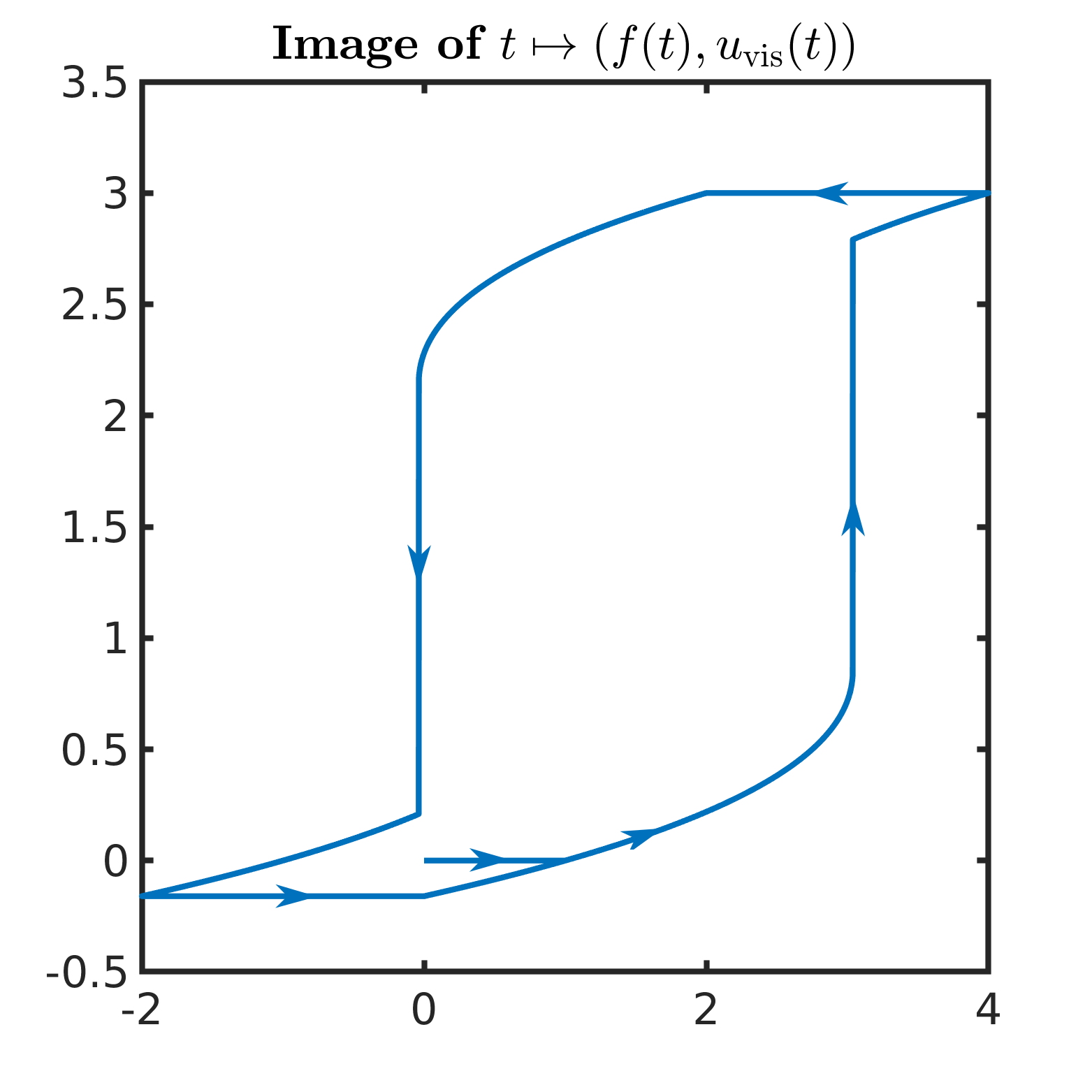}
	\end{subfigure}
	\begin{subfigure}{0.45\textwidth}
		\includegraphics[width=\textwidth]{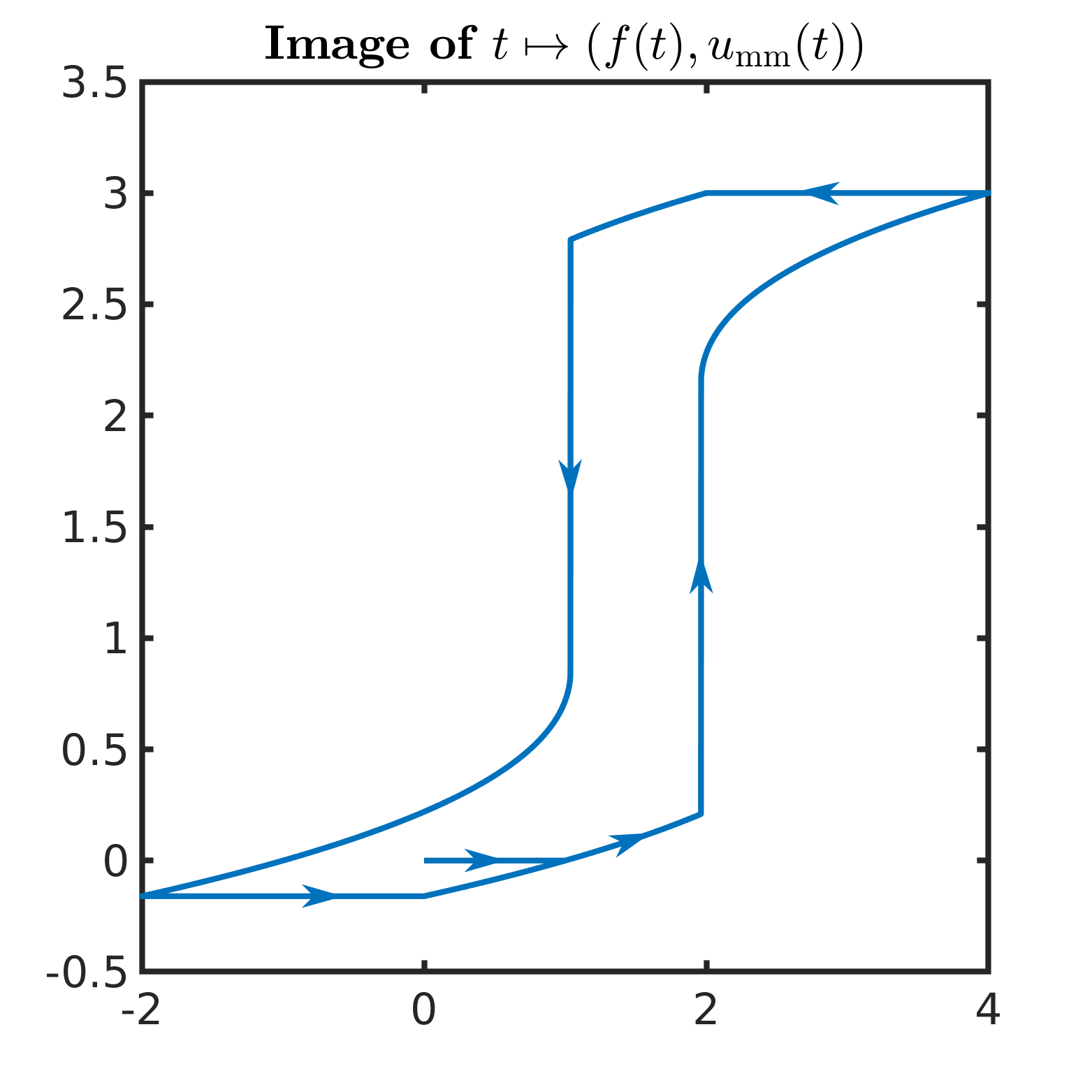}
	\end{subfigure}
	\caption{The top figures show the graph of the functions $e$ and $f$ from subsection \ref{ssec:hysteresis}. The curves $t \mapsto (f(t), u_{\rm vis}(t))$ and $t \mapsto (f(t), u_{\rm mm}(t))$ in the lower two figures show the emergence of a rate-independent hysteresis loop which is due to the non-monotone $e$. The arrows in the bottom right and bottom left figure show the evolution in time.}\label{fig:hysteresis}
\end{figure}

We have characterized the Perron solutions under an increasing loading $f$. If we assume that $f$ is decreasing then the previously established correspondence changes, i.e., $U$ corresponds to the vanishing viscosity solution and $V$ corresponds to the maximal minimizing movement solution. 

To see the emergence of a rate-independent hysteresis loop, we assume that $f$ is a periodic loading that changes direction outside of the set 
$$[ \min\{e^m-e_m \not= 0\} - 1, \max\{e^m-e_m \not= 0\} + 1].$$
In this case, when $f$ changes direction, we have $U=V$ and the correspondence changes as described above.

However, we have also seen that $V < U$ in the set $\{e^m-e_m \not= 0\}$ and hence we obtain a (or multiple) rate-independent hysteresis loop for $u_{\rm vis}$ and $u_{\rm mm}$. Note that there are to different effects that contribute to the hysteresis loop, first due to the term $\partial R$ we obtain hysteresis when we change loading. Second, due to the non-monotone $e$, we obtain a further hysteresis loop. To highlight this, we refer to Figure \ref{fig:hysteresis} where the images of the curves $t \mapsto (f(t), u_{\rm vis}(t))$ and $(f(t), u_{\rm mm}(t))$ are plotted for $e : \R \to \R$ given by $e(x) \coloneqq x^3 - \frac{9}{2} x^2 + \frac{11}{2} x$ and  $f : [0, 16] \to \R$ given by
\[
	f(t) = \left\{\begin{array}{ll}
	t &\text{if } t \le 4, \\
	8-t &\text{if } t \in (4, 10), \\
	t-12 &\text{if } t \ge 10.
	\end{array}\right.
\]
Note that $e$ achieves has a local maximum at $x_{\rm max} \coloneqq \frac{3}{2} - \frac{1}{2}\sqrt{\frac{5}{3}}$ with $$e(x_{\rm max}) = \frac{1}{36} (54+5\sqrt{15}) \approx 2.038$$ and a local minimum at $x_{\rm min} \coloneqq \frac{3}{2} + \frac{1}{2}\sqrt{\frac{5}{3}}$ with $$e(x_{\rm min}) = \frac{1}{36} (54-5\sqrt{15}) \approx 0.962$$
and hence, we expect $u_{\rm vis}$ to jump first when $f(t) = e(x_{\rm max}) + 1 \approx 3.038$ and second when $f(t) = e(x_{\rm min}) - 1 \approx -0.038$. Furthermore, we expect $u_{\rm mm}$ to have a first jump when $f(t) = e(x_{\rm min}) + 1 \approx 1.962$ and a second jump when $f(t) = e(x_{\rm max}) - 1 \approx 1.038$. 

\section*{Acknowledgement}
LC acknowledges support from the Luxembourg National Research Fund (FNR) (13502370).

\section*{Declaration of Interest}
Declarations of interest: none.

\appendix
\section{Proofs}\label{app:proof}

\subsection{Perron's method}
The next two lemmas turn out to be helpful while proving theorem \ref{thm:perron}.
\begin{lemma}\label{lem:sup_subsolutions}
	Let $\mathcal{F}$ be a non-empty set of subsolutions of \eqref{eq:pde} and set
	\[
	w(x, t) \coloneqq \sup_{u\in \mathcal{F}} u(x, t).
	\]
	If $w(x, t) < \infty$ then its upper-semicontinuous envelope $w^*$ is a subsolution of \eqref{eq:pde}. The analogous result holds for supersolutions.
\end{lemma}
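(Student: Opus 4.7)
The plan is to adapt the classical Perron argument for viscosity solutions, with the extra care that the set-valued coupling forces us to select jet-values at approximating maxima rather than use a single test function globally. Fix $(\hat{x}, \hat{t}) \in \Omega \times I$ and a super-jet element $(a, p, X) \in \mathcal{P}^{2,+} w^*(\hat{x}, \hat{t})$. First I would introduce the smooth test function
\[
\phi(x,t) := a(t-\hat{t}) + \langle p, x-\hat{x}\rangle + \langle X(x-\hat{x}), x-\hat{x}\rangle + |x-\hat{x}|^4 + (t-\hat{t})^2,
\]
so that the definition of the super-jet combined with the strict quartic/quadratic penalties makes $w^* - \phi$ attain a \emph{strict} local maximum at $(\hat{x}, \hat{t})$ with value $0$, while $\phi_t(\hat{x},\hat{t}) = a$, $\nabla\phi(\hat{x},\hat{t}) = p$, $D^2\phi(\hat{x},\hat{t}) = 2X$.

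Next, by the definition of the upper-semicontinuous envelope, there is a sequence $(x_n,t_n)\to(\hat{x},\hat{t})$ and functions $u_n\in\mathcal{F}$ with $u_n(x_n,t_n)\to w^*(\hat{x},\hat{t})$. On a small closed parabolic neighbourhood of $(\hat{x},\hat{t})$ the upper-semicontinuous function $u_n - \phi$ attains its maximum at a point $(y_n,s_n)$. The strict maximality of $(\hat{x},\hat{t})$ for $w^*-\phi$, together with $u_n \le w \le w^*$ and the control $u_n(x_n,t_n)-\phi(x_n,t_n)\to 0$, forces $(y_n,s_n)\to(\hat{x},\hat{t})$ and $u_n(y_n,s_n)\to w^*(\hat{x},\hat{t})$; in particular $(y_n,s_n)$ is an interior maximum for $n$ large. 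Consequently $(a_n,p_n,X_n) := (\phi_t(y_n,s_n), \nabla\phi(y_n,s_n), \tfrac{1}{2}D^2\phi(y_n,s_n)) \in \mathcal{P}^{2,+}u_n(y_n,s_n)$ and $(a_n,p_n,X_n) \to (a,p,X)$ by smoothness of $\phi$.

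Since each $u_n$ is a subsolution, there exists $\mu_n \in \mathcal{S}(a_n)$ with
\[
F_*(y_n,s_n,u_n(y_n,s_n),a_n,p_n,X_n) \le \mu_n \, G(y_n,s_n,u_n(y_n,s_n),p_n).
\]
Condition \ref{it:C_seq} then supplies a subsequence $\mu_{n_k}\to\mu\in\mathcal{S}(a)$. Passing to the limit using the lower semicontinuity of $F_*$ on the left and the continuity of $G$ on the right yields
\[
F_*(\hat{x},\hat{t},w^*(\hat{x},\hat{t}),a,p,X) \le \mu\, G(\hat{x},\hat{t},w^*(\hat{x},\hat{t}),p),
\]
which is exactly the subsolution condition \eqref{eq:def_subsol} for $w^*$. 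The statement for supersolutions follows via the duality $\mathcal{P}^{2,-}v = -\mathcal{P}^{2,+}(-v)$ together with the symmetric use of condition \ref{it:C_seq}.

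The main obstacle is genuinely the set-valued coupling: the selections $\mu_n\in\mathcal{S}(a_n)$ are not canonical, so a careless passage to the limit could land outside $\mathcal{S}(a)$. This is precisely what \ref{it:C_seq} is designed to rule out, and it is the only place in the argument where the novel hypotheses on $\mathcal{S}$ enter; everything else is the standard ``move the max to where the subsolutions live'' trick from the classical theory.
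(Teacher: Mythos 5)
Your overall strategy matches the paper exactly: produce nearby jet elements of the individual subsolutions, invoke \ref{it:C_seq} to extract a convergent subsequence $\mu_{n_k}\to\mu\in\mathcal{S}(a)$, and pass to the limit using lower semicontinuity of $F_*$ and continuity of $G$. You also correctly identify \ref{it:C_seq} as the only place where the set-valued structure intervenes. The paper, however, simply cites \cite[Proposition~4.3]{Crandall1992} for the ``shift the jet to a nearby point'' step, whereas you try to re-prove it inline, and this is where the gap lies.

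The flaw is in the claim that the penalties $|x-\hat x|^4 + (t-\hat t)^2$ make $w^*-\phi$ attain a \emph{strict} local maximum at $(\hat x,\hat t)$. The jet membership only gives
\[
w^*(x,t)-\phi(x,t)-w^*(\hat x,\hat t) \;\le\; \mathrm{o}\bigl(|t-\hat t|+|x-\hat x|^2\bigr) - |x-\hat x|^4 - (t-\hat t)^2,
\]
and this right-hand side is not in general negative on a punctured neighbourhood: the unknown $\mathrm{o}$-term can behave like, say, $|t-\hat t|^{3/2}$ or $|x-\hat x|^3$, which dominates $(t-\hat t)^2$ and $|x-\hat x|^4$ respectively near the origin. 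Without strict maximality the argument that $(y_n,s_n)\to(\hat x,\hat t)$ and in particular that $(y_n,s_n)$ is eventually interior breaks down, and the whole construction of the jet elements $(a_n,p_n,X_n)$ collapses. The repair is standard but non-trivial: one first uses the equivalence between jet membership and the existence of a $C^{1,2}$ test function $\psi$ (generically not polynomial, built from an explicit modulus $\sigma$ with $\sigma(s)/s\to 0$ absorbing the $\mathrm{o}$-error) touching $w^*$ from above with the prescribed derivatives, and only then adds a small polynomial penalty to $\psi$ to upgrade the touching to a strict maximum. This is precisely the content of \cite[Proposition~4.3]{Crandall1992}, which the paper invokes rather than re-deriving; replacing your polynomial $\phi$ by the reference to that proposition (or by the full construction just sketched) closes the gap.
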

\begin{proof}
	It is enough to prove the result for subsolution as each supersolution is a subsolution of $-F(x, t, u, u_t, \nabla u, D^2 u) \in -\mathcal{S}(a) G(x, t, u, \nabla u)$, i.e., by replacing $F$ with $-F$ (noting that the degenerate ellipticity assumption is not used in the proof) and $\mathcal{S}$ with $-\mathcal{S}$ we obtain the result for supersolutions.

	Let $(x, t) \in \Omega \times I$ and $(a, p, X) \in \mathcal{P}^{2, +}w^*(x, t)$. By definition of the upper-semicontinuous envelope one can find a sequence $(x_n, t_n, u_n) \in \Omega \times I \times \mathcal{F}$ such that $u_n(x_n, t_n) \to w^*(x, t)$ and for any other sequence $(x'_n, t'_n) \to (x, t)$ we have $\limsup_{n\to \infty} u_n(x'_n, t'_n) \le w^*(x, t)$. This implies that there is a sequence $(\hat x_n, \hat t_n) \in \Omega \times I$ and a sequence $(a_n, p_n, X_n) \in \mathcal{P}^{2, +} u_n(\hat x_n,\hat t_n)$ such that
	\[
	(\hat x_n, \hat t_n, u_n(\hat x_n, \hat t_n), a_n, p_n, X_n) \to (x, t, w^*(x, t), a, p, X),
	\]
	see \cite[Proposition~4.3]{Crandall1992}. As $u_n \in \mathcal{F}$ is a subsolution there is a $\mu_n \in \mathcal{S}(a_n)$ such that
	\[
	F_*(\hat x_n, \hat t_n, u(\hat x_n, \hat t_n), a_n, p_n, X_n) \le \mu_n G(\hat x_n, \hat t_n, u(\hat x_n, \hat t_n), p_n).
	\]
	Passing to a subsequence, by \ref{it:C_seq} we have $\mu_{n_k} \to \mu \in \mathcal{S}(a)$ and the last inequality still holds for this subsequence. Due to the lower semicontinuity of $F_*$ and the continuity of $G$, we can pass to limit inferior as $k\to \infty$ and we obtain
	\[
	F_*(x,t, w^*(x, t), a, p, X) \le \mu G(x, t, w^*(x, t), p).
	\]
	We proved that for all $(x, t) \in \Omega \times I$ and all $(a, p, X) \in \mathcal{P}^{2, +} w^*(x, t)$ there is a $\mu \in \mathcal{S}(a)$ such that the last inequality holds. This shows that $w^*$ is a subsolution of \eqref{eq:pde}.
\end{proof}

\begin{lemma}\label{lem:bump_const}
	Let $u$ be a subsolution of \eqref{eq:pde}. Assume that $u_*$ is not a supersolution at some point $(\hat x, \hat t)$, i.e., there exists $(a, p, X) \in \mathcal{P}^{2, -} u_*(\hat x, \hat t)$ such that for all $\mu \in \mathcal{S}(a)$ we have
	\begin{equation}
	F^*(\hat x, \hat t, u_*(\hat x, \hat t), a, p, X) < \mu G(\hat x, \hat t, u_*(\hat x, \hat t), p). \label{eq:strict_ineq}
	\end{equation}
	In this case, for any $\epsilon > 0$ there exists a subsolution $u_\epsilon: \Omega \times I \to \R$ satisfying
	\begin{itemize}
		\item $u_\epsilon(x, t) \ge u(x, t)$,
		\item $\sup(u_\epsilon - u) > 0$,
		\item $u_\epsilon(x, t) = u(x, t)$ for all $(x, t)\in \Omega\times I$ with $|(x, t)-(\hat{x}, \hat{t})|\ge\epsilon$.
	\end{itemize}
	
	Again, the same result holds also for supersolutions with the obvious modifications.
\end{lemma}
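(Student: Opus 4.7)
The plan is to run the classical Perron bump construction, adapted to handle the set-valued term $\mathcal{S}$; the supersolution version follows by the standard sign change (replacing $F$ by $-F$ and $\mathcal{S}$ by $-\mathcal{S}$). Using \ref{it:C_compact} and the continuity of $G$, the strict inequality \eqref{eq:strict_ineq}, which holds for every $\mu\in\mathcal{S}(a)$, provides a uniform slack: there exists $\lambda_0>0$ with
\[
F^*(\hat x,\hat t,u_*(\hat x,\hat t),a,p,X) + 2\lambda_0 \le \mu G(\hat x,\hat t,u_*(\hat x,\hat t),p) \qquad \text{for all } \mu\in\mathcal{S}(a).
\]
With the jet test function $\phi_0(x,t) := u_*(\hat x,\hat t) + a(t-\hat t) + \langle p, x-\hat x\rangle + \langle X(x-\hat x), x-\hat x\rangle$, I introduce the perturbation
\[
\phi_{r,\sigma}(x,t) := \phi_0(x,t) + \sigma - r\bigl(|x-\hat x|^2 + |t-\hat t|\bigr),
\]
with $r,\sigma>0$ to be tuned.

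I then show that for $r$ small enough, $\phi_{r,\sigma}$ is a strict subsolution of \eqref{eq:pde} in a ball $B_\rho(\hat x,\hat t)$ with $\rho<\epsilon$. Any $(a',p',X')\in\mathcal{P}^{2,+}\phi_{r,\sigma}(x_0,t_0)$ satisfies $|a'-a|\le r$, $p'=p+2(X-rI)(x_0-\hat x)$, and $X'\ge X-rI$; degenerate ellipticity of $F_*$ lets me assume the worst case $X'=X-rI$. Combining upper semicontinuity of $F^*$ at $(\hat x,\hat t,u_*(\hat x,\hat t),a,p,X)$, continuity of $G$, and \ref{it:C_epsDelta} (which supplies $\mu_\star\in\mathcal{S}(a)$ and $\mu'\in\mathcal{S}(a')$ close to each other once $r$ is small), the slack $2\lambda_0$ is preserved up to a controllable error, giving $F_*(x_0,t_0,\phi_{r,\sigma}(x_0,t_0),a',p',X-rI) \le \mu' G(x_0,t_0,\phi_{r,\sigma}(x_0,t_0),p')-\lambda_0$ throughout $B_\rho(\hat x,\hat t)$.

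With $r$ fixed, I pick $\sigma>0$ small enough that the set $\{\phi_{r,\sigma}>u_*\}$ is compactly contained in some open ball $B_{\rho'}(\hat x,\hat t)$ with $\overline{B_{\rho'}}\subset B_\rho$. This uses $(a,p,X)\in\mathcal{P}^{2,-}u_*(\hat x,\hat t)$, which yields $u_*(x,t)\ge\phi_0(x,t)-\eta(|t-\hat t|+|x-\hat x|^2)$ in a small neighborhood for any $\eta>0$; choosing $\eta=r/2$ gives $u_*-\phi_{r,\sigma}\ge (r/2)(|x-\hat x|^2+|t-\hat t|)-\sigma$, non-negative outside a region of size $\mathcal{O}(\sigma/r)$. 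Define
\[
u_\epsilon(x,t) := \begin{cases} \max\{u(x,t),\phi_{r,\sigma}(x,t)\}, & (x,t)\in B_{\rho'}(\hat x,\hat t), \\ u(x,t), & \text{otherwise}. \end{cases}
\]
On $\partial B_{\rho'}$ we have $\phi_{r,\sigma}\le u_*\le u$, so the two definitions agree and $u_\epsilon$ is globally upper semicontinuous. Inside $B_{\rho'}$, $u_\epsilon$ is the sup of two subsolutions, hence a subsolution by Lemma~\ref{lem:sup_subsolutions}; outside $\overline{B_{\rho'}}$, $u_\epsilon=u$ is a subsolution; at $(x_0,t_0)\in\partial B_{\rho'}$, $u_\epsilon(x_0,t_0)=u(x_0,t_0)$ together with $u\le u_\epsilon$ gives $\mathcal{P}^{2,+}u_\epsilon(x_0,t_0)\subset\mathcal{P}^{2,+}u(x_0,t_0)$, so the inequality transfers from $u$. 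For the strict increase, pick $(x_n,t_n)\to(\hat x,\hat t)$ with $u(x_n,t_n)\to u_*(\hat x,\hat t)$; then $\phi_{r,\sigma}(x_n,t_n)\to u_*(\hat x,\hat t)+\sigma$, so $u_\epsilon(x_n,t_n)-u(x_n,t_n)\to\sigma>0$, and the localization $u_\epsilon=u$ outside $B_\epsilon$ is automatic from $\rho<\epsilon$.

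The main obstacle is verifying that $\phi_{r,\sigma}$ is a strict subsolution in $B_\rho$: the non-smoothness of $|t-\hat t|$ at $t=\hat t$ enlarges the super-jet in time to the interval $[a-r,a+r]$, and condition \ref{it:C_epsDelta} does not let me pick the approximating $\mu'\in\mathcal{S}(a')$ starting from a \emph{prescribed} $\mu\in\mathcal{S}(a)$. This forces me to exploit the fact that the slack $2\lambda_0$ is uniform over \emph{all} of $\mathcal{S}(a)$ (using compactness) so that whichever $\mu_\star\in\mathcal{S}(a)$ is produced by \ref{it:C_epsDelta} will still supply enough slack to absorb the continuity errors coming from $F^*$ and $G$.
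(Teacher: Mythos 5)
Your argument is correct and follows the same overall strategy as the paper's proof: extract a uniform slack from the strict inequality using compactness of $\mathcal{S}(a)$ (condition \ref{it:C_compact}), build a localized quadratic bump around $(\hat x,\hat t)$, verify it is a strict subsolution in a small ball by combining upper semicontinuity of $F^*$, continuity of $G$, and condition \ref{it:C_epsDelta}, then patch it onto $u$ via Lemma~\ref{lem:sup_subsolutions}. The one genuine difference is your choice of penalty. The paper subtracts $\tfrac{\gamma}{2}|(x,t)-(\hat x,\hat t)|^2$, a smooth quadratic in both space and time, whereas you subtract $r\bigl(|x-\hat x|^2+|t-\hat t|\bigr)$, which is linear in $t$. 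Your choice is in fact the one that matches the scaling of the parabolic sub-jet error $o(|t-\hat t|+|x-\hat x|^2)$: a penalty that is only quadratic in $t$ does not obviously dominate an error term that is allowed to be merely $o$-linear in $t$ on the annulus where the localization estimate is needed, and your $r|t-\hat t|$ term avoids that worry. The price, as you note, is a kink at $t=\hat t$ which enlarges the temporal super-jet to $[a-r,a+r]$. Your handling of this point is exactly right: since \ref{it:C_epsDelta} only gives you \emph{some} pair $(\mu^\star,\mu')$ close to each other rather than an approximant of a prescribed $\mu$, the fact that the slack $2\lambda_0$ holds uniformly over all of $\mathcal{S}(a)$ (courtesy of \ref{it:C_compact} and the strict inequality holding for every $\mu$) is essential, and you invoke it correctly. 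In short, the route is the same; your penalty is a small but sound technical variation, and your treatment of the resulting set-valued time derivative is complete.
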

\begin{proof}
	Let $(\hat{x}, \hat{t})$ and $(a, p, X) \in \mathcal{P}^{2, -} u_*(\hat{x}, \hat{t})$ be such that inequality \eqref{eq:strict_ineq} holds. By \ref{it:C_compact} the set $\mathcal{S}(a)$ is compact and there is $\alpha > 0$ such that
	\[
	F^*(\hat x, \hat t, u_*(\hat x, \hat t), a, p, X) - \mu G(\hat x, \hat t, u_*(\hat x, \hat t), p) \le -\alpha
	\]
	for all $\mu \in \mathcal{S}(a)$. Let us define
	\begin{equation}\label{eq:bump_const_proof_1}
	u_{\delta, \gamma}(x, t) \coloneqq u_*(\hat x, \hat t) + \delta + a(t-\hat{t})+ \left<p, x-\hat x\right>+ \tfrac{1}{2}\left<X(x-\hat x), x-\hat x\right> - \tfrac{\gamma}{2} |(x, t) - (\hat{x}, \hat{t})|^2.
	\end{equation}
	As $F^*$ is upper-semicontinuous, we have for $|(x, t) - (\hat{x}, \hat{t})| \to 0$,
	\begin{align*}
	&F^*( x,  t, u_{\delta, \gamma}( x, t), (u_{\delta, \gamma})_t( x, t), \nabla u_{\delta, \gamma}(x, t), D^2 u_{\delta,\gamma}(x, t)) &\\&= F^*( x,  t, u_{\delta, \gamma}( x, t), a - \gamma (t-\hat t), p - \gamma (x-\hat{x}), X - \gamma \operatorname{Id}) \\
	&\le F^*(\hat x, \hat t, u_*(\hat x, \hat t), a, p, X) + {\rm{o}}(1).
	\end{align*}
	The continuity of $G$ can be used in the same way to obtain 
	$$
	G( x,  t, u_{\delta, \gamma}( x, t), \nabla u_{\delta, \gamma}(x, t))= G(\hat x, \hat t, u_*(\hat x, \hat t), p) + {\rm{o}}(1).
	$$
	Moreover, $\partial_t u_{\delta, \gamma}( x, t) = a -\gamma (t-\hat{t})$ which implies with condition \ref{it:C_epsDelta} that there are $\mu_\gamma \in \mathcal{S}(\partial_t u_{\delta, \gamma}( x, t))$ with $\gamma$ small enough such that $\mu_\gamma = \mu + o(1)$. As \eqref{eq:bump_const_proof_1} holds, we conclude that if $\delta, \gamma, r$ are small enough then is $u_{\delta, \gamma}$ a subsolution of \eqref{eq:pde} in $B_r(\hat{x}, \hat{t})$. Moreover, since
	\[
	u(x, t) \ge u_*(x, t) \ge u_*(\hat{x}, \hat{t})+ a(t-\hat t)+ \left<p, x-\hat x\right>+ \tfrac{1}{2}\left<X(x-\hat x), x-\hat x\right> + {\rm{o}}(|(x, t)-(\hat x, \hat t)|^2),
	\]
	we can choose $\delta = c(\gamma, r)$ to obtain $u(x, t) > u_{\delta, \gamma}(x, t)$ for $(x, t) \in B_r(\hat{x}, \hat{t}) \setminus B_{\frac{r}{2}}(\hat{x}, \hat{t})$. Therefore the function
	\[
	u_{\gamma}(x, t) \coloneqq \left\{\begin{array}{ll}
	\max\{ u(x, t), u_{\delta, \gamma}(x, t) \} & \text{ in } B_r(\hat{x}, 	\hat{t}), \\
	u(x, t) & \text{ elsewhere.}
	\end{array}\right.
	\]
	is a subsolution by Lemma \ref{lem:sup_subsolutions}. It is clear that $u_\gamma(x, t) \ge u(x, t)$ and that in a neighborhood of $(\hat{x}, \hat{t})$ we have $u_\gamma(x, t) > u(x, t)$. For $\epsilon$ given, by choosing $r, \gamma < \epsilon$ we have that $u_{\gamma}^*$ satisfies all the required properties.
\end{proof}

\begin{proof}[Proof of Theorem \ref{thm:perron}]\label{proof:perron}
	As $U \le v \le \infty$, Lemma \ref{lem:sup_subsolutions} implies that $U^*$ is a subsolution. Now assume that $(U^*)_* = U_*$ is not a supersolution, then there has to be a neighborhood where $U_* < v$. In this neighborhood we can apply Lemma \ref{lem:bump_const} to obtain subsolutions $u_\epsilon$ that are strictly bigger then $U_*$. Moreover, as $\epsilon \to 0$ these subsolutions $u_\epsilon$ converge to $U_*$. Hence, we can choose $\epsilon$ small enough so that $U_* \le u_\epsilon < v$. This contradicts the maximality of $U$. We showed that $U_*$ is a supersolution and $U^*$ is a subsolution, i.e., $U$ is indeed a discontinuous viscosity solution.

	The same arguments also prove that $V$ is a discontinuous viscosity solution. To see the last statement in this Theorem, we note that $V^*$ is a subsolution and $U_*$ is a supersolution with $V^* \le v^* = v$ and $u = u_* \le U_*$.
\end{proof}

\subsection{Comparision Principles}
The proofs of the comparision principles, both bounded and unbouned, in this section are primarily an adaption of the methods in \cite{Crandall1992}.
\begin{proof}[Proof of Theorem \ref{thm:comp}]\label{proof:comp}
	Assume by contradiction that comparison does not hold, i.e.
	\[
	\sup_{\substack{x\in \Omega \\ t\in I}} \left\{ u(x, t) - v(x, t)\right\} \eqqcolon \delta > 0
	\]
	and define
	\[
	M_{\alpha, \gamma} \coloneqq \sup_{\substack{x, y \in \Omega \\ t\in I}} \left\{ u(x, t) - v(y, t) - \alpha|x-y|^4 - \tfrac{\gamma}{T-t}\right\}.
	\]
	We have $M_{\alpha, \gamma} > \delta/2$ for $\gamma$ small enough.
	Since the domain is bounded, the supremum is achieved at a point $(\hat{x}, \hat{y}, \hat{t}) \in \overline{\Omega}\times \overline{\Omega} \times [0,T)$.
	
	We will now show that the triplet $(\hat{x}, \hat{y}, \hat{t})$ is in the interior of the parabolic domain if $\alpha$ is large enough. Assume first that $\hat t = 0$ then
	\[
	M_{\alpha, \gamma } = u(\hat{x}, 0) - v(\hat{y}, 0) - \alpha|\hat x-\hat y|^4 - \tfrac{\gamma}{T-\hat t} \le  - \alpha|\hat x-\hat y|^4 - \tfrac{\gamma}{T-\hat t} \le 0,
	\]
	since $u\le v$ on the parabolic boundary $\partial_P (\Omega \times I)$, contradicting $M_{\alpha, \gamma} > \delta/2$. We now check that, if $\alpha$ is chosen to be large enough, $\hat{x}$ and $\hat{y}$ necessarily belong to $\Omega$. Assume the contrary, namely there exists a subsequence $\alpha_n \to \infty$ with $\hat{x}_n \in \partial \Omega$ realizing the sup. Then, we also have $\hat{y}_n \to \hat{y}_\infty \in \partial \Omega$ and therefore
	\[
	\lim_{n\to \infty}M_{\alpha_n, \gamma } = \lim_{n\to \infty}\left(u(\hat{x}_n, \hat{t}) - v(\hat{y}_n, \hat{t}) - \alpha_n|\hat x_n-\hat y_n|^4 - \tfrac{\gamma}{T-\hat t}\right) \le 0 - \tfrac{\gamma}{T} \le 0,
	\]
	where we used again that $u \le v$ on $\partial_P(\Omega\times I)$ and reached a contradiction. Therefore, we have proved that $(\hat{x}, \hat{y}, \hat{t}) \in \Omega \times \Omega \times (0, T)$, at least if $\alpha$ is large enough.  Hence, we have \cite[Theorem 8.3]{Crandall1992}
	\[
	(a, p, X) \in \mathcal{P}^{2, +} u(\hat{x}, \hat{t}) \text{ and }
	(b, p, Y) \in \mathcal{P}^{2, -} v(\hat{y}, \hat{t}) 
	\]
	with $a-b = \tfrac{\gamma}{(T-\hat{t})^2}$, $p \coloneqq 4\alpha |\hat x-\hat y|^2 (\hat x-\hat y)$, and
	\[
	-4||Z|| \left(\begin{array}{cc}\operatorname{Id} & 0 \\ 0 & \operatorname{Id}\end{array}\right) \le \left(\begin{array}{cc}X & 0 \\ 0 & -Y\end{array}\right)\le \left(\begin{array}{cc}Z+\frac{1}{2||Z||}Z^2 & -(Z+\frac{1}{2||Z||}Z^2) \\ -(Z+\frac{1}{2||Z||}Z^2 ) & Z+\frac{1}{2||Z||}Z^2 \end{array}\right),\]
	with $Z \coloneqq 4 \alpha |\hat x-\hat y|^2 \operatorname{Id} + 8\alpha (\hat x-\hat y) \otimes (\hat x-\hat y)$. This means that condition \ref{it:F_modulus} and condition \ref{it:G_modulus} can be used. As $u$ is a subsolution and $v$ is a supersolution, we can find $\mu \in \mathcal{S}(a)$ and $\nu \in \mathcal{S}(b)$ such that
	\begin{align}
	F_*(\hat x,\hat t, u, a, p, X) - \mu G(\hat x,\hat  t, u, p) \le 0,\label{eq:sub} \\
	F^*(\hat y,\hat t, v, b, p, Y) - \nu G(\hat y,\hat  t, v, p) \ge 0,\label{eq:super}
	\end{align}
	where one of the inequalities holds even if we replace $0$ by $\mp\lambda$ with $\lambda > 0$.
	By subtracting \eqref{eq:sub} from \eqref{eq:super}, we obtain
	\begin{align*}
	\lambda \le&~ F^*(\hat y,\hat t, v, b, p, Y) - F_*(\hat x,\hat t, u, a, p, X)- \nu G(\hat y,\hat  t, v, p) + \mu G(\hat x,\hat  t, u, p).
	\end{align*}
	Adding and subtracting terms, we get
	\begin{align*}
	\lambda \le&~ F^*(\hat y,\hat t, v, b, p, Y) - F_*(\hat x,\hat t, v, a, p, X) \\
	&+ F_*(\hat x,\hat t, v, a, p, X)- F_*(\hat x,\hat t, u, a, p, X) \\
	&- \nu G(\hat y,\hat  t, v, p)+ \nu G(\hat x,\hat  y, v, p) \\
	&- \nu G(\hat x,\hat  t, v, p)+ \nu G(\hat x,\hat  t, u, p) 
	\\&- \nu G(\hat x,\hat  t, u, p) + \mu G(\hat x,\hat  t, u, p) \\
	\le&~ \omega_F(|\hat x-\hat y| + \alpha |\hat x-\hat y|^4) + \mathcal{S}_{\mathrm{max}}L_G(v-u) \\
	&+|\nu| \omega_G(|\hat x-\hat y|+ \alpha |\hat x-\hat y|^4) 
	+ \mathcal{S}_{\mathrm{max}} L_G |u-v| \\
	&+ (\mu - \nu) G(\hat x,\hat  t, u, p).
	\end{align*}
	Where the second inequality follows from \ref{it:F_inc}, \ref{it:F_modulus},   and \ref{it:G_Lipschitz}, \ref{it:G_modulus} by noticing that, $X \le Y$, and $a > b$. In particular, \ref{it:S_mon} implies that $\mu-\nu \le 0$. As $G \ge 0$, it follows that the last term above is negative. Eventually, by means of $u(\hat x, \hat t) > v(\hat y, \hat t)$ we get
	\begin{align*}
	\lambda \le&~\omega_F(|\hat x-\hat y|+ \alpha |\hat x-\hat y|^4)+\mathcal{S}_{\mathrm{max}}\omega_G(|\hat x-\hat y|+ \alpha |\hat x-\hat y|^4).
	\end{align*}
	By taking $\alpha \to \infty$, we have $\alpha |\hat x-\hat y|^4 \to 0$ (see \cite{Crandall1992}) which also implies $|\hat x-\hat y| \to 0$. Therefore, the right-hand side above goes to $0$, leading to a contradiction.
\end{proof}

\begin{proof}[Proof of Theorem \ref{thm:comp_rn}]\label{proof:comp_rn}
	We are going to subdivide the proof in two steps. First, we are going to prove that the difference $u-v$ satisfies a growth estimate and afterwards we will prove the comparison principle.
	
	\emph{Step 1: Growth estimate}
	
	The first step of the proof consists in proving that the difference $u-v$ satisfies the growth estimate 
	\begin{equation}
	\sup_{(x, y, t) \in \Rn \times \Rn \times I} u(x, t) - v(y, t) - 2\eta^{-1}K|x-y| - \tfrac{\gamma}{T-t} < \infty, \label{eq:comp_rn_growth}
	\end{equation}
	where $K \coloneqq K_{F} + \mathcal{S}_{\textrm{max}} K_G$. Following \cite[Theorem~5.1.]{Crandall1992}, we choose a family $\beta_R$ of $C^2(\Rn)$ functions such that
	\begin{enumerate}[label=$\roman*)$]
		\item $\beta_R \ge 0$,
		\item $\liminf_{|x| \to \infty} \frac{\beta_R(x)}{|x|} \ge 2L$,
		\item $|D\beta_R(x)| + |D^2\beta_R(x)| \le C$, for $R \ge 1$, $x\in \Rn$,
		\item $\lim_{R\to \infty} \beta_R(x) = 0$ for $x\in \Rn$,
	\end{enumerate}
	where $C > 0$ is a positive constant. Let us now define the function
	\[
	\Phi(x, y, t) \coloneqq u(x, t) - v(y, t) - 2\eta^{-1}K(1+|x-y|^2)^{\frac{1}{2}} - \left( \beta_R(x) + \beta_R(y)\right) - \tfrac{\gamma}{T-t}.
	\]
	Note that condition $ii)$ implies that there is a constant $r(R)$ such that $\beta_R(x) \ge \frac{3}{2}L|x|$ if $|x| > r(R)$. Moreover by \eqref{eq:comp_rn_assumption}, we obtain for $|x|, |y| > r(R)$ the estimate
	\begin{align*}
	\Phi(x, y, t) &\le L(1+|x|+|y|) - 2\eta^{-1}K - \tfrac{3}{2}L|x| - \tfrac{3}{2}L|y| - \tfrac{\gamma}{T-t}\\ &= L - 2\eta^{-1}K - \tfrac{\gamma}{T-t} - \tfrac{1}{2}L(|x|+|y|).
	\end{align*}
	Hence, the function has to attain its supremum in a compact subset of $\Rn \times \Rn \times I$.  Let $(\hat{x}, \hat{y}, \hat{t})$ be this maximum. If the asserted inequality \eqref{eq:comp_rn_growth} fails to hold, then $\Phi(\hat{x}, \hat{y}, \hat{t}) > 0$ for $R>0$ big enough. Secondly assume the other case, i.e. $\Phi(\hat{x}, \hat{y}, \hat{t}) > 0$, which implies that
	\begin{equation}
	2\eta^{-1}K|\hat{x}-\hat{y}| \le u(\hat{x}, \hat{t}) - v(\hat{y}, \hat{t}) - \tfrac{\gamma}{T-\hat{t}}. \label{eq:comp_rn_bound}
	\end{equation}
	In case $\hat{t} = 0$ then we would get
	\[
	0 < \Phi(\hat{x}, \hat{y}, 0) \le - 2K(1+|\hat{x}-\hat{y}|^2)^{\frac{1}{2}} - \left( \beta_R(\hat{x}) + \beta_R(\hat{y})\right) + \tfrac{\gamma}{T} \le 0,
	\]
	which is a contradiction. Hence, the maximum $(\hat{x}, \hat{y}, \hat{t})$ lies inside $\Rn \times \Rn \times (0, T)$, yielding
	\begin{align*}
	(a, p + D\beta_R(\hat{x}), X + D^2\beta_R(\hat{x})) &\in \mathcal{P}^{2, +} u(\hat{x}, \hat{t}),\\
	(b, p - D\beta_R(\hat{y}), -X - D^2\beta_R(\hat{y})) &\in \mathcal{P}^{2, -} v(\hat{y}, \hat{t}),
	\end{align*}
	with $a = b+ \frac{\gamma}{(T-\hat{t})^2}$, $p = 2\eta^{-1}K \frac{\hat{x}-\hat{y}}{1+ |\hat{x}-\hat{y}|^2}$, and $$X = \frac{2\eta^{-1}K}{1+|\hat{x}-\hat{y}|^2} \operatorname{Id} - 4\eta^{-1}K \frac{\hat{x}-\hat{y}}{1+|\hat{x}-\hat{y}|^2} \otimes \frac{\hat{x}-\hat{y}}{1+|\hat{x}-\hat{y}|^2}.$$
	This implies that one can find $\mu \in \mathcal{S}(a)$, and $\nu \in \mathcal{S}(b)$ such that
	\begin{align}
	F_*(\hat{x}, \hat{t}, u, a, p + D\beta_R(\hat{x}), X + D^2\beta_R(\hat{x})) &\le \mu G(\hat{x}, \hat{t}, u, p + D\beta_R(\hat{x})) \label{eq:comp_rn_subsol}, \\
	F^*(\hat{y}, \hat{t}, v, b, p - D\beta_R(\hat{y}), -X - D^2\beta_R(\hat{y})) &\ge \nu G(\hat{y}, \hat{t}, v, p - D\beta_R(\hat{y})) \label{eq:comp_rn_supersol}.
	\end{align}
	Subtracting \eqref{eq:comp_rn_subsol} from \eqref{eq:comp_rn_supersol} shows that 
	\begin{align*}
	0 \le&~F^*(\hat y,\hat t, v, b, p - D\beta_R(\hat{y}), -X - D^2\beta_R(\hat{y}))
	- F_*(\hat x,\hat t, u, a, p + D\beta_R(\hat{x}), X + D^2\beta_R(\hat{x})) \\
	&- \nu\left( G(\hat y,\hat  t, v, p - D\beta_R(\hat{y}))
	- G(\hat x,\hat  t, u, p + D\beta_R(\hat{x}))\right) 
	+ (\mu - \nu) G(\hat x,\hat  t, u, p + D\beta_R(\hat{x})).
	\end{align*}
	As $b \le a$, we have $\mu \le \nu$ and we can estimate the last term in the right-hand side above by $0$. To treat other terms we use condition \ref{it:F_inc}, \ref{it:F_growth}, \ref{it:G_Lipschitz}, and \ref{it:G_growth}. As the difference of $F_2^* - {F_2}_*$ is locally bounded and $G_2$ is continuous there is $C(p, X, D\beta_R, D^2\beta_R) > 0$ locally bounded such that
	\begin{align*}
	0 \le&~ \eta (v(\hat y, \hat t) - u(\hat x, \hat t)) + C_{F_1} + \mathcal{S}_{\textrm{max}}C_{G_1} + C(p, X, D\beta_R, D^2\beta_R) \\
	&+ (K_F + \mathcal{S}_{\textrm{max}}K_G) |\hat x - \hat y|.
	\end{align*}
	As $p, X$ are bounded and $D\beta_R$, $D^2 \beta_R$ are bounded independently of $R$ we can introduce a constant $C > 0$ which is independent of $R$ and obtain
	\[
	0 \le C + \eta (v(\hat y, \hat t) - u(\hat x, \hat t))  + K |\hat x - \hat y|.
	\]
	Finally, using \eqref{eq:comp_rn_bound}, we see that $u(\hat x, \hat t) - v(\hat y, \hat t)$ is uniformly bounded, i.e., 
	\[
	u(\hat x, \hat t) - v(\hat y, \hat t) \le \tfrac{2C}{\eta}.
	\]
	The bound on $u-v$ implies that
	\[
	\Phi(x, y, t) \le \Phi(\hat{x}, \hat{y}, \hat{t}) \le u(\hat{x}, \hat{t}) - v(\hat{y}, \hat{t}) \le \tfrac{2C}{\eta}.
	\]
	By sending $R \to \infty$ we obtain
	\[
	u(x, t) - v(y, t) - 2K(1+|x-y|^2)^{\frac{1}{2}} - \tfrac{\gamma}{T-t} \le \tfrac{2C}{\eta}
	\]
	and \eqref{eq:comp_rn_growth} is proved.
	
	\emph{Step 2: Comparison Principle}
	
	It is now time to prove the comparison principle. Let us therefore assume that
	\[
	\sup_{\substack{x\in \Rn \\ t\in [0, T)}} \left\{ u(x, t) - v(x, t)\right\} \eqqcolon \delta > 0
	\]
	and define
	\[
	M_{\alpha, \epsilon, \gamma } \coloneqq \sup_{\substack{x, y \in \Rn \\ t\in I}} \left\{ u(x, t) - v(y, t) - \alpha|x-y|^4 - \epsilon(|x|^2+|y|^2) - \tfrac{\gamma}{T-t}\right\}.
	\]
	Due to the growth estimate \eqref{eq:comp_rn_growth}, $M_{\alpha, \epsilon, \gamma}$ is uniformly bounded. Moreover, we have $M_{\alpha, \epsilon, \gamma} > \delta/2$ for $\gamma, \epsilon$ small enough. Using \eqref{eq:comp_rn_growth}, we see that $M_{\alpha, \epsilon, \gamma}$ is attained at some $(\hat{x}, \hat{y}, \hat{t})$ satisfying
	\begin{align} \label{eq:comp_rn_estimate}
	\alpha|\hat x-\hat y|^4 + \epsilon(|\hat x|^2+|\hat y|^2) &\le u(\hat x, \hat t) - v(\hat y, \hat t) - \tfrac{\gamma}{T-\hat t}
	\le 2K\eta^{-1}|\hat{x} - \hat{y}| + C \\
	\nonumber
	&\le \tfrac{\alpha}{4} |\hat{x}-\hat{y}|^4 + \tfrac{3}{4}(\tfrac{2K\eta^{-1}}{\alpha^{1/4}})^{4/3}  + C,
	\end{align}
	for some constant $C = C(K, C_{F_1}, C_{F_2}) > 0$. Hence, the maximum is achieved inside of the domain, i.e. $(\hat{x}, \hat{y}, \hat{t}) \in \Rn \times \Rn \times (0, T)$ and we can again apply the Jensen-Ishii lemma to obtain that
	\begin{align*}
	(a, p+2\epsilon \hat{x}, X + 2\epsilon\operatorname{Id}) &\in \mathcal{P}^{2, +} u(\hat{x}, \hat{t}), \\
	(b, p-2\epsilon \hat{y}, Y-2\epsilon\operatorname{Id}) &\in \mathcal{P}^{2, -} v(\hat{y}, \hat{t}) ,
	\end{align*}
	with $a-b = \tfrac{\gamma}{(T-\hat{t})^2}$, $p \coloneqq 4\alpha |\hat x-\hat y|^2 (\hat x-\hat y)$, and
	\[
	-4||Z|| \left(\begin{array}{cc}\operatorname{Id} & 0 \\ 0 & \operatorname{Id}\end{array}\right) \le \left(\begin{array}{cc}X & 0 \\ 0 & Y\end{array}\right)\le \left(\begin{array}{cc}Z+\frac{1}{2||Z||}Z^2 & -(Z+\frac{1}{2||Z||}Z^2) \\ -(Z+\frac{1}{2||Z||}Z^2 ) & Z+\frac{1}{2||Z||}Z^2 \end{array}\right),\]
	with $Z \coloneqq 4 \alpha |\hat x-\hat y|^2 \operatorname{Id} + 8\alpha (\hat x-\hat y) \otimes (\hat x-\hat y)$. As $u$ is a subsolution and $v$ is a supersolution, we can find $\mu \in \mathcal{S}(a)$ and $\nu \in \mathcal{S}(b)$ such that
	\begin{align}
	F_*(\hat x,\hat t, u, a, p+2\epsilon\hat x, X+ 2\epsilon\operatorname{Id}) - \mu G(\hat x,\hat  t, u, p + 2\epsilon\hat x) \le 0,\label{eq:comp_rn_proof_sub} \\
	F^*(\hat y,\hat t, v, b, p - 2\epsilon\hat y, Y- 2\epsilon\operatorname{Id}) - \nu G(\hat y,\hat  t, v, p- 2\epsilon\hat y) \ge 0,\label{eq:comp_rn_proof_super}.
	\end{align}
	By subtracting \eqref{eq:comp_rn_proof_sub} from \eqref{eq:comp_rn_proof_super}, we obtain
	\begin{align*}
	0 \le&~ F^*(\hat y,\hat t, v, b, p - 2\epsilon \hat y, Y- 2\epsilon\operatorname{Id}) - F_*(\hat x,\hat t, u, a, p + 2\epsilon \hat x, X + 2\epsilon\operatorname{Id}) \\
	&- \nu G(\hat y,\hat t, v, p - 2\epsilon \hat{y}) + \mu G(\hat x,\hat  t,u, p + 2\epsilon \hat{x}).
	\end{align*}
	By adding and subtracting terms one has that
	\begin{align*}
	0 \le&~ F^*(\hat y,\hat t, v, b, p- 2\epsilon \hat y, Y -2\epsilon\operatorname{Id}) - F_*(\hat x,\hat t, v, a, p + 2\epsilon \hat x, X + 2\epsilon\operatorname{Id}) \\
	&+ F_*(\hat x,\hat t, v, a, p + 2\epsilon \hat x, X + 2\epsilon\operatorname{Id})- F_*(\hat x,\hat t, u, a, p + 2\epsilon \hat x, X + 2\epsilon\operatorname{Id}) \\
	&- \nu G(\hat y,\hat  t, v, p- 2\epsilon \hat y)+ \nu G(\hat x,\hat  t, v, p+ 2\epsilon \hat x) \\
	&- \nu G(\hat x,\hat  t, v, p+ 2\epsilon \hat x)+ \nu G(\hat x,\hat  t, u, p+ 2\epsilon \hat x) \\
	&- \nu G(\hat x,\hat  t, u, p+ 2\epsilon \hat x) + \mu G(\hat x,\hat  t, u, p+ 2\epsilon \hat x)\\
	\le&~\eta(v(\hat y, \hat t) - u(\hat x, \hat t)) \\
	&+ F^*(\hat y,\hat t, v, b, p- 2\epsilon \hat y, Y -2\epsilon\operatorname{Id}) - F_*(\hat x,\hat t, v, a, p + 2\epsilon \hat x, X + 2\epsilon\operatorname{Id}) \\
	&- \nu G(\hat y,\hat  t, v, p- 2\epsilon \hat y)+ \nu G(\hat x,\hat  t, v, p+ 2\epsilon \hat x).
	\end{align*}
	In the second inequality we used \ref{it:F_inc}, \ref{it:G_Lipschitz}, \ref{it:S_mon}, along with $u(\hat x, \hat t) > v(\hat y, \hat t)$, $X\le Y$, and $a\ge b$.
	
	To treat the terms in the last inequality, we use \ref{it:F_growth}, \ref{it:G_growth}, and \ref{it:S_bdd} to obtain,
	\begin{align*}
	\eta \tfrac{\delta}{2} \le&~ F_1(\hat y, \hat t, v, b) - F_1(\hat x, \hat t, v, a) +F_2^*(\hat t, p- 2\epsilon \hat y, Y -2\epsilon\operatorname{Id}) - {F_2}_*(\hat t, p + 2\epsilon \hat x, X + 2\epsilon\operatorname{Id}) \\
	&+\mathcal{S}_\textrm{max} | G_1(\hat y, \hat  t, v) - G_1(\hat x,\hat  t, v) | +\mathcal{S}_\textrm{max} | G_2(t, p + 2\epsilon \hat y) - G_2(t, p- 2\epsilon \hat y) | \\
	\le&~\omega_{F_1}(|\hat x - \hat y| + \tfrac{\gamma}{T-\hat t}) + F_2^*(\hat t, p- 2\epsilon \hat y, Y -2\epsilon\operatorname{Id}) - {F_2}_*(\hat t, p + 2\epsilon \hat x, X + 2\epsilon\operatorname{Id}) \\
	&~+\mathcal{S}_\textrm{max}\omega_{G_1}(|\hat x - \hat y|) +\mathcal{S}_\textrm{max} | G_2(t, p + 2\epsilon \hat y) - G_2(t, p- 2\epsilon \hat y) |
	\end{align*}
	As equation \eqref{eq:comp_rn_estimate} implies that $\alpha |\hat x - \hat y|^4$ is bounded independently of $\epsilon$, and therefore also $p, q, X$, and $Y$, we can take the limit superior as $\epsilon \to 0$ of the above equation to obtain the inequality
	\begin{align*}
	\eta \tfrac{\delta}{2} \le&~\omega_{F_1}(|\hat x - \hat y| + \tfrac{\gamma}{T-\hat t}) + F_2^*(\hat t, p, Y) - {F_2}_*(\hat t, p, X)+\mathcal{S}_\textrm{max}\omega_{G_1}(|\hat x - \hat y|).
	\end{align*}
	Note that we used the upper-semicontinuity of $F_2^* -{F_2}_*$ and the continuity of $G$. Finally, we can again use \ref{it:F_growth} to reach a contradiction as follows
	\begin{align*}
	\eta \tfrac{\delta}{2} &\le\liminf_{\alpha \to \infty}\liminf_{\gamma \to 0}\omega_{F_1}(|\hat x - \hat y| + \tfrac{\gamma}{T-\hat t}) +\omega_{F_2}(|\hat x - \hat y| + \alpha |\hat x - \hat y|^4)+\mathcal{S}_\textrm{max}\omega_{G_1}(|\hat x - \hat y|)\\ &= 0,
	\end{align*}
	if $\liminf_{\gamma \to 0} \liminf_{\epsilon \to 0} \tfrac{\gamma}{T-\hat{t}} = 0$ and $\liminf_{\alpha\to \infty}\liminf_{\gamma \to 0} \liminf_{\epsilon \to 0} \alpha |\hat x - \hat y|^4 = 0$. These equalities hold with a similiar argument as in \cite{Giga1991}.
\end{proof}

\subsection{Stability Result}
The proof is based on \cite[Chapter~6]{Barles2013} with the necessary adaptions.

\begin{proof}[Proof of Theorem \ref{thm:stability}]\label{proof:stability}
	Consider any $(x, t) \in \Omega \times I$ and $(a, p, X) \in \mathcal{P}^{2, +} \overline{u}(x, t)$. By applying Lemma 6.1 from \cite{Barles2013} (see also \cite[Proposition~4.3]{Crandall1992}), we see that there is a sequence $(x_{n_k}, t_{n_k}) \in \Omega \times I$ and $(a_{n_k}, p_{n_k},X_{n_k}) \in \mathcal{P}^{2, +} u_{n_k}(x_{n_k}, t_{n_k})$ such that
	\[
	(x_{n_k}, t_{n_k}, u_{n_k}(x_{n_k}, t_{n_k}), a_{n_k}, p_{n_k},X_{n_k}) \to (x, t, \overline{u}(x, t), a, p, X).
	\]
	As $u_{n_k}$ are viscosity subsolutions, there are $\mu_{n_k} \in \mathcal{S}_{n_k}(a_{n_k})$ such that
	\[
	F_{n_k}(x_{n_k}, t_{n_k}, u_{n_k}, a_{n_k}, p_{n_k}, X_{n_k}) - \mu_{n_k} G_{n_k}(x_{n_k}, t_{n_k}, u_{n_k}, p_{n_k}) \le 0.
	\]
	By the assumptions in the theorem, there is a further subsequence (not relabeled) such that $\mu_{n_k} \to \mu \in \mathcal{S}(a)$. Hence, we have by the definition of the half-relaxed limit $\underline{F}$ and the uniform convergence of $G_{n_k}$ that
	\begin{align*}
	&\underline{F}(x, t, u, a, p, X) - \mu G(x, t, u, p) \\
	&\le \liminf_{k\to \infty } \left(F_{n_k}(x_{n_k}, t_{n_k}, u_{n_k}, a_{n_k}, p_{n_k}, X_{n_k}) - \mu_{n_k} G_{n_k}(x_{n_k}, t_{n_k}, u_{n_k}, p_{n_k})\right) \\
	&\le 0.
	\end{align*}
\end{proof}

\bibliographystyle{plain}

\end{document}